\title[Converting Between Quadrilateral and Standard
    Solution Sets]{Converting Between Quadrilateral and Standard \\
    Solution Sets in Normal Surface Theory}
\author{Benjamin A Burton}
\address{School of Mathematics and Physics \\
    The University of Queensland \newline
    Brisbane QLD 4072 \\
    Australia}
\email{bab@debian.org}
\urladdr{}
\newtheorem{theorem}{Theorem}[section]
\newtheorem{lemma}[theorem]{Lemma}
\newtheorem{corollary}[theorem]{Corollary}
\newtheorem{algorithm}[theorem]{Algorithm}
\newtheorem{conjecture}[theorem]{Conjecture}
\newtheorem{example}[theorem]{Example}
\newtheorem{notn}[theorem]{Notation}
\theoremstyle{definition}
\newtheorem{defn}[theorem]{Definition}
\newcommand{\adm}[1]{\alpha(#1)}
\newcommand{\ainv}[1]{\mathscr{A}_{#1}}
\newcommand{\binv}[1]{\mathscr{B}_{#1}}
\newcommand{\canp}[2]{\kappa_v^{(#1)}(#2)}
\newcommand{\claima}{($\star$)}
\newcommand{\claimb}{($\dagger$)}
\newcommand{\claimc}{({\fontfamily{cmr}\textreferencemark})}
\newcommand{\enref}[1]{\ref*{#1}}
\newcommand{\fxrays}{{\em FXrays}}
\newcommand{\pref}[1]{\ref*{#1}}
\newcommand{\proj}[1]{\overline{#1}}
\newcommand{\qmap}[1]{\qmapsymbol(#1)}
\newcommand{\qmapsymbol}{\pi}
\newcommand{\qproj}[1]{\proj{\mathbf{q}}(#1)}
\newcommand{\qrep}[1]{\mathbf{q}(#1)}
\newcommand{\qspacea}{\R_a^{3n}}
\newcommand{\qspaceza}{\Z_a^{3n}}
\newcommand{\quadproj}{\mathscr{Q}(\tri)}
\newcommand{\regina}{{\em Regina}}
\newcommand{\stdproj}{\mathscr{S}(\tri)}
\newcommand{\surfaces}{\mathbb{S}}
\newcommand{\surfacescan}{\mathbb{S}_c}
\newcommand{\tri}{\mathcal{T}}
\newcommand{\vmap}[1]{\vmapsymbol(#1)}
\newcommand{\vmapsymbol}{\varepsilon}
\newcommand{\vproj}[1]{\proj{\mathbf{v}}(#1)}
\newcommand{\vrep}[1]{\mathbf{v}(#1)}
\newcommand{\vspacea}{\R_a^{7n}}
\newcommand{\vspaceac}{\R_{a,c}^{7n}}
\newcommand{\vspaceza}{\Z_a^{7n}}
\newcommand{\vspacezac}{\Z_{a,c}^{7n}}
\newlength{\claimwidth}
\newlength{\claimindent}
\newcommand{\displayclaim}[2][\claima]
    {\par\medskip\par
     \setlength{\claimwidth}{\linewidth}
     \addtolength{\claimwidth}{-\claimindent}
     \addtolength{\claimwidth}{-\claimindent}
     \noindent\hspace{\claimindent}\parbox[b]{\claimwidth}{\textbf{Claim:}
        \textit{#2}}\hfill#1
     \par\medskip\par}
\newlength{\roadmapwidth}
\newlength{\roadmapindent}
\newcommand{\displayroadmap}[1]
    {\par\medskip\par
     \setlength{\roadmapwidth}{\linewidth}
     \addtolength{\roadmapwidth}{-\roadmapindent}
     \addtolength{\roadmapwidth}{-\roadmapindent}
     \noindent\hspace{\roadmapindent}\parbox[b]{\roadmapwidth}{#1}\\}
\newcommand{\proofpart}[1]
    {\par\bigskip\par
     \centerline{\textbf{---\:#1\:---}}
     \nopagebreak\par\smallskip\par}
\begin{document}

\begin{abstract}
    The enumeration of normal surfaces is a crucial but very slow
    operation in algorithmic $3$--manifold topology.  At the heart of this
    operation is a polytope vertex enumeration in a high-dimensional
    space (standard coordinates).  Tollefson's Q--theory speeds up this
    operation by using a much smaller space (quadrilateral coordinates),
    at the cost of a reduced solution set that might not always be
    sufficient for our needs.
    In this paper we present algorithms for converting between solution
    sets in quadrilateral and standard coordinates.
    As a consequence we obtain a new algorithm for enumerating all
    standard vertex normal surfaces, yielding both the
    speed of quadrilateral coordinates and the wider applicability of
    standard coordinates.  Experimentation
    with the software package {\em Regina} shows this new algorithm
    to be extremely fast in practice, improving speed for large cases by
    factors from thousands up to millions.
\end{abstract}

\maketitle


\section{Introduction} \label{s-intro}

The theory of normal surfaces plays a pivotal role in algorithmic
$3$--manifold topology.  Introduced by Kneser \cite{kneser29-normal} and
further developed by Haken \cite{haken61-knot,haken62-homeomorphism},
normal surfaces feature in key topological algorithms such as
unknot recognition \cite{haken61-knot}, $3$--sphere recognition
\cite{rubinstein95-3sphere,rubinstein97-3sphere,thompson94-thinposition},
connected sum and JSJ decomposition \cite{jaco95-algorithms-decomposition},
and testing for incompressible surfaces \cite{jaco84-haken}.

The beauty of normal surface theory is that it allows difficult
topological questions to be transformed into straightforward linear
programming problems, yielding algorithms that are well-suited for computer
implementation.  Unfortunately these linear programming problems
can be extremely expensive computationally, which is what motivates the
work described here.

Algorithms that employ normal surface theory typically operate as
follows:
\begin{enumerate}[(i)]
    \item Begin with a compact $3$--manifold triangulation formed from $n$
    tetrahedra;
    \item Enumerate all vertex normal surfaces within this
    triangulation, as described below; \label{en-intro-enumerate}
    \item Search through this list for a surface of particular interest
    (such as an essential sphere for the connected sum decomposition
    algorithm, or an essential disc for the unknot recognition algorithm).
    \label{en-intro-interesting}
\end{enumerate}

The linear programming problem (and often the bottleneck for the entire
algorithm) appears in step~(\enref{en-intro-enumerate}).  It can be shown
that the set of all normal surfaces within a triangulation is described by a
polytope in a $7n$--dimensional vector space;
step~(\enref{en-intro-enumerate}) then requires us to enumerate the vertices of
this polytope.  The normal surfaces described by these vertices are
called {\em vertex normal surfaces}.

The trouble with step~(\enref{en-intro-enumerate})
is that the vertex enumeration
algorithm can grow exponentially slow in $n$; moreover, this growth is
unavoidable since the number of vertex normal surfaces can likewise grow
exponentially large.  As a result, normal surface algorithms are
(at the present time) unusable for large triangulations.

Nevertheless, it is important to have these algorithms working as well
as possible in practice.
One significant advance in this regard
was made by Tollefson \cite{tollefson98-quadspace}, who showed that in certain
cases, normal surface enumeration could be done in a much smaller vector space
of dimension $3n$.  This $3n$--dimensional space is called
{\em quadrilateral coordinates}, and the resulting vertex normal
surfaces (referred to by Tollefson as {\em Q--vertex surfaces})
form the {\em quadrilateral solution set}.  For comparison, we refer to
the original $7n$--dimensional space as {\em standard coordinates}
and its vertex normal surfaces as the {\em standard solution set}.
It is important to note that these solution sets are different
(in fact we prove in Lemma~\ref{l-quadisstd} that one is essentially a proper
subset of the other).

Practically speaking, quadrilateral coordinates are a significant
improvement---although the running time
remains exponential, experiments show that the enumeration of
normal surfaces in quadrilateral coordinates runs
orders of magnitude faster than in standard coordinates.

However, using quadrilateral coordinates can be problematic from a
theoretical point of view.  In the algorithm overview given earlier,
step~(\enref{en-intro-interesting}) requires us to prove that, if an
interesting surface exists, then it exists as a vertex normal surface.
Such results are more difficult to prove in quadrilateral coordinates,
largely because addition becomes a more complicated operation; in particular,
useful properties of surfaces that are linear functionals
in standard coordinates (such as as Euler characteristic) are no longer
linear in quadrilateral coordinates.  As a result, only a few
results appear in the literature to show that quadrilateral coordinates
can replace standard coordinates in certain topological algorithms.

The purpose of this paper is, in essence, to show that we can have our
cake and eat it too.  That is, we show that we can enumerate vertex
normal surfaces in {\em standard} coordinates (thereby avoiding the
theoretical problems of quadrilateral coordinates) by first constructing the
{\em quadrilateral} solution set and then converting this into the
standard solution set (thus avoiding the performance problems of
standard coordinates).  The conversion process is not trivial
(and indeed forms the bulk of this paper),
but it is found to be extremely fast in practice.

The key results in this paper are as follows:
\begin{itemize}
    \item Algorithm~\ref{a-stdtoquad}, which gives a simple procedure
    for converting the standard solution set into the quadrilateral
    solution set;
    \item Algorithm~\ref{a-quadtostd}, which gives a more
    complex procedure for converting the quadrilateral solution set into
    the standard solution set;
    \item Algorithm~\ref{a-enumstd}, which builds on these results
    to provide a new way of enumerating all vertex normal surfaces in
    standard coordinates, by going via quadrilateral coordinates
    as outlined above.
\end{itemize}

The final algorithm in this list (Algorithm~\ref{a-enumstd})
is the ``end product'' of this paper---it
can be dropped into any high-level topological algorithm that requires the
enumeration of vertex normal surfaces.  Experimentation shows that this
new algorithm runs orders of magnitude faster than the current
state-of-the-art, with consistent improvements of the order of
$10^3$--$10^6$ times the speed observed for large cases.
Full details can be found in Section~\ref{s-expt}.

The remainder of this paper is structured as follows.
Section~\ref{s-normal} introduces the theory of normal
surfaces, and defines the standard and quadrilateral solution sets
precisely.  In Section~\ref{s-canonical} we address the ambiguity
inherent in quadrilateral coordinates by studying canonical surfaces
and vectors.
Sections~\ref{s-stdtoquad} and~\ref{s-quadtostd} contain the
main results, where we describe
the conversion from standard to quadrilateral coordinates and
quadrilateral to standard coordinates respectively.  We finish in
Section~\ref{s-expt} with experimental testing that shows how well these new
algorithms perform in practice.

Because this paper introduces a fair amount of notation, an appendix is
included that lists the key symbols and where they are defined.

For researchers who wish to perform their own experiments, the three
algorithms listed above have been implemented in version~4.6 of
the software package {\regina} \cite{regina,burton04-regina}.

Thanks must go to Ryan Budney and the University of Victoria, British
Columbia for their hospitality during the development of this work,
and to both RMIT University and the University of Melbourne
for continuing to support the
development of {\regina}.  The author also thanks the Victorian
Partnership for Advanced Computing for the use of their excellent
computing resources.

\section{Normal Surfaces} \label{s-normal}

In this section we provide the essential definitions of normal surface
theory, including both Haken's original formulation (standard coordinates)
and Tollefson's normal surface Q--theory (quadrilateral coordinates).

We only present what is required to define the standard and quadrilateral
solution sets.  For a more thorough overview of normal surface theory
the reader is referred to \cite{hass99-knotnp}; for further details on
quadrilateral coordinates the reader is referred to Tollefson's original
paper \cite{tollefson98-quadspace}.

\begin{defn}[Triangulation] \label{d-tri}
    A {\em compact $3$--manifold triangulation} is a finite collection of
    tetrahedra $\Delta_1,\ldots,\Delta_n$, where some or all of the
    $4n$ tetrahedron faces are affinely identified in pairs, and where
    the resulting topological space is a compact $3$--manifold.

    We allow different vertices of the same tetrahedron to be identified, and
    likewise with edges and faces (some authors refer to such structures
    as {\em pseudo-triangulations} or {\em semi-simplicial triangulations}).
    Any tetrahedron face that is {\em not} identified with some other
    tetrahedron face becomes part of the boundary of this $3$--manifold,
    and is referred to as a {\em boundary face}.

    Each equivalence class of tetrahedron vertices under these
    identifications is called a {\em vertex of the triangulation};
    likewise with edges and faces.
\end{defn}

It should be noted that, according to this definition, the link of each
vertex in the underlying $3$--manifold must be a disc or a $2$--sphere.
This rules out the ideal triangulations of Thurston \cite{thurston78-lectures};
we discuss the reasons for this decision at the end of this section.

\begin{defn}[Normal Surface]
    Let $\tri$ be a compact $3$--manifold triangulation, and let
    $\Delta$ be a tetrahedron of $\tri$.  A {\em normal disc} in $\Delta$
    is a properly embedded disc in $\Delta$ which does not touch any
    vertices of $\Delta$, and whose boundary consists of either
    (i)~three arcs running across three different faces of $\Delta$, or
    (ii)~four arcs running across all four faces of $\Delta$.
    We refer to such discs as {\em triangles} and {\em quadrilaterals}
    respectively.

    There are seven different {\em types} of normal disc in a
    tetrahedron, defined by the choice of which tetrahedron edges a disc
    intersects.  These include (i)~four triangle types, each surrounding
    a single vertex of $\Delta$, and (ii)~three quadrilateral
    types, each separating a single pair of opposite edges of $\Delta$.
    All seven disc types are illustrated in Figure~\ref{fig-normaltypes}.

    \begin{figure}[htb]
    \centerline{\includegraphics[scale=0.5]{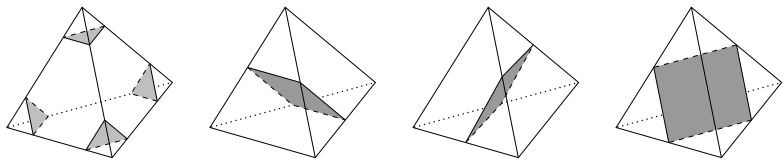}}
    \caption{The seven different types of normal disc in a tetrahedron}
    \label{fig-normaltypes}
    \end{figure}

    An {\em embedded normal surface} in the triangulation $\tri$
    is a properly embedded surface that intersects
    each tetrahedron of $\tri$ in a (possibly empty) collection of disjoint
    normal discs.  Here we allow both disconnected surfaces and the
    empty surface.

    We consider two normal surfaces {\em identical} if they are related by
    a {\em normal isotopy}, i.e., an ambient isotopy that
    preserves each simplex of $\tri$.
\end{defn}

We divert briefly to define a particular class of normal surface that plays
an important role in the relationship
between standard and quadrilateral coordinates.

\begin{defn}[Vertex Link] \label{d-link}
    Let $\tri$ be a compact $3$--manifold triangulation, and let
    $V$ be some vertex of $\tri$.  We define the {\em vertex link of $V$},
    denoted $\ell(V)$, to be the normal surface that appears at the
    frontier of a small regular neighbourhood of $V$.
    In particular, $\ell(V)$ contains one copy of each triangular disc type
    surrounding $V$, and contains no other normal discs at all.
\end{defn}

Here we follow the nomenclature of Jaco and Rubinstein
\cite{jaco03-0-efficiency};
in particular, Definition~\ref{d-link} is not the same as the
combinatorial link in a simplicial complex.  Tollefson refers to
vertex links as {\em trivial surfaces} \cite{tollefson98-quadspace}.

Note that Definition~\ref{d-tri} implies that $\ell(V)$ is a disc or a
$2$--sphere (according to whether or not $V$ is on the boundary of the
$3$--manifold).
In the case where $\tri$ is a one-vertex triangulation, the normal surface
$\ell(V)$ contains precisely one copy of every triangular disc type
in the triangulation, and no other normal discs.

At this point the theory of normal surfaces moves into linear algebra,
whereupon we must choose between the formulation of Haken (standard
coordinates) or Tollefson (quadrilateral coordinates).  In the text
that follows we outline both formulations side by side.

\begin{defn}[Vector Representations] \label{d-vecrep}
    Let $\tri$ be a compact $3$--manifold triangulation built from the $n$
    tetrahedra $\Delta_1,\ldots,\Delta_n$,
    and let $S$ be an embedded normal surface in $\tri$.

    Consider the individual normal discs that form the surface $S$.
    Let $t_{i,j}$ denote the number of triangular discs of the $j$th
    type in $\Delta_i$ ($j=1,2,3,4$), and let $q_{i,k}$ denote the number of
    quadrilateral discs of the $k$th type in $\Delta_i$ ($k=1,2,3$).

    Then the {\em standard vector representation} of $S$,
    denoted $\vrep{S}$, is the $7n$--dimensional vector
    \begin{alignat*}{2}
    \vrep{S}~=\,(~
        & t_{1,1},t_{1,2},t_{1,3},t_{1,4},\ q_{1,1},q_{1,2}&&,q_{1,3}\ ;\\
        & t_{2,1},t_{2,2},t_{2,3},t_{2,4},\ q_{2,1},q_{2,2}&&,q_{2,3}\ ;\\
        & \ldots &&,q_{n,3}\ ),
    \end{alignat*}
    and the {\em quadrilateral vector representation} of $S$, denoted
    $\qrep{S}$, is the $3n$--dimensional vector
    \[ \qrep{S}~=\,(~
        q_{1,1},q_{1,2},q_{1,3}\ ;\ q_{2,1},q_{2,2},q_{2,3}\ ;
        \ \ldots,q_{n,3}\ ). \]

    When we are working with standard vector representations in
    $\R^{7n}$ we say we are working in {\em standard coordinates}.
    Likewise, when working with quadrilateral vector representations in
    $\R^{3n}$ we say we are working in {\em quadrilateral coordinates}.
\end{defn}

It turns out that, if we ignore vertex links, then the vector
representations contain enough information to completely reconstruct a
normal surface.  The results, due to Haken \cite{haken61-knot}
and Tollefson \cite{tollefson98-quadspace}, are as follows.

\begin{lemma} \label{l-vecrep}
    Consider two embedded normal surfaces $S$ and $T$ within
    some compact $3$--ma\-ni\-fold triangulation.
    \begin{itemize}
        \item The standard vector representations of $S$ and $T$
        are equal, that is, $\vrep{S}=\vrep{T}$, if and only if
        surfaces $S$ and $T$ are identical.
        \item The quadrilateral vector representations of $S$ and $T$
        are equal, that is, $\qrep{S}=\qrep{T}$, if and only if
        (i) $S$ and $T$ are identical, or (ii) $S$ and $T$
        differ only by adding or removing vertex linking components.
    \end{itemize}
\end{lemma}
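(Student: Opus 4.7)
The plan is to prove both biconditionals by focusing on the less trivial ``only if'' directions. The converse implications are immediate from the definitions: identical surfaces certainly have identical vector representations, and since $\qrep{\ell(V)}$ is the zero vector for every triangulation vertex $V$, adding or removing vertex linking components leaves $\qrep{S}$ unchanged.

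For the standard statement, suppose $\vrep{S}=\vrep{T}$. Inside any single tetrahedron $\Delta_i$ the combinatorial arrangement of a disjoint family of normal discs is essentially rigid: triangles surrounding different vertices are automatically disjoint, parallel copies of one triangle type sit as concentric nested sheets, and at most one quadrilateral type can appear (different quadrilateral types necessarily intersect). Two disjoint families with the same type counts are therefore related by a normal isotopy of $\Delta_i$. Performing such isotopies simultaneously in each tetrahedron carries $S$ onto $T$, and the isotopies are automatically compatible across identified faces because the collection of arcs on any face is determined entirely by the disc type counts meeting it from either side.

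For the quadrilateral statement, the strategy is to reduce the geometric claim to a linear statement about the difference $D=\vrep{S}-\vrep{T}\in\Z^{7n}$, whose quadrilateral entries all vanish by hypothesis. Both $\vrep{S}$ and $\vrep{T}$ satisfy the matching equations on each interior face of $\tri$, and since the quadrilateral contributions to these equations coincide for $S$ and $T$, the restriction of the matching equations to the triangle entries of $D$ collapses to the statement that for every interior face $F$ and every vertex $v\in F$, the triangle coordinate of $D$ surrounding the two tetrahedral copies of $v$ is the same. Because each vertex link $\ell(V)$ is connected (being a disc or a $2$--sphere), the triangle entries of $D$ surrounding any fixed triangulation vertex $V$ are all equal to a common integer $c_V$, yielding $D=\sum_V c_V\vrep{\ell(V)}$.

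To promote this vector identity to the geometric conclusion, write $c_V=a_V-b_V$ with $a_V,b_V\in\mathbb{Z}_{\geq 0}$, peel the $a_V$ outermost triangular sheets around $V$ off of $S$, and the $b_V$ outermost sheets off of $T$. The key observation legitimising this peeling is that within any tetrahedron the quadrilaterals and the triangles around the other three vertices all lie on the ``far'' side of the outermost triangle surrounding $V$, so these outermost triangles may always be removed as a disjoint normal copy of $\ell(V)$. The result is a single normal surface $R$ with $\vrep{R}=\vrep{S}-\sum_V a_V\vrep{\ell(V)}=\vrep{T}-\sum_V b_V\vrep{\ell(V)}$; by the first part of the lemma $R$ is unique, and so $S$ and $T$ truly differ only by adding and removing vertex linking components. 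I expect this final geometric peeling step to be the main obstacle, since it is where we must verify that the purely combinatorial decomposition of $D$ is realised by an honest decomposition of the surfaces themselves.
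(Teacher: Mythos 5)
The paper does not prove this lemma: it states the result as known and attributes it to Haken and Tollefson, so there is no in-paper proof to compare against. Your argument follows the standard route and is correct in outline, but there is a fixable slip in the peeling step.

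You propose to peel off the $a_V$ \emph{outermost} triangular sheets around $V$, justified by the observation that quadrilaterals and triangles around other vertices lie on the far side of the outermost triangle around $V$. That is true within a single tetrahedron, but it does not show that the outermost triangles glue up across tetrahedra to form a copy of $\ell(V)$: the number of triangles around $V$ can vary between adjacent tetrahedra, so the outermost triangle arc on a shared face may be matched by a quadrilateral arc on the other side. You should peel the \emph{innermost} sheets instead, and you should fix the decomposition $c_V=a_V-b_V$ by taking $a_V=\max(c_V,0)$ and $b_V=\max(-c_V,0)$. Non-negativity of $\vrep{T}$ (resp.\ $\vrep{S}$) then forces every triangular coordinate of $\vrep{S}$ (resp.\ $\vrep{T}$) around $V$ to be at least $a_V$ (resp.\ $b_V$). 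Since quadrilateral arcs always lie farther from a vertex than all triangle arcs on any face, the first $a_V$ arcs out from $V$ on each face are triangle arcs on both sides, and so the innermost $a_V$ levels of triangles around $V$ form $a_V$ parallel disjoint copies of $\ell(V)$, each a genuine component of $S$ (and likewise for $T$). With this correction your argument is complete.
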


Although every embedded normal surface has a standard and quadrilateral
vector representation, there are many vectors in $\R^{7n}$ and
$\R^{3n}$ respectively that do not represent any normal surface at all.
Haken \cite{haken61-knot} and Tollefson \cite{tollefson98-quadspace} completely
characterise which vectors represent embedded normal surfaces,
using the concept of {\em admissible vectors}.
We build up a definition of this concept now, and then present the full
characterisation results of Haken and Tollefson in Theorem~\ref{t-admissible}.

\begin{defn}[Standard Matching Equations] \label{d-matching-std}
    Let $\tri$ be a compact $3$--manifold triangulation built from the $n$
    tetrahedra $\Delta_1,\ldots,\Delta_n$, and consider some
    $7n$--dimensional vector
    $\mathbf{v}=
    \left(t_{1,1},t_{1,2},t_{1,3},t_{1,4},q_{1,1},q_{1,2},q_{1,3};
    \ldots,q_{n,3}\right)$.
    For each non-boundary face $F$ of $\tri$ and each edge $e$ of the face
    $F$, we obtain an equation as follows.

    In essence, our equation states that we must be able to match the
    normal discs on one side of $F$ with the normal discs on the other.
    To express this formally, let $\Delta_i$ and $\Delta_j$ be the two
    tetrahedra joined along face $F$.  In each tetrahedron $\Delta_i$
    and $\Delta_j$ there is precisely
    one triangle type and one quadrilateral type that meets face $F$ in
    an arc parallel to $e$; let these be described by the coordinates
    $t_{i,a}$ and $q_{i,b}$ in $\Delta_i$ and $t_{j,c}$ and $q_{j,d}$
    in $\Delta_j$.  Our equation is then
    \[ t_{i,a} + q_{i,b} = t_{j,c} + q_{j,d} . \]
    The set of all such equations is called the set of
    {\em standard matching equations} for $\tri$.
\end{defn}

\begin{figure}[htb]
\centerline{\includegraphics[scale=1]{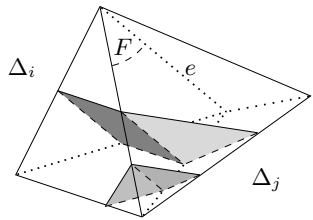}}
\caption{An example of the standard matching equations}
\label{fig-matchingstd}
\end{figure}

Note that if $\tri$ has $f$ non-boundary faces then there are
$3f$ such equations in total; in particular, if $\tri$ has no boundary
at all then there are $6n$ standard matching equations.
Figure~\ref{fig-matchingstd} shows an
illustration of one such equation; here we have one
triangle and one quadrilateral in $\Delta_i$ meeting
two triangles in $\Delta_j$, giving
$(t_{i,a}+q_{i,b}=1+1) = (t_{j,c}+q_{j,d}=2+0)$.

\begin{defn}[Quadrilateral Matching Equations] \label{d-matching-quad}
    Let $\tri$ be a compact $3$--manifold triangulation built from the $n$
    tetrahedra $\Delta_1,\ldots,\Delta_n$, and consider some
    $3n$--di\-men\-sion\-al vector
    $\mathbf{q}=\left(q_{1,1},q_{1,2},q_{1,3};\ldots,q_{n,3}\right)$.
    For each non-boundary edge $e$ of $\tri$, we obtain an equation as
    follows.

    Consider the tetrahedra containing edge $e$; these are
    arranged in a cycle around $e$ as illustrated in
    Figure~\ref{fig-matchingquad}.  Choose an arbitrary direction
    around this cycle, and arbitrarily label the two ends of $e$ as
    {\em upper} and {\em lower}.

    \begin{figure}[htb]
    \centerline{\includegraphics[scale=0.8]{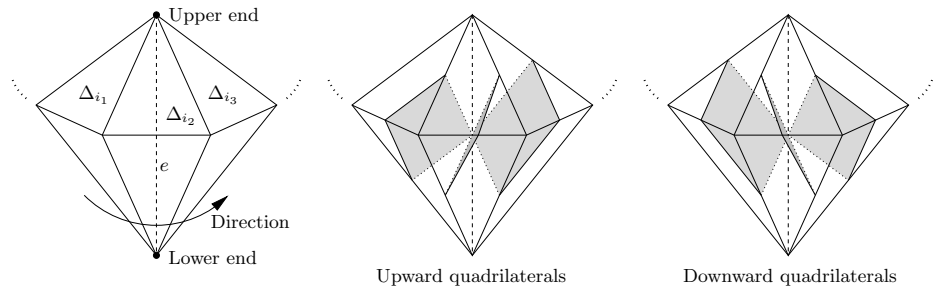}}
    \caption{Building the quadrilateral matching equations}
    \label{fig-matchingquad}
    \end{figure}

    Within each of these tetrahedra, there are two quadrilateral types
    that meet edge $e$:  the {\em upward} quadrilaterals, which rise
    from the lower end of $e$ to the upper end as we move around the
    cycle, and the {\em downward} quadrilaterals, which fall in the
    opposite direction.  These are again illustrated in
    Figure~\ref{fig-matchingquad}.

    We can now create an equation from edge $e$ as follows.
    Let the tetrahedra containing $e$ be
    $\Delta_{i_1},\Delta_{i_2},\ldots,\Delta_{i_k}$, let the
    coordinates corresponding to the upward quadrilateral types be
    $q_{i_1,u_1},q_{i_2,u_2},\ldots,q_{i_k,u_k}$, and let the coordinates
    corresponding to the downward quadrilateral types be
    $q_{i_1,d_1},q_{i_2,d_2},\ldots,q_{i_k,d_k}$.  Then we obtain the equation
    \[ q_{i_1,u_1} + q_{i_2,u_2} + \ldots + q_{i_k,u_k} =
       q_{i_1,d_1} + q_{i_2,d_2} + \ldots + q_{i_k,d_k} .\]

    The set of all such equations is called the set of
    {\em quadrilateral matching equations} for $\tri$.
\end{defn}

We will see that both the standard and quadrilateral matching equations
form necessary but not sufficient
conditions for a non-negative integer vector to represent
an embedded normal surface.  We still
need one more set of constraints, which we define as follows.

\begin{defn}[Quadrilateral Constraints]
    Let $\tri$ be a compact $3$--manifold triangulation built from the $n$
    tetrahedra $\Delta_1,\ldots,\Delta_n$, and let $\mathbf{w}$ be
    either a $7n$--dimensional vector of the form
    $\left(t_{1,1},t_{1,2},t_{1,3},t_{1,4},q_{1,1},q_{1,2},q_{1,3};
    \ldots,q_{n,3}\right)$,
    or a $3n$--dimensional vector of the form
    $\left(q_{1,1},q_{1,2},q_{1,3};\ldots,q_{n,3}\right)$.

    Then $\mathbf{w}$ satisfies the {\em quadrilateral constraints} if,
    for each tetrahedron $\Delta_i$, at most of one of the quadrilateral
    coordinates $q_{i,1}$, $q_{i,2}$ and $q_{i,3}$ is non-zero.
\end{defn}

The quadrilateral constraints arise because any two quadrilaterals of
different types within the same tetrahedron must intersect, yet
embedded normal surfaces cannot have self-in\-ter\-sec\-tions.
We have now gathered enough conditions for the complete
characterisation results of Haken \cite{haken61-knot} and
Tollefson \cite{tollefson98-quadspace}, which we reproduce in
Definition~\ref{d-admissible} and Theorem~\ref{t-admissible}.

\begin{defn}[Admissible Vector] \label{d-admissible}
    Let $\tri$ be a compact $3$--manifold triangulation built from $n$ tetrahedra.
    A ($7n$ or $3n$)--dimensional vector is called {\em admissible}
    if (i)~its entries are all non-negative, (ii)~it satisfies
    the (standard or quadrilateral) matching equations for $\tri$, and
    (iii)~it satisfies the quadrilateral constraints for $\tri$.
\end{defn}

\begin{theorem} \label{t-admissible}
    Let $\tri$ be a compact $3$--manifold triangulation built from $n$ tetrahedra,
    and let $\mathbf{w}$ be a ($7n$ or $3n$)--dimensional
    vector of integers.  Then $\mathbf{w}$ is the
    (standard or quadrilateral) vector representation of an
    embedded normal surface in $\tri$ if and only if $\mathbf{w}$ is
    admissible.
\end{theorem}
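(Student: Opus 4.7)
The plan is to handle the two directions separately. The forward implication is proved by direct counting; the converse, which is the real work, is done constructively, with the quadrilateral case reduced to the standard case. For the forward direction with $\mathbf{w}=\vrep{S}$: entries are non-negative integer disc counts; on each internal face $F$ between $\Delta_i$ and $\Delta_j$, the arcs of $S\cap F$ parallel to an edge $e\subset F$ come on each side from exactly one triangle and one quadrilateral type, so embeddedness forces $t_{i,a}+q_{i,b}=t_{j,c}+q_{j,d}$; and two distinct quadrilateral types in one tetrahedron necessarily intersect, giving the quadrilateral constraints. The quadrilateral matching equation at a non-boundary edge $e$ is then a signed sum of standard matching equations around $e$ in which the triangle contributions telescope, leaving only the upward and downward quadrilateral terms; hence $\qrep{S}$ inherits admissibility from $\vrep{S}$.

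For the standard converse, given an admissible integer $7n$-vector, place the prescribed parallel copies of each triangle and (at most one) quadrilateral type in every $\Delta_i$; the quadrilateral constraints ensure disjointness within each tetrahedron. On each internal face $F$ the standard matching equations guarantee that the boundary arcs of every combinatorial type appear in equal numbers on the two sides, so a normal isotopy within each tetrahedron lets one pair arcs across $F$ and glue the pieces into a properly embedded normal surface $S$ with $\vrep{S}=\mathbf{w}$.

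For the quadrilateral converse, given admissible $\mathbf{q}$, reconstruct triangle coordinates $t_{i,j}$ as follows. On the link of each vertex $V$, fix one initial triangle count and propagate from face to neighbouring face using the standard matching equations. The obstruction to consistency as one closes the loop of tetrahedra around a non-boundary edge $e$ is exactly the signed difference of upward and downward quadrilateral contributions at $e$, which vanishes by admissibility of $\mathbf{q}$. Adding sufficiently many copies of the vertex links $\ell(V)$ makes the $t_{i,j}$ non-negative without disturbing $\mathbf{q}$, producing an admissible $7n$-vector whose surface exists by the standard converse. This closing-up step is the main obstacle: one must identify the quadrilateral matching equation at $e$ as the precise obstruction to the triangle reconstruction, and the ambiguity by vertex linking components recorded in Lemma \ref{l-vecrep} is exactly the slack needed for the reconstruction to succeed.
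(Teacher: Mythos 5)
The paper does not prove this theorem: it attributes the characterisation to Haken and Tollefson and reproduces the statement without argument, so there is no internal proof to compare against. With that understood, your reconstruction is essentially correct and matches the approach in the cited literature. Your forward direction is fine; the observation that the quadrilateral matching equation at a non-boundary edge $e$ arises as a telescoping sum of the standard matching equations at the faces around $e$ (the triangle counts at either vertex of $e$ cancel in consecutive equations, leaving exactly the upward-minus-downward quadrilateral sum) is the standard derivation of Tollefson's equations from Haken's. Your reconstruction of the triangle coordinates from an admissible $3n$--vector by propagation across the triangulated vertex links, followed by adding enough copies of $\ell(V)$ to restore non-negativity, is precisely the mechanism behind the paper's Algorithm~\ref{a-quaddfs} for the canonical extension $\vmapsymbol$, except that there the existence of a consistent solution is taken as given and you are proving it.

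One step is stated more casually than it deserves. You identify the obstruction to consistent propagation around the loop of tetrahedra encircling an edge $e$ with the quadrilateral matching equation at $e$, and conclude that admissibility kills all obstructions. But propagation runs over a spanning tree of the dual graph of the triangulated link $\ell(V)$, and closing up a non-tree edge can produce a cycle that is not itself a small loop around a vertex of $\ell(V)$. You need the additional fact that, because $\ell(V)$ is a disc or $2$--sphere, the facial cycles of the dual graph generate its entire cycle space (equivalently, that the link has trivial first homology rel its boundary), so that the obstruction along any cycle is a sum of the obstructions along edge-loops and hence vanishes. This is standard and easy, but it is the step that makes the argument actually close, and it is worth stating. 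Similarly, in the standard converse the phrase ``a normal isotopy within each tetrahedron lets one pair arcs across $F$'' is hiding the usual observation that the arrangement of normal arcs of the three types on a triangular face is combinatorially determined once the three counts are fixed, which is what guarantees the arcs on the two sides of $F$ can be matched in order; you should make that explicit rather than appeal to an unspecified isotopy.
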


Although we can now reduce normal surfaces to vectors in
$\R^{7n}$ or $\R^{3n}$, we still have infinitely many surfaces to
search through if we are seeking an ``interesting'' surface, such as
an essential $2$--sphere or an incompressible surface.  The following
series of definitions, due to Jaco and Oertel \cite{jaco84-haken},
allow us to reduce such searches to finite problems
by restricting our attention to what are known as
{\em vertex normal surfaces}.

\begin{defn}[Projective Solution Space] \label{d-soln-space}
    For any dimension $d$, we define the following regions in $\R^d$:
    \begin{itemize}
        \item The {\em non-negative orthant} $O^d$ is the region in
        $\R^d$ in which all coordinates are non-negative; that is,
        $O^d = \{\mathbf{x} \in \R^d\,|\,x_i \geq 0\ \forall i\}$.
        \item The {\em projective hyperplane} $J^d$ is the hyperplane in
        $\R^d$ where all coordinates sum to 1; that is,
        $J^d = \{\mathbf{x} \in \R^d\,|\,\sum x_i = 1\}$.
    \end{itemize}
    Note that the intersection $O^d \cap J^d$ is the unit simplex in $\R^d$.

    Let $\tri$ be a compact $3$--manifold triangulation built from 
    $n$ tetrahedra.
    The {\em standard projective solution space} for $\tri$, denoted
    $\stdproj$, is the
    region in $\R^{7n}$ consisting of all points in $O^{7n} \cap J^{7n}$
    that satisfy the standard matching equations.
    Likewise, the {\em quadrilateral projective solution space} for $\tri$,
    denoted $\quadproj$, is
    the region in $\R^{3n}$ consisting of all points in $O^{3n} \cap J^{3n}$
    that satisfy the quadrilateral matching equations.

    Since each $O^d \cap J^d$ is the unit simplex and the matching equations
    are both linear and rational, it follows that the standard and quadrilateral
    projective solution spaces are (finite) convex rational polytopes in
    $\R^{7n}$ and $\R^{3n}$ respectively.
\end{defn}

It is clear from Theorem~\ref{t-admissible} that the non-zero vectors in
$\R^{7n}$ or $\R^{3n}$ that represent embedded normal surfaces
are precisely those positive multiples of points in $\stdproj$ or $\quadproj$
that (i) are integer vectors, and (ii) satisfy the quadrilateral constraints.

\begin{defn}[Projective Image] \label{d-projimage}
    Suppose that $\mathbf{x} \in \R^d$ is not the zero vector.
    We define the {\em projective image} of $\mathbf{x}$, denoted
    $\proj{\mathbf{x}}$, to be the vector $\mathbf{x} / \sum x_i$.
    In other words, $\proj{\mathbf{x}}$ is the (unique) multiple of
    $\mathbf{x}$ that lies in the projective hyperplane $J^d$.

    To avoid complications with vertex links and the empty surface,
    we define the projective image of the zero vector to be the zero vector.
    That is, $\proj{\mathbf{0}} = \mathbf{0}$ (which does {\em not} lie in the
    projective hyperplane $J^d$).

    Let $S$ be an embedded normal surface in some triangulation $\tri$.
    To keep our notation clean, we write the projective images of the
    vector representations $\vrep{S}$ and $\qrep{S}$ as
    $\vproj{S}$ and $\qproj{S}$ respectively.
\end{defn}

\begin{defn}[Vertex Normal Surface] \label{d-vertex}
    Let $\tri$ be a compact $3$--manifold triangulation built from $n$
    tetrahedra, and let $S$ be an embedded normal surface in $\tri$.
    We call $S$ a {\em standard vertex normal surface} if and only if
    $\vproj{S}$ (the projective image of the standard vector
    representation of $S$) is a vertex of the polytope $\stdproj$.
    Likewise, we call $S$ a
    {\em quadrilateral vertex normal surface} if and only if
    $\qproj{S}$ is a vertex of the polytope $\quadproj$.
\end{defn}

Although vertex normal surfaces correspond to vertices of the projective
solution space, this correspondence does not always work in the other
direction.  Instead we must restrict our attention to vectors that satisfy the
quadrilateral constraints.

\begin{defn}[Solution Sets] \label{d-solution}
    Let $\tri$ be a compact $3$--manifold triangulation built from $n$
    tetrahedra.
    The {\em standard solution set} for $\tri$ is the (finite) set of all
    vertices of the polytope $\stdproj$ that satisfy the quadrilateral
    constraints.
    Likewise, the {\em quadrilateral solution set} for $\tri$ is the
    (finite) set of all vertices of the polytope $\quadproj$ that
    satisfy the quadrilateral constraints.
\end{defn}

The correspondence between solution sets and vertex normal surfaces
is now an immediate consequence of
Theorem~\ref{t-admissible} and the fact that each projective solution
space is a rational polytope:

\begin{corollary} \label{c-solnisvertex}
    Let $\tri$ be a compact $3$--manifold triangulation built from $n$
    tetrahedra, and let $\mathbf{w}$ be a ($7n$ or $3n$)--dimensional
    vector.  Then $\mathbf{w}$ is the projective image of the
    vector representation for a
    (standard or quadrilateral) vertex normal surface if and only if
    $\mathbf{w}$ is in the (standard or quadrilateral) solution set.
\end{corollary}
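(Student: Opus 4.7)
The plan is to prove both directions in parallel for the standard and quadrilateral versions, since the arguments are identical; everything is a matter of chasing through the definitions and invoking Theorem~\ref{t-admissible}, with the only non-formal ingredient being the rationality of the projective solution polytopes.

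For the forward direction, I would start by supposing that $S$ is a (standard or quadrilateral) vertex normal surface. By Definition~\ref{d-vertex}, its projective image $\vproj{S}$ or $\qproj{S}$ is a vertex of $\stdproj$ or $\quadproj$ respectively. What remains is to check that this projective image satisfies the quadrilateral constraints. Since $S$ is an embedded normal surface, the vector representation $\vrep{S}$ or $\qrep{S}$ is admissible by Theorem~\ref{t-admissible} and hence satisfies the quadrilateral constraints. The quadrilateral constraints only depend on which coordinates are zero, so they are preserved under division by the positive scalar $\sum x_i$; the projective image therefore satisfies them as well, placing $\vproj{S}$ (resp.\ $\qproj{S}$) in the solution set by Definition~\ref{d-solution}.

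For the backward direction, suppose $\mathbf{w}$ lies in the (standard or quadrilateral) solution set, so $\mathbf{w}$ is a vertex of a rational polytope and satisfies the quadrilateral constraints. As a vertex of a rational polytope, $\mathbf{w}$ is a rational vector with non-negative entries satisfying the appropriate matching equations. Choose a positive integer $\lambda$ such that $\lambda\mathbf{w}$ has integer entries; because the matching equations are linear and homogeneous, and non-negativity and the quadrilateral constraints are preserved under positive scaling, the vector $\lambda\mathbf{w}$ is an admissible integer vector. Theorem~\ref{t-admissible} then produces an embedded normal surface $S$ whose vector representation equals $\lambda\mathbf{w}$. The projective image of this representation is precisely $\mathbf{w}$, which is by hypothesis a vertex of the projective solution space, so $S$ is a vertex normal surface by Definition~\ref{d-vertex}.

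I do not expect any real obstacle: the only fact used beyond pure unwinding of definitions is that $\stdproj$ and $\quadproj$ are rational polytopes (already noted in Definition~\ref{d-soln-space}), which guarantees the clearing-of-denominators step in the backward direction. The result is genuinely a corollary in the strict sense, and the writeup will be short.
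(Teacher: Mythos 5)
Your proof is correct and takes essentially the approach the paper intends; the paper simply asserts the corollary as an "immediate consequence" of Theorem~\ref{t-admissible} and the rationality of the projective solution polytopes, and your argument is exactly the short unwinding of definitions (preservation of the quadrilateral constraints under scaling in one direction, clearing denominators and invoking Theorem~\ref{t-admissible} in the other) that "immediate" refers to.
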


We return now to the overview of a ``typical normal surface algorithm''
as given in Section~\ref{s-intro}.  Such algorithms typically work
because we can prove that, if a $3$--manifold triangulation contains an
``interesting'' surface, then it contains an interesting
{\em vertex normal surface}.  Examples of such theorems include:
\begin{itemize}
    \item Jaco and Oertel \cite{jaco84-haken} show that, if a closed
    irreducible $3$--manifold triangulation contains a two-sided
    incompressible surface, then such a surface exists as a
    standard vertex normal surface.
    Jaco and Tollefson \cite{jaco95-algorithms-decomposition} extend this
    result to
    bounded manifolds, and Tollefson \cite{tollefson98-quadspace} shows that
    such a surface must also exist as a quadrilateral vertex normal surface.

    \item Jaco and Tollefson \cite{jaco95-algorithms-decomposition} prove
    similar results for essential spheres in closed $3$--manifolds and
    essential compression discs in bounded irreducible $3$--manifolds;
    in particular, they show that if such a surface exists then one can
    be found amongst the standard vertex normal surfaces.
    With these results, they build algorithms to solve problems
    such as connected sum decomposition, JSJ decomposition and
    unknot recognition.
\end{itemize}

We can therefore build such an algorithm by constructing the standard
or quadrilateral solution set for our triangulation, and then searching
through the solutions for one that scales to an ``interesting'' normal
surface.

The construction of the solution sets is, though finite, an
exponentially slow procedure in the number of tetrahedra $n$.
The best known algorithm to date is described in \cite{burton08-dd};
it is essentially a variant of the double description method of
Motzkin et al.~\cite{motzkin53-dd}, modified in several ways to exploit
the quadrilateral constraints for greater speed and lower memory
consumption.

The remainder of this paper is concerned mainly with the {\em conversion}
between the standard solution set and the quadrilateral solution set.
Upon establishing conversion algorithms in both directions
(Algorithms~\ref{a-stdtoquad} and~\ref{a-quadtostd}), we finish with a new
algorithm for {\em constructing} the standard solution set
(Algorithm~\ref{a-enumstd}) that is orders of magnitude faster than the
current state-of-the-art.

We conclude this section with a brief discussion of {\em ideal}
triangulations.  These triangulations, due to Thurston
\cite{thurston78-lectures},
include vertices whose links are neither $2$--spheres nor discs, but
rather closed surfaces with genus (such as tori or Klein bottles).
By removing these vertices (and only these vertices), we obtain a
triangulation of a non-compact $3$--manifold.  One of the most well-known
ideal triangulations is the two-tetrahedron triangulation of the figure
eight knot complement, discussed in detail in \cite{matsumoto00-fig8}.

Quadrilateral coordinates play a special role in ideal
triangulations---they allow us to describe {\em spun normal surfaces},
which contain infinitely many triangular discs spiralling in towards the
high-genus vertices.  Such surfaces cannot be represented in standard
coordinates at all, which is why we must restrict our attention in
this paper to compact $3$--manifold triangulations.  The reader is referred
to Tillmann \cite{tillmann08-finite} for a thorough overview of spun normal
surfaces.

\section{Canonical Surfaces and Vectors} \label{s-canonical}

Although our eventual goal is to construct algorithms for converting
between the standard and quadrilateral {\em solution sets}, we begin in
this section with the more modest aim of converting between standard and
quadrilateral {\em vectors}.

One complication we face is that, whereas vectors in standard
coordinates represent unique normal surfaces, vectors in quadrilateral
coordinates do not (Lemma~\ref{l-vecrep}).  We work around this
uniqueness problem by introducing the notion of {\em canonical
surfaces} and {\em canonical vectors} in standard coordinates.  Although
this allows us to map vectors in quadrilateral coordinates to unique
{\em canonical} vectors in standard coordinates and
unique {\em canonical} surfaces,
we will find that these maps are not as well-behaved as we
might like them to be.

The structure of this section is as follows.  We first define canonical
surfaces and canonical vectors and examine some of their basic
properties.  Following this we study several additional maps between
both surfaces and vectors; amongst these maps are the
{\em quadrilateral projection} $\qmapsymbol\co\R^{7n}\to\R^{3n}$ and the
{\em canonical extension} $\vmapsymbol\co\R^{3n}\to\R^{7n}$, which
convert back and forth between vectors
in standard and quadrilateral coordinates.  We finish the section with
Algorithm~\ref{a-quaddfs}, which shows how these conversions can be
performed in as fast a time complexity as possible.

Throughout this section, we assume that we are working with a compact
$3$--manifold triangulation $\tri$ built from $n$ tetrahedra.  We also
allow a little flexibility with our notation:  the expression $\ell(V)$
will be used to refer to both the vertex linking surface surrounding $V$ (as
presented in Definition~\ref{d-link}) and also its standard vector
representation in $\R^{7n}$.

\begin{defn}[Canonical Normal Surface] \label{d-canonical-surface}
    A {\em canonical normal surface} in the triangulation $\tri$ is
    an embedded normal surface that does not contain any vertex linking
    components.
\end{defn}

The purpose of this definition is to resolve the ambiguities
inherent in quadrilateral coordinates.  In particular, it gives us
the following uniqueness properties, which
follow immediately from Lemma~\ref{l-vecrep} and Theorem~\ref{t-admissible}:

\begin{lemma} \label{l-canonical-vecrep}
    Let $S$ and $T$ be canonical normal surfaces within the
    triangulation $\tri$.  Then the quadrilateral vector representations of
    $S$ and $T$ are equal, that is, $\qrep{S}=\qrep{T}$, if and only if
    surfaces $S$ and $T$ are identical.
\end{lemma}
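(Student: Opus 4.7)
The plan is to reduce the claim directly to Lemma~\ref{l-vecrep}. The ``if'' direction is immediate: identical surfaces have identical vector representations in any coordinate system, quadrilateral coordinates included. All of the work is in the ``only if'' direction, where I would leverage the canonical hypothesis to rule out the second case of Lemma~\ref{l-vecrep}.

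Suppose $\qrep{S}=\qrep{T}$ with $S$ and $T$ both canonical. By Lemma~\ref{l-vecrep}, either $S=T$ (and we are done immediately), or $S$ and $T$ differ only by the addition or removal of vertex linking components. In the latter case I would make this explicit in standard coordinates, writing
\[
    \vrep{S} - \vrep{T} \;=\; \sum_{V} a_V\,\ell(V),
\]
where the sum ranges over vertices $V$ of $\tri$ and $a_V\in\Z$. This is legitimate because $\qrep{S}=\qrep{T}$ forces all quadrilateral coordinates of $\vrep{S}$ and $\vrep{T}$ to agree, so the difference $\vrep{S}-\vrep{T}$ lives in the span of the standard vectors $\ell(V)$, each of which records exactly one triangle of each type around $V$ and nothing else.

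The decisive step is now to pinch each $a_V$ to zero using the canonical hypothesis on both sides. If some $a_V>0$, then $\vrep{S}\geq\vrep{T}+\ell(V)\geq\ell(V)$ componentwise, so $S$ contains a copy of the vertex link $\ell(V)$ as a disjoint component, contradicting the assumption that $S$ is canonical; symmetrically, $a_V<0$ contradicts $T$ being canonical. Hence every $a_V$ vanishes, $\vrep{S}=\vrep{T}$, and the first bullet of Lemma~\ref{l-vecrep} yields $S=T$.

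The only real subtlety is the middle step: making rigorous the sense in which the standard vector representations of two surfaces that ``differ only by vertex linking components'' must in fact differ by an integer combination of the $\ell(V)$'s, and then reading off this combination as a genuine collection of vertex linking subcomponents of $S$ or $T$. Once that geometric reading is in place, canonicity of both $S$ and $T$ forces the two-sided inequality $a_V\leq 0\leq a_V$ and the lemma follows. Theorem~\ref{t-admissible} is not invoked directly beyond its implicit role in Lemma~\ref{l-vecrep}.
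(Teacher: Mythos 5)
Your proof is correct and fills in the argument the paper leaves implicit (the paper simply asserts that the lemma follows immediately from Lemma~\ref{l-vecrep} and Theorem~\ref{t-admissible} without writing it out). One small slip worth noting: the intermediate inequality $\vrep{S}\geq\vrep{T}+\ell(V)$ need not hold componentwise if some other coefficient $a_W$ is negative, but the conclusion you actually use downstream, namely $\vrep{S}\geq\ell(V)$ componentwise, does hold (because different vertex links have disjoint supports among the triangular coordinates, and $\vrep{T}$ is non-negative), so the argument is sound and the contradiction goes through vertex by vertex as you intend.
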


\begin{lemma} \label{l-canonical-admissible}
    Let $\mathbf{w}$ be a $3n$--dimensional
    vector of integers.  Then $\mathbf{w}$ is the
    quadrilateral vector representation of a canonical
    normal surface in $\tri$ if and only if $\mathbf{w}$ is
    admissible.  Moreover, this canonical normal surface is unique.
\end{lemma}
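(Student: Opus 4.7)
The plan is to deduce the statement quickly from the three results already in hand: Lemma \ref{l-vecrep}, Theorem \ref{t-admissible} and Lemma \ref{l-canonical-vecrep}. The forward implication is essentially immediate: if $\mathbf{w}=\qrep{S}$ for a canonical normal surface $S$, then $S$ is in particular an embedded normal surface, and Theorem \ref{t-admissible} (applied in quadrilateral coordinates) says its quadrilateral vector representation is admissible.

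For the reverse implication I would start by invoking Theorem \ref{t-admissible} to obtain \emph{some} embedded normal surface $S$ with $\qrep{S}=\mathbf{w}$; the task is then to trade $S$ for a canonical surface with the same quadrilateral vector. The key observation is that, by Definition \ref{d-link}, each vertex link $\ell(V)$ consists entirely of triangular discs, so its contribution to any quadrilateral coordinate is zero. Consequently, stripping off every vertex linking component from $S$ produces a surface $S^\ast$ whose standard vector is $\vrep{S}$ minus a non-negative combination of the $\ell(V)$ (hence still non-negative and still satisfying the standard matching equations and quadrilateral constraints, so still admissible by Theorem \ref{t-admissible}), and whose quadrilateral vector is unchanged. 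By construction $S^\ast$ contains no vertex linking components and is therefore canonical, so $\qrep{S^\ast}=\mathbf{w}$ as required.

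Uniqueness is then a one-line appeal to Lemma \ref{l-canonical-vecrep}: any two canonical surfaces with quadrilateral vector $\mathbf{w}$ are normally isotopic, hence identical in the sense used throughout the paper.

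The only step requiring any thought is the passage from $S$ to $S^\ast$. The potential worry is that ``removing vertex linking components'' is well-defined and really does land in the class of embedded normal surfaces, rather than, say, producing a negative triangle count in some tetrahedron. This is dispatched by the observation that by definition a vertex linking component can only appear in $S$ if $S$ already contains one copy of every triangle type surrounding the relevant vertex, so the subtraction keeps all coordinates non-negative; combined with linearity of the matching equations and the fact that no quadrilateral coordinates are affected, admissibility and hence embeddability (via Theorem \ref{t-admissible}) is preserved.
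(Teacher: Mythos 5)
Your proof is correct and takes essentially the approach the paper intends but leaves implicit (the paper merely asserts the lemma ``follows immediately from Lemma~\ref{l-vecrep} and Theorem~\ref{t-admissible}''). The only place you overwork slightly is in verifying that removing vertex-linking components from $S$ yields an embedded normal surface: since each such component is a connected component of the embedded surface $S$, the complement $S^\ast$ is trivially still a properly embedded normal surface, so there is no need to recheck admissibility via vector arithmetic and Theorem~\ref{t-admissible}.
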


Instead of thinking of canonical surfaces as having no vertex
links, we can instead think of them as surfaces where it is impossible to
{\em remove} a vertex link.  With this in mind, we extend the concept from
surfaces to vectors as follows.

\begin{defn}[Canonical Vector] \label{d-canonical-vector}
    Let $\mathbf{w}$ be any vector in $\R^{7n}$ (i.e., in
    standard coordinates).  We call $\mathbf{w}$ a
    {\em canonical vector} if and only if
    (i)~all triangular coordinates of $\mathbf{w}$ are non-negative, but
    (ii)~if we subtract $\epsilon \ell(V)$ for any $\epsilon > 0$
    and any vertex link $\ell(V)$ then some triangular coordinate
    of $\mathbf{w}$ must become negative.

    In other words, for each vertex $V$ of the triangulation $\tri$,
    the following property must hold.
    Let $t_{i_1,j_1},t_{i_2,j_2},\ldots,t_{i_k,j_k}$ be the
    coordinates in $\mathbf{w}$ corresponding to the
    triangular normal discs surrounding $V$.  Then all of
    $t_{i_1,j_1},t_{i_2,j_2},\ldots,t_{i_k,j_k}$ are at least zero, and at
    least one of these coordinates is {\em equal} to zero.
\end{defn}

Essentially this definition states that (i)~$\mathbf{w}$ {\em might} be
admissible (having non-negative triangular coordinates), but
(ii)~$\mathbf{w} - \epsilon \ell(V)$ can {\em never} be admissible.

We have already established two bijections between surfaces and vectors:
Theorem~\ref{t-admissible} shows a bijection between embedded normal surfaces
and admissible integer vectors in $\R^{7n}$, and
Lemma~\ref{l-canonical-admissible} shows
a bijection between canonical normal surfaces and admissible integer
vectors in $\R^{3n}$.  We can now extend this list with a bijection
between canonical normal surfaces and admissible {\em canonical}
integer vectors in $\R^{7n}$.

\begin{lemma} \label{l-canonical-to-canonical}
    The standard vector representation of a canonical normal surface
    is a canonical vector in $\R^{7n}$.
    Conversely, every admissible canonical integer vector in $\R^{7n}$
    is the standard vector representation of a (unique) canonical normal
    surface.
\end{lemma}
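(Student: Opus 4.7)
The plan is to prove the two directions separately, using the bijection from Theorem~\ref{t-admissible} together with the characterization of vertex links given in Definition~\ref{d-link}.

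For the forward direction, let $S$ be a canonical normal surface. The standard vector $\vrep{S}$ has non-negative triangular coordinates since these simply count discs, so condition~(i) of Definition~\ref{d-canonical-vector} is immediate. For condition~(ii), fix a vertex $V$ of $\tri$ and suppose, for contradiction, that every triangular coordinate of $\vrep{S}$ surrounding $V$ is strictly positive. I claim that $S$ then contains $\ell(V)$ as an embedded component, which contradicts the canonicality of $S$. To prove the claim, in each tetrahedron $\Delta_i$ incident to $V$ there is a unique triangular type surrounding $V$; since all such coordinates are $\geq 1$ and since parallel triangles of the same type are linearly ordered by distance from $V$, we may select the innermost triangle of this type in each $\Delta_i$. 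Because the innermost triangle meets each adjacent face $F$ in an arc that lies strictly closer to the $V$-corner of $F$ than any other normal arc of $S$ on $F$, the innermost arcs on the two sides of $F$ must be matched with each other (this is where the argument uses the embeddedness of $S$ together with the standard matching equations of Definition~\ref{d-matching-std}). Gluing the innermost triangles across all such faces produces exactly one copy of $\ell(V)$ sitting inside $S$ as a union of components, contradicting $S$ being canonical.

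For the converse direction, let $\mathbf{w}$ be an admissible canonical integer vector in $\R^{7n}$. By Theorem~\ref{t-admissible}, there is a unique embedded normal surface $S$ in $\tri$ with $\vrep{S}=\mathbf{w}$; uniqueness of $S$ as a \emph{canonical} surface will then follow from the uniqueness clause of Lemma~\ref{l-vecrep} once canonicality is verified. Suppose for contradiction that $S$ contains a vertex link $\ell(V)$ among its components. Then $S\setminus\ell(V)$ is itself an embedded normal surface, and its standard vector representation is exactly $\mathbf{w}-\ell(V)$; in particular, this vector has all non-negative (triangular) coordinates. Taking $\epsilon=1$ in Definition~\ref{d-canonical-vector}, this contradicts the assumption that $\mathbf{w}$ is canonical. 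Hence $S$ contains no vertex-linking components, so $S$ is canonical, completing the proof.

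The main obstacle is the gluing argument in the forward direction: one must justify carefully that if every triangular type surrounding $V$ appears with positive multiplicity, then one full copy of $\ell(V)$ is genuinely split off as a union of components of $S$, rather than merely existing ``in pieces'' that fail to assemble. I expect the cleanest way to handle this is via the innermost-triangle selection together with the observation that on each face $F$ adjacent to $V$, the innermost normal arc near the $V$-corner of $F$ is forced by embeddedness to be matched to the corresponding innermost arc on the opposite side; the matching equations then guarantee closure of the resulting surface, which by Definition~\ref{d-link} can only be $\ell(V)$.
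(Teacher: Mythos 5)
Your proof is correct, and the converse direction matches the paper exactly: if the surface $S$ represented by $\mathbf{w}$ had a vertex-linking component $\ell(V)$, then all triangular coordinates around $V$ would be positive, directly contradicting the ``some coordinate equals zero'' reformulation in Definition~\ref{d-canonical-vector}.

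For the forward direction you take a genuinely more geometric route than the paper. The paper's proof works entirely at the vector level: it observes that if the admissible integer vector $\mathbf{w}=\vrep{S}$ is not canonical, then every triangular coordinate surrounding some $V$ is $\geq 1$, hence $\mathbf{w}=\ell(V)+\mathbf{w'}$ with $\mathbf{w'}$ still an admissible integer vector, and then invokes Theorem~\ref{t-admissible} to conclude ``immediately''. The step it leaves implicit --- that this vector decomposition actually corresponds to $\ell(V)$ splitting off as a genuine geometric component of $S$ --- is exactly what your innermost-triangle gluing argument supplies from first principles: the innermost triangle of each type surrounding $V$ gives the innermost arc at the $V$-corner of each adjacent face (these triangle arcs sit strictly inside any quadrilateral arcs at that corner), so by embeddedness the innermost triangles on the two sides of each face share that arc and glue up into a closed connected subsurface, which by Definition~\ref{d-link} must be $\ell(V)$ and hence a component of $S$. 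So the two proofs reach the same contradiction; the paper's is more economical because it leans on Theorem~\ref{t-admissible} and the folklore fact that a vertex-link summand is always a disjoint component, while yours is more self-contained and makes that step explicit. One small imprecision worth noting: in a pseudo-triangulation a single tetrahedron can meet $V$ at more than one of its corners, so there need not be a \emph{unique} triangular type surrounding $V$ in each incident tetrahedron. Your argument survives unchanged --- you simply select the innermost triangle of \emph{each} such type --- but the phrasing should be corrected.
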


\begin{proof}
    This result follows immediately from Theorem~\ref{t-admissible} by
    observing that, if an admissible {\em integer}
    vector $\mathbf{w} \in \R^{7n}$
    is not canonical, then all of the triangular coordinates surrounding
    some vertex $V$ are $\geq 1$, and so
    $\mathbf{w} = \ell(V) + \mathbf{w'}$ for some other admissible
    integer vector $\mathbf{w'}$.
\end{proof}

We can observe that, if we restrict our attention to admissible integer
vectors, then we have bijections between
(i)~canonical vectors in standard coordinates and canonical surfaces, and
(ii)~vectors in quadrilateral coordinates and canonical surfaces.
It follows then that we must have a bijection between canonical vectors
in standard coordinates and vectors in quadrilateral coordinates; that
is, {\em a method for converting between coordinate systems}.
We develop this idea further in Definition~\ref{d-proj-ext}.

Although the ``canonical'' property gives us uniqueness results and
bijections that we did not have before, it is not particularly well-behaved.
In particular, it is clear from Definition~\ref{d-canonical-vector} that
this property is preserved under scalar multiplication
but not necessarily under addition.  However, we can salvage the
situation a little as seen in the following result.

\begin{lemma} \label{l-canonical-linear}
    If $\mathbf{w} \in \R^{7n}$ is a canonical vector then so is
    $\lambda \mathbf{w}$ for any $\lambda > 0$.  Likewise, if
    $\mathbf{w} \in \R^{7n}$ is an {\em admissible} canonical vector
    then so is $\lambda \mathbf{w}$ for any $\lambda > 0$.
    Finally, if $\mathbf{w}=\mathbf{x}+\mathbf{y}$ for
    admissible vectors $\mathbf{w},\mathbf{x},\mathbf{y} \in \R^{7n}$
    and $\mathbf{w}$ is canonical then so are $\mathbf{x}$ and $\mathbf{y}$.
\end{lemma}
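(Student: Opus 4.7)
The plan is to unwind the definition of ``canonical vector'' (Definition~\ref{d-canonical-vector}) and check each claim against it. Recall that canonicity has two ingredients: nonnegativity of every triangular coordinate, and, for each vertex $V$ of $\tri$, the existence of at least one triangular coordinate surrounding $V$ that equals zero. This is the only machinery I will need.

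For the first claim, if $\mathbf{w}$ is canonical and $\lambda>0$, then multiplying every coordinate of $\mathbf{w}$ by $\lambda$ preserves the sign of each entry and preserves the property of being zero. Hence the two defining conditions carry over to $\lambda\mathbf{w}$. For the second claim, I would observe that admissibility (Definition~\ref{d-admissible}) has three ingredients --- nonnegativity, the standard matching equations, and the quadrilateral constraints --- and each is preserved under multiplication by a positive scalar: the matching equations are linear and homogeneous, and the quadrilateral constraints only ask whether certain coordinates are zero or nonzero, which a positive scaling does not disturb. Combined with the first claim, this gives admissibility and canonicity of $\lambda\mathbf{w}$.

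The third claim is where the real content lies, although the argument is still short. Since $\mathbf{x}$ and $\mathbf{y}$ are admissible, both have only nonnegative entries; in particular their triangular coordinates are nonnegative, which is the first requirement of canonicity for each of them. For the second requirement, fix any vertex $V$ of $\tri$. Because $\mathbf{w}$ is canonical, there is at least one triangular coordinate index, say $t_{i,j}$, surrounding $V$ with $w_{t_{i,j}} = 0$. The identity $\mathbf{w}=\mathbf{x}+\mathbf{y}$ gives $x_{t_{i,j}}+y_{t_{i,j}}=0$, and combined with $x_{t_{i,j}},y_{t_{i,j}}\ge 0$ this forces $x_{t_{i,j}}=y_{t_{i,j}}=0$. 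Thus both $\mathbf{x}$ and $\mathbf{y}$ have a vanishing triangular coordinate around $V$, and since $V$ was arbitrary, both are canonical.

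The only place where one might stumble is the third part: one has to notice that it is admissibility (rather than canonicity) of $\mathbf{x}$ and $\mathbf{y}$ that supplies the nonnegativity needed to split the zero of $w_{t_{i,j}}$ into two zeros. This is essentially what makes the ``decomposition'' half of canonicity behave well even though canonicity itself is not closed under addition. No further computation is required.
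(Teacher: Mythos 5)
Your proof is correct and takes essentially the same approach as the paper's, which simply remarks that the claims follow immediately from Definition~\ref{d-canonical-vector} and the scale-invariance of the matching equations. You have spelled out the details that the paper leaves implicit; the key step in the third part --- using the admissibility (hence nonnegativity) of $\mathbf{x}$ and $\mathbf{y}$ to split the zero triangular coordinate of $\mathbf{w}$ into zeros in both summands --- is exactly the point the paper is gesturing at.
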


\begin{proof}
    This follows immediately from Definition~\ref{d-canonical-vector}
    and the fact that the matching equations are invariant under scalar
    multiplication.
\end{proof}

We proceed now to define several mappings that express the
relationships between canonical surfaces, non-canonical surfaces,
vectors in standard coordinates and vectors in quadrilateral coordinates.
Lemma~\ref{l-mappings} summarises the interplay between these relationships.
We begin by presenting notation for the domains and ranges of these
functions.

\begin{notn} \label{notn-spaces}
    Let $\surfaces$ denote the set of all embedded normal surfaces
    (up to normal isotopy), and let $\surfacescan \subset \surfaces$
    denote the set of all canonical normal surfaces.
    Let $\vspacea$ and $\qspacea$ denote the set of all admissible
    vectors in $7n$ and $3n$ dimensions respectively, and let
    $\vspaceac \subset \vspacea$ denote the set of all admissible
    canonical vectors in $7n$ dimensions.  Likewise,
    let $\vspaceza$ and $\qspaceza$ denote the set of all admissible
    integer vectors in $7n$ and $3n$ dimensions respectively, and let
    $\vspacezac \subset \vspaceza$ denote the set of all admissible
    canonical integer vectors in $7n$ dimensions.
\end{notn}

It follows then that standard vector representation is
a bijection $\mathbf{v}\co\surfaces\to\vspaceza$ that takes the subset
$\surfacescan \subset \surfaces$ to the subset $\vspacezac \subset
\vspaceza$.  Likewise, quadrilateral vector representation
is a many-to-one function $\mathbf{q}\co\surfaces\to\qspaceza$ that becomes a
bijection when restricted to $\surfacescan$.

\begin{defn}[Represented Surface] \label{d-represented}
    Let $\mathbf{w}$ be an admissible integer vector in
    $\R^{7n}$.  Then the {\em represented surface} of
    $\mathbf{w}$, denoted $\sigma_v(\mathbf{w})$, is the
    unique embedded normal surface with standard vector representation
    $\mathbf{w}$ (as noted in Theorem~\ref{t-admissible}).
    Thus $\sigma_v\co\vspaceza\to\surfaces$ is the inverse function
    to $\mathbf{v}\co\surfaces\to\vspaceza$.

    Likewise, let $\mathbf{w}$ be an admissible integer vector in
    $\R^{3n}$.  Then the {\em represented surface} of
    $\mathbf{w}$, denoted $\sigma_q(\mathbf{w})$, is the
    unique canonical normal surface with quadrilateral vector representation
    $\mathbf{w}$ (as noted in Lemma~\ref{l-canonical-admissible}).
    Thus $\sigma_q\co\qspaceza\to\surfacescan$ is the inverse function
    to the restriction $\mathbf{q}\co\surfacescan\to\qspaceza$.
\end{defn}

\begin{defn}[Canonical Part] \label{d-canonical-part}
    Let $S$ be an embedded normal surface within the triangulation $\tri$.
    The {\em canonical part} of $S$, denoted $\kappa_s(S)$, is the
    canonical normal surface obtained by removing all vertex linking
    components from $S$.
    It follows that $\kappa_s$ is a function
    $\kappa_s\co\surfaces\to\surfacescan$ whose restriction to
    $\surfacescan$ is the identity.

    Similarly, let $\mathbf{w}$ be any vector in $\R^{7n}$.
    The {\em canonical part} of $\mathbf{w}$, denoted $\kappa_v(\mathbf{w})$,
    is the unique canonical vector that can be obtained from $\mathbf{w}$
    by adding and/or subtracting scalar multiples of vertex links.
    It follows that, if we restrict
    our attention to admissible vectors, then
    $\kappa_v$ is a function $\kappa_v\co\vspacea\to\vspaceac$
    whose restriction to $\vspaceac$ is the identity.
\end{defn}

The canonical part of a vector $\mathbf{w}\in\R^{7n}$ can be constructed
as follows.  Let the vertices of the triangulation be $V_1,\ldots,V_m$,
and for each $i$ let $\lambda_i$ be the minimum of all triangular
coordinates in $\mathbf{w}$ that correspond to triangular normal discs
surrounding $V_i$ (so $\mathbf{w}$ is canonical if and only if
every $\lambda_i=0$).  Then $\kappa_v(\mathbf{w})=
\mathbf{w}-\lambda_1\ell(V_1)-\ldots-\lambda_m\ell(V_m)$.

We now come to the point of defining conversion functions between
vectors in standard coordinates and vectors in quadrilateral coordinates.

\begin{defn}[Projection and Extension] \label{d-proj-ext}
    Let $\mathbf{w} \in \R^{7n}$ be any vector in standard coordinates;
    recall that the $7n$ coordinates of $\mathbf{w}$ correspond to
    $3n$ quadrilateral disc types and $4n$ triangular disc types.
    The {\em quadrilateral projection} of $\mathbf{w}$, denoted
    $\qmap{\mathbf{w}}$, is defined to be the vector in $\R^{3n}$ consisting
    of only the $3n$ quadrilateral coordinates for $\mathbf{w}$.  That is, if
    \begin{alignat*}{2}
    \mathbf{w}~=\,(~
        & t_{1,1},t_{1,2},t_{1,3},t_{1,4},\ q_{1,1},q_{1,2}&&,q_{1,3}\ ;\\
        & t_{2,1},t_{2,2},t_{2,3},t_{2,4},\ q_{2,1},q_{2,2}&&,q_{2,3}\ ;\\
        & \ldots &&,q_{n,3}\ ) \in \R^{7n},
    \end{alignat*}
    then
    \[ \qmap{\mathbf{w}}~=\,(~
        q_{1,1},q_{1,2},q_{1,3}\ ;\ q_{2,1},q_{2,2},q_{2,3}\ ;
        \ \ldots,q_{n,3}\ ) \in \R^{3n}.\]

    Conversely, let $\mathbf{w} \in \qspacea$ be any admissible vector in
    quadrilateral coordinates.  The {\em canonical extension} of
    $\mathbf{w}$, denoted $\vmap{\mathbf{w}}$, is defined to be the unique
    admissible canonical vector in $\vspaceac$ whose
    quadrilateral projection is $\mathbf{w}$.

    It follows that, if we restrict our attention to admissible canonical
    vectors, then the quadrilateral projection
    $\qmapsymbol\co\vspaceac\to\qspacea$ is the inverse function to the
    canonical extension $\vmapsymbol\co\qspacea\to\vspaceac$.
\end{defn}

It does need to be shown that canonical extension is well-defined; that is,
that for any admissible $\mathbf{w} \in \qspacea$ there is a unique
admissible canonical $\mathbf{x} \in \vspaceac$ for which
$\qmap{\mathbf{x}}=\mathbf{w}$.
Lemmata~\ref{l-canonical-admissible} and~\ref{l-canonical-to-canonical}
together show this to be true in the integers;
since admissibility and canonicity are invariant under
positive scalar multiplication this is also true in the rationals,
and because the matching equations are rational and linear this
fact extends to the reals.

Quadrilateral projection and canonical extension
are true ``conversion functions'', in the sense that if
$S$ is any embedded normal surface then $\qmapsymbol$ maps $\vrep{S} \mapsto
\qrep{S}$, and if $S$ is also canonical then
$\vmapsymbol$ maps $\qrep{S} \mapsto \vrep{S}$.  The advantage of the
broader definition above is that $\qmapsymbol$ and $\vmapsymbol$ can
also be applied to rational and real vectors, which means that
we can use them to convert not just vector representations of surfaces but
also arbitrary admissible points within the projective solution spaces.

This brings us to the end of our list of mappings.
To conclude this section, we bring these mappings together
and show how they interact (Lemma~\ref{l-mappings}), and then
we describe how the conversions $\qmapsymbol$ and $\vmapsymbol$ can be
performed in as fast a time complexity as possible (Algorithm~\ref{a-quaddfs}).

\begin{figure}[htb]
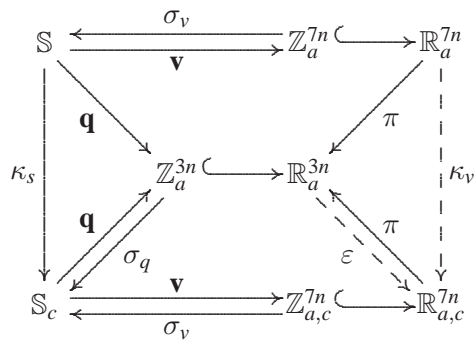

\centerline{\begindc{\commdiag}[1]
\obj(0,100)[S]{$\surfaces$}
\obj(0,0)[Sc]{$\surfacescan$}
\obj(100,100)[A]{$\vspaceza$}
\obj(100,0)[Ac]{$\vspacezac$}
\obj(50,50)[Aq]{$\qspaceza$}
\obj(150,100)[V]{$\vspacea$}
\obj(150,0)[Vc]{$\vspaceac$}
\obj(100,50)[Q]{$\qspacea$}
\mor(100,0)(150,0){}[\atleft,\injectionarrow]
\mor(100,100)(150,100){}[\atright,\injectionarrow]
\mor(50,50)(100,50){}[\atleft,\injectionarrow]
\mor(0,100)(0,0){$\kappa_s$}[\atright,\solidarrow]
\mor(150,100)(150,0){$\kappa_v$}[\atleft,\dasharrow]
\mor(0,3)(100,3){$\mathbf{v}$}[\atleft,\solidarrow]
\mor(0,97)(100,97){$\mathbf{v}$}[\atright,\solidarrow]
\mor(100,-3)(0,-3){$\sigma_v$}[\atleft,\solidarrow]
\mor(100,103)(0,103){$\sigma_v$}[\atright,\solidarrow]
\mor(-2,100)(48,50){$\mathbf{q}$}[\atright,\solidarrow]
\mor(-2,2)(48,52){$\mathbf{q}$}[\atleft,\solidarrow]
\mor(52,48)(2,-2){$\sigma_q$}[\atleft,\solidarrow]
\mor(150,100)(100,50){$\qmapsymbol$}[\atleft,\solidarrow]
\mor(150,2)(100,52){$\qmapsymbol$}[\atright,\solidarrow]
\mor(96,48)(146,-2){$\vmapsymbol$}[\atright,\dasharrow]
\enddc}
\caption{A commutative diagram of mappings}
\label{fig-mappings}
\end{figure}

\begin{lemma} \label{l-mappings}
    Consider Figure~\ref{fig-mappings}, which shows the interactions
    between the maps $\mathbf{v}$, $\mathbf{q}$, $\sigma_v$, $\sigma_q$,
    $\kappa_s$, $\kappa_v$, $\qmapsymbol$ and $\vmapsymbol$.  Note that
    some of these maps appear twice---once in their full generality,
    and once when restricted to canonical surfaces or vectors.
    All of the unnamed hooked arrows in this diagram are inclusion maps.
    Then the following facts are true:
    \begin{enumerate}[(i)]
        \item Figure~\ref{fig-mappings} is a commutative diagram.

        \item All double arrows in this diagram represent inverse functions.
        This includes the pair
        $\mathbf{v},\sigma_v\co\surfaces\rightleftharpoons\vspaceza$,
        their canonical restrictions
        $\mathbf{v},\sigma_v\co\surfacescan\rightleftharpoons\vspacezac$,
        the pair
        $\mathbf{q},\sigma_q\co\surfacescan\rightleftharpoons\qspaceza$,
        and the pair
        $\qmapsymbol,\vmapsymbol\co\vspaceac\rightleftharpoons\qspacea$.

        \item Of the three vector-to-vector maps ($\qmapsymbol$, $\vmapsymbol$
        and $\kappa_v$), only $\qmapsymbol$ is linear.\footnote{By
        ``linear'', we only require here that
        $\qmap{\lambda\mathbf{x}+\mu\mathbf{y}}=
            \lambda\qmap{\mathbf{x}}+\mu\qmap{\mathbf{y}}$ for
        $\lambda,\mu \geq 0$.  This is because the domains
        $\vspacea$ and $\vspaceac$ are not closed under multiplication
        by $\lambda<0$.}
        The remaining maps $\vmapsymbol$ and $\kappa_v$ preserve
        scalar multiplication
        (that is, $\vmap{\lambda\mathbf{w}}=\lambda\vmap{\mathbf{w}}$
        and $\kappa_v(\lambda\mathbf{w})=\lambda\kappa_v(\mathbf{w})$
        for $\lambda \geq 0$), but they need not preserve addition.
        The non-linear maps $\vmapsymbol$ and $\kappa_v$
        are drawn in the diagram with dotted lines.
    \end{enumerate}
\end{lemma}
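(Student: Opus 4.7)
The plan is to dispatch parts (i) and (ii) by unwinding definitions and citing earlier results, then to focus the real work on demonstrating the failure of additivity in part~(iii).

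For part~(i), I would verify commutativity of the diagram face by face. The fundamental identity $\mathbf{q} = \qmapsymbol \circ \mathbf{v}$ is immediate from Definition~\ref{d-vecrep}: the quadrilateral vector representation of a surface consists of exactly the $3n$ quadrilateral coordinates of its standard vector representation. Similarly, $\mathbf{v} \circ \kappa_s = \kappa_v \circ \mathbf{v}$ holds because removing the vertex-linking components from a surface $S$ subtracts precisely the multiples of each $\ell(V)$ described by the minimum-triangular-coordinate construction immediately following Definition~\ref{d-canonical-part}. Every remaining face of the diagram is a restatement of one of these two identities together with the inverse-function relationships of part~(ii).

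For part~(ii), all four inverse pairs are established by earlier results: $\mathbf{v}, \sigma_v$ on $\surfaces \leftrightarrow \vspaceza$ by Theorem~\ref{t-admissible} and Definition~\ref{d-represented}; its canonical restriction by Lemma~\ref{l-canonical-to-canonical}; $\mathbf{q}, \sigma_q$ on $\surfacescan \leftrightarrow \qspaceza$ by Lemma~\ref{l-canonical-admissible} and Definition~\ref{d-represented}; and $\qmapsymbol, \vmapsymbol$ on $\vspaceac \leftrightarrow \qspacea$ by the construction of $\vmapsymbol$ in Definition~\ref{d-proj-ext}.

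For part~(iii), linearity of $\qmapsymbol$ is immediate since coordinate projection is linear. Scalar homogeneity of $\vmapsymbol$ and $\kappa_v$ follows from Lemma~\ref{l-canonical-linear}: if $\mathbf{x} = \vmap{\mathbf{w}}$ then $\lambda \mathbf{x}$ is admissible and canonical with $\qmap{\lambda \mathbf{x}} = \lambda \mathbf{w}$, so by uniqueness $\vmap{\lambda \mathbf{w}} = \lambda \vmap{\mathbf{w}}$; and since $\kappa_v(\mathbf{w})$ is obtained by subtracting multiples of vertex links determined by the minimum triangular coordinate around each vertex, those multiples scale linearly with $\mathbf{w}$.

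The main obstacle is to exhibit explicit counterexamples showing that neither $\kappa_v$ nor $\vmapsymbol$ preserves addition. For $\kappa_v$, it suffices to produce two admissible canonical vectors $\mathbf{x}, \mathbf{y} \in \vspaceac$ whose sum is admissible but not canonical, for then $\kappa_v(\mathbf{x}) + \kappa_v(\mathbf{y}) = \mathbf{x} + \mathbf{y}$ while $\kappa_v(\mathbf{x} + \mathbf{y})$ is strictly smaller. The cleanest setting is a one-vertex triangulation with unique vertex $V$: here every triangular disc type surrounds $V$, so one can choose canonical surfaces $S, T$ for which the triangular types missing from $S$ are present in $T$ and vice versa, making every triangular coordinate of $\vrep{S} + \vrep{T}$ around $V$ strictly positive. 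The failure of additivity for $\vmapsymbol$ then follows by applying $\qmapsymbol$ to this pair: if $\vmapsymbol$ were additive, then $\vmap{\qmap{\mathbf{x}} + \qmap{\mathbf{y}}}$ would equal the non-canonical vector $\mathbf{x} + \mathbf{y}$, contradicting the definition of canonical extension.
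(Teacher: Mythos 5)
The paper itself dispatches this lemma in two sentences, declaring everything a "straightforward consequence of the relevant definitions" and singling out only the observation that vertex links contain no quadrilateral discs (so that $\mathbf{q} \circ \kappa_s = \mathbf{q}$ and $\qmapsymbol \circ \kappa_v = \qmapsymbol$). Your proposal goes into much more detail, which is welcome, but two small points deserve attention.

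First, in part~(i) you claim that $\mathbf{q}=\qmapsymbol\circ\mathbf{v}$ and $\mathbf{v}\circ\kappa_s=\kappa_v\circ\mathbf{v}$, together with the inverse relationships of part~(ii), suffice to establish every face of the diagram. That is not quite enough: to get $\mathbf{q}\circ\kappa_s=\mathbf{q}$ (the triangle through $\surfaces$, $\surfacescan$, and $\qspaceza$) from your two identities, one still needs $\qmapsymbol\circ\kappa_v=\qmapsymbol$, which does not follow formally but instead rests on the observation that $\kappa_v$ only touches triangular coordinates (because vertex links contain only triangular discs) while $\qmapsymbol$ discards them entirely. This is precisely the one observation the paper's proof emphasises, so you should state it explicitly rather than fold it into "restatements of these two identities."

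Second, in part~(iii) your counterexample for $\kappa_v$ requires two compatible canonical vectors $\mathbf{x},\mathbf{y}\in\vspaceac$ whose zero triangular coordinates around the vertex $V$ are disjoint (so that $\min_V(\mathbf{x}+\mathbf{y})>0$ while $\min_V(\mathbf{x})=\min_V(\mathbf{y})=0$). You assert such a pair "can be chosen" in a one-vertex triangulation, but this is not automatic: both surfaces must also satisfy the quadrilateral constraints simultaneously (so their sum is admissible), and for some triangulations all canonical vertex surfaces might share a common zero index. Since the lemma only asserts that these maps \emph{need not} preserve addition, a single concrete triangulation witnessing the failure is enough, but you should name one (or at least observe that the failure is guaranteed whenever the triangulation supports two compatible canonical surfaces with disjoint triangular zero-sets, which holds for well-known examples). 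As written, the existence claim is plausible but not demonstrated. The derivation of $\vmapsymbol$'s non-additivity from $\kappa_v$'s, via applying $\qmapsymbol$, is clean and correct.

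Overall your approach is sound and considerably more explicit than the paper's, but it should acknowledge the one observation the paper isolates as non-trivial and should back up the existence of the non-additivity counterexample with a concrete instance.
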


\begin{proof}
    These observations are all straightforward consequences
    of the relevant definitions, and we do not recount the details here.
    The one additional observation required is that vertex linking
    surfaces only contain triangular discs, which is why
    $\mathbf{q} \circ \kappa_s = \mathbf{q}$
    and $\qmapsymbol \circ \kappa_v = \qmapsymbol$
    (since $\mathbf{q}$ and $\qmapsymbol$ ignore triangular discs entirely).
\end{proof}

Note that some of the maps described by Lemma~\ref{l-mappings}
are more general than
Figure~\ref{fig-mappings} indicates.  In particular, both $\qmapsymbol$ and
$\kappa_v$ are defined on all $7n$--dimensional vectors, admissible or not.
The commutative relationship $\qmapsymbol \circ \kappa_v = \qmapsymbol$
still holds in this more general setting, but we do not worry about this here.

We return now to the two key conversion functions: the quadrilateral
projection $\qmapsymbol\co\R^{7n}\to\R^{3n}$ and the canonical extension
$\vmapsymbol\co\qspacea\to\vspaceac$.  It is clear how to compute
$\qmap{\mathbf{w}}$ quickly (just drop all triangular coordinates from
$\mathbf{w}$), but it is less clear how to compute $\vmap{\mathbf{w}}$
quickly.

A simple algorithm for computing $\vmap{\mathbf{w}}$
might run as follows.  Given a quadrilateral
vector $\mathbf{w}\in\qspacea$,
we solve the standard matching equations
using typical methods of linear algebra to obtain a matching set of
triangular coordinates (there will be many solutions but any one will do),
and then we apply $\kappa_v$ to make the resulting vector in $\R^{7n}$
canonical.

However, this algorithm is slow---to solve the standard
matching equations requires $O(n^3)$ time for a simple implementation,
though more sophisticated solvers can improve upon
this a little.\footnote{We can improve upon $O(n^3)$ by exploiting
the sparseness and rationality of the standard matching equations;
see for instance the $\simeq O(n^{2.5})$ iterative algorithm of
Eberly et al.~\cite{eberly06-linear}.}
It turns out that for the specific problem of computing $\vmap{\mathbf{w}}$
we can do much better, as seen in the following result.

\begin{algorithm} \label{a-quaddfs}
    Let $\mathbf{w} \in \qspacea$ be any admissible vector in
    quadrilateral coordinates.  Then the following algorithm computes
    the canonical extension $\vmap{\mathbf{w}}$, and does so in
    $O(n)$ time.

    We begin by constructing a vector
    \begin{alignat*}{2}
    \mathbf{x}~=\,(~
        & t_{1,1},t_{1,2},t_{1,3},t_{1,4},\ q_{1,1},q_{1,2}&&,q_{1,3}\ ;\\
        & t_{2,1},t_{2,2},t_{2,3},t_{2,4},\ q_{2,1},q_{2,2}&&,q_{2,3}\ ;\\
        & \ldots &&,q_{n,3}\ ) \in \R^{7n}
    \end{alignat*}
    whose quadrilateral coordinates $q_{i,j}$
    are copied directly from $\mathbf{w}$,
    and whose triangular coordinates $t_{i,j}$ are initially unknown.
    Then, for each
    vertex $V$ of the triangulation $\tri$, we perform the following steps:
    \begin{enumerate}[1.]
        \item \label{en-quaddfs-start}
        Choose an arbitrary triangular disc type surrounding $V$, and
        set the corresponding triangular coordinate of $\mathbf{x}$ to
        zero.

        \item \label{en-quaddfs-search}
        Run through all triangular disc types surrounding $V$ using a
        depth-first search, beginning at the disc type chosen in
        step~\enref{en-quaddfs-start} above.  By ``depth-first search'',
        we mean that after visiting some triangular disc type,
        we recursively visit the three adjacent\footnote{{\em Adjacent}
        in the sense of the standard matching equations: two adjacent
        disc types sit within adjacent tetrahedra, and their boundary
        arcs within the common tetrahedron face are parallel.  Refer
        to Figure~\ref{fig-matchingstd} for an illustration.}
        triangular disc types
        in turn (ignoring those that have been visited already).

        Each time we visit a triangular disc type, we set the
        corresponding triangular coordinate of $\mathbf{x}$ as follows.
        Suppose we are visiting the triangular disc type corresponding
        to coordinate $t_{i,a}$, having just come from the (adjacent)
        triangular disc type corresponding to coordinate $t_{j,c}$.
        Then one of the standard matching equations for $\tri$ is of the
        form $t_{i,a}+q_{i,b}=t_{j,c}+q_{j,d}$.  Since we already have
        values for $t_{j,c}$, $q_{i,b}$ and $q_{j,d}$, we can use this
        matching equation to set the unknown coordinate $t_{i,a}$
        accordingly.

        \item \label{en-quaddfs-canonise}
        Once this depth-first search is complete, we have values assigned
        to all triangular coordinates of $\mathbf{x}$ surrounding $V$.
        Let $\lambda$ be the minimum of these triangular coordinates;
        we now subtract $\lambda \ell(V)$ from $\mathbf{x}$.
    \end{enumerate}
\end{algorithm}

\begin{proof}
    First we note that the algorithm is well-defined; in particular,
    that each depth-first search in step~\enref{en-quaddfs-search}
    runs to completion (that is, we visit every triangular
    disc surrounding $V$).  This follows
    immediately from the fact that each vertex link $\ell(V)$ is connected.

    Our next task is to prove the algorithm correct.
    Consider step~\enref{en-quaddfs-start}, where we set an arbitrary
    triangular coordinate surrounding vertex $V$ to zero.
    Suppose instead that we set this coordinate to $\mu$.  By
    examining the form of the standard matching equations in
    step~\enref{en-quaddfs-search}, we see that this $+\mu$ would
    propagate through every triangular disc type surrounding $V$; in other
    words, by the end of step~\enref{en-quaddfs-search} we would have
    added an extra $\mu\ell(V)$ to the solution $\mathbf{x}$.
    However, this would then cause us to subtract an extra
    $\mu\ell(V)$ from $\mathbf{x}$ in step~\enref{en-quaddfs-canonise}.
    Therefore {\em the value given to the first triangular coordinate
    in step~\enref{en-quaddfs-start} does not affect the final solution
    $\mathbf{x}$}.

    Since $\vmap{\mathbf{w}}$ is known to satisfy the standard matching
    equations, and since the only coordinate assignment in our algorithm that
    does {\em not} use the standard matching equations
    (step~\enref{en-quaddfs-start}) turns out to be irrelevant,
    it follows that $\mathbf{x}=\vmap{\mathbf{w}}$.  That is, the
    algorithm is correct.

    Finally, we observe that the algorithm runs in $O(n)$ time.  Each
    of the $3n$ triangular disc types in $\tri$ is visited precisely once in
    steps~\enref{en-quaddfs-start} and~\enref{en-quaddfs-search}; moreover,
    for each disc type there is a small constant number of adjacencies
    (three) to examine.  It follows that, assuming we are careful with our
    implementation\footnote{For instance, when visiting a disc type in
    step~\enref{en-quaddfs-search}, we do
    not search through all other disc types to find which are adjacent;
    instead we compute this information directly in constant time.
    Likewise, we do not run through all disc types in $\tri$
    for steps~\enref{en-quaddfs-start} and~\enref{en-quaddfs-canonise}
    when we only require those surrounding a single vertex $V$.},
    the time complexity of this algorithm is indeed $O(n)$.
\end{proof}

As a final observation, $\vmapsymbol$ must construct a vector of
length $7n$ by definition, which means that {\em any} algorithm for computing
$\vmap{\mathbf{w}}$ must run in at least $O(n)$ time.
Therefore the $O(n)$ time complexity of
Algorithm~\ref{a-quaddfs} is the fastest time complexity possible.

\section{The Easy Direction: Standard to Quadrilateral} \label{s-stdtoquad}

At this point we are ready to build algorithms for converting between
the standard and quadrilateral {\em solution sets}.  In this section we consider
the simpler direction: converting the standard solution set into
the quadrilateral solution set.

We begin by proving some necessary and sufficient conditions for
vertex normal surfaces (Lemmata~\ref{l-vertexsplit}
and~\ref{l-vertexpositions}).  We then show that the canonical part of
every {\em quadrilateral}
vertex normal surface is also a {\em standard} vertex normal surface
(Lemma~\ref{l-quadisstd}), and use this as the basis for our
standard-to-quadrilateral conversion algorithm
(Algorithm~\ref{a-stdtoquad}).

Once again, we assume throughout this section that we are working with a
compact $3$--manifold triangulation $\tri$ built from $n$ tetrahedra.

\begin{lemma} \label{l-vertexsplit}
    Let $S$ be an embedded normal surface in $\tri$ for which
    $\vrep{S} \neq \mathbf{0}$.
    If $S$ is a standard vertex normal surface, then whenever
    $\vrep{S} = \alpha\,\mathbf{u} + \beta\,\mathbf{w}$ for
    admissible vectors $\mathbf{u},\mathbf{w}\in\R^{7n}$
    and constants $\alpha,\beta > 0$, it must be true that both
    $\mathbf{u}$ and $\mathbf{w}$ are multiples of $\vrep{S}$.
    Conversely, if $S$ is not a standard vertex normal surface,
    then there exist embedded normal surfaces $U$ and $W$ and
    rationals $\alpha,\beta > 0$ for which
    $\vrep{S} = \alpha\,\vrep{U} + \beta\,\vrep{W}$ but where
    neither $\vrep{U}$ nor $\vrep{W}$ are multiples of $\vrep{S}$.

    Moreover, these statements are also true in quadrilateral
    coordinates, where we replace ``standard'', $\vrep{\cdot}$ and
    $\R^{7n}$ with ``quadrilateral'', $\qrep{\cdot}$
    and $\R^{3n}$ respectively.
\end{lemma}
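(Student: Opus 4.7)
The plan is to translate the vertex condition on $\vproj{S} \in \stdproj$ (the definition of ``standard vertex normal surface'') into the algebraic condition on $\vrep{S}$ via the standard correspondence between vertices of a polytope meeting the projective hyperplane and extremal rays of the underlying cone of admissible vectors.

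For the forward direction, assume $\vproj{S}$ is a vertex of $\stdproj$, and suppose $\vrep{S} = \alpha\mathbf{u} + \beta\mathbf{w}$ with $\mathbf{u},\mathbf{w}$ admissible and $\alpha,\beta > 0$. If either summand is zero the conclusion is immediate, so assume both are nonzero. Since $\mathbf{u}$ and $\mathbf{w}$ are non-negative and satisfy the standard matching equations, their projective images $\proj{\mathbf{u}},\proj{\mathbf{w}}$ both lie in $\stdproj$. Writing $s = \sum \vrep{S}_i$, $u = \sum\mathbf{u}_i$, $w = \sum\mathbf{w}_i$ (all positive), one checks directly that
\[ \vproj{S} \;=\; \frac{\alpha u}{s}\,\proj{\mathbf{u}} \;+\; \frac{\beta w}{s}\,\proj{\mathbf{w}}, \]
with strictly positive weights summing to $1$. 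The vertex property of $\vproj{S}$ forces $\proj{\mathbf{u}} = \proj{\mathbf{w}} = \vproj{S}$, so both $\mathbf{u}$ and $\mathbf{w}$ are positive multiples of $\vrep{S}$.

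For the converse, assume $\vproj{S}$ is not a vertex of $\stdproj$, so it lies in the relative interior of some face $F \subseteq \stdproj$ with $\dim F \geq 1$. I would first observe that \emph{any} decomposition $\vrep{S} = \alpha \mathbf{u} + \beta \mathbf{w}$ with $\mathbf{u},\mathbf{w}$ non-negative satisfying the matching equations automatically has $\mathbf{u},\mathbf{w}$ satisfying the quadrilateral constraints: in any tetrahedron $\Delta_i$ and any quadrilateral coordinate $q_{i,j}$ with $q_{i,j}(S)=0$, non-negativity plus $\alpha u_{i,j} + \beta w_{i,j} = 0$ forces $u_{i,j} = w_{i,j} = 0$. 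Thus it suffices to produce two distinct rational points in $F$ on either side of $\vproj{S}$. Since $F$ is a rational polytope of positive dimension, I would pick any rational vertex $\mathbf{p}_1 \in F$ different from $\vproj{S}$ and let $\mathbf{p}_2$ be the point where the ray from $\mathbf{p}_1$ through $\vproj{S}$ exits $F$; this is the intersection of a rational line with a rational supporting hyperplane, hence rational. Clearing denominators yields admissible integer vectors $\mathbf{u},\mathbf{w}$ proportional to $\mathbf{p}_1, \mathbf{p}_2$, which by Theorem~\ref{t-admissible} represent embedded normal surfaces $U,W$. A positive rational combination $\vrep{S} = \alpha \vrep{U} + \beta \vrep{W}$ then follows from $\vproj{S}$ lying in the open segment between $\mathbf{p}_1$ and $\mathbf{p}_2$, and since $\proj{\vrep{U}} = \mathbf{p}_1 \neq \vproj{S} \neq \mathbf{p}_2 = \proj{\vrep{W}}$, neither $\vrep{U}$ nor $\vrep{W}$ is a multiple of $\vrep{S}$.

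The quadrilateral-coordinate statement follows by the exact same two arguments, replacing $\stdproj$ by $\quadproj$, $\vrep{\cdot}$ by $\qrep{\cdot}$, and the standard matching equations by the quadrilateral matching equations; the support-inheritance argument for the quadrilateral constraints is unchanged. The main obstacle lies in the converse direction: one needs not merely two distinct points in $\stdproj$ whose combination is $\vproj{S}$, but points that scale to admissible \emph{integer} vectors, so that they genuinely represent embedded normal surfaces. Handling this requires both the rationality of the polytope $\stdproj$ and the automatic inheritance of the non-convex quadrilateral constraints from $S$.
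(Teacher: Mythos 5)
Your proof is correct and follows essentially the same approach as the paper's: reduce the vertex condition on $\vproj{S}$ to a statement about convex combinations in $\stdproj$, and then observe that the quadrilateral constraints are automatically inherited by any non-negative decomposition of $\vrep{S}$ because the zero support of $\vrep{S}$ is preserved. The paper's version of the converse is slightly more economical (it simply takes two rational points of $\stdproj$ on opposite sides of the non-vertex $\vproj{S}$ rather than constructing them from a vertex and an exit point of the minimal face), but the underlying argument is the same.
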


In essence, we are taking a basic fact about polytope vertices
and showing that it holds true even when we restrict our attention to
{\em admissible} vectors within the polytope.  Note that the two
statements of this lemma are not exactly converse; instead each is a
little stronger than the converse of the other, making them slightly
easier to exploit later on.

\begin{proof}
    The proofs are identical in standard and quadrilateral coordinates;
    here we consider standard coordinates only.

    Suppose $S$ is a standard vertex normal surface.  Then the given
    condition on $\mathbf{u}$ and $\mathbf{w}$ follows immediately from the
    fact that $\vproj{S}$ is a vertex of the polytope $\stdproj$.

    On the other hand, suppose that $S$ is not a standard vertex normal
    surface.  Then $\vproj{S}$ is not a vertex of the polytope $\stdproj$,
    and so we can find rational vectors $\mathbf{u},\mathbf{w} \in \stdproj$
    on opposite sides of $\vproj{S}$; that is,
    $\mathbf{u},\mathbf{w} \neq \vproj{S}$ and
    $\frac12(\mathbf{u}+\mathbf{w})=\vproj{S}$.

    We show that both $\mathbf{u}$ and $\mathbf{w}$ satisfy the
    quadrilateral constraints as follows.  Without loss of generality,
    suppose that $\mathbf{u}$ does {\em not} satisfy the quadrilateral
    constraints.  Then, since $\vproj{S}$ does, there must be some
    quadrilateral coordinate $q_{i,j}$ that is zero in $\vproj{S}$ but
    strictly positive in $\mathbf{u}$.  It follows that this coordinate
    is negative in $\mathbf{w}$, contradicting the claim that
    $\mathbf{w} \in \stdproj$ (recall that $\stdproj$ lies in the
    non-negative orthant).

    Therefore both $\mathbf{u}$ and $\mathbf{w}$ are rational vectors in
    $\stdproj$ that satisfy the quadrilateral constraints.  It follows
    from Theorem~\ref{t-admissible} that we can find embedded normal
    surfaces $U$ and $W$ for which $\vproj{U}=\mathbf{u}$ and
    $\vproj{W}=\mathbf{w}$, whereupon we find that
    $\vrep{S} = \alpha\,\vrep{U} + \beta\,\vrep{W}$ for $\alpha,\beta > 0$
    but neither $\vrep{U}$ nor $\vrep{W}$ is a multiple of $\vrep{S}$.
\end{proof}

Note that Lemma~\ref{l-vertexsplit} has slightly different implications in
standard and quadrilateral coordinates.  For instance, the condition
$\vrep{S} \neq \mathbf{0}$ requires the surface $S$ to be non-empty, but
$\qrep{S} \neq \mathbf{0}$ requires that $S$ is not a union of vertex links.
Other differences arise regarding scalar multiplication.  For example,
for certain types of two-sided surface $S$, we have that
$\vrep{U}$ is an integer multiple of $\vrep{S}$ if and only if the
surface $U$ consists of zero or more copies of $S$.  On the other hand,
$\qrep{U}$ is an integer multiple of $\qrep{S}$ if and only if $U$
consists of zero or more copies of $S$ with possibly some vertex links
added or subtracted.

Our next result allows us to identify vertex normal surfaces based
purely on which coordinates are zero and which are non-zero.

\begin{defn}[Domination]
    Let $\mathbf{x}$ and $\mathbf{y}$ be vectors in $\R^d$.
    We say that $\mathbf{x}$ {\em dominates} $\mathbf{y}$ if,
    whenever a coordinate $x_i$ is zero, the corresponding coordinate
    $y_i$ is zero also.
    We say that $\mathbf{x}$ {\em strictly dominates} $\mathbf{y}$ if
    (i) $\mathbf{x}$ dominates $\mathbf{y}$, and (ii) there is some
    coordinate $y_i$ that is zero for which the corresponding coordinate
    $x_i$ is non-zero.
\end{defn}

For instance, in $\R^3$ the vector $(0,5,3)$ strictly dominates $(0,2,0)$,
the vectors $(1,0,2)$ and $(3,0,1)$ both dominate each other (but
not strictly), and neither of $(0,2,5)$ or $(7,0,4)$ dominates the other.

When discussing domination we use $\mathbf{x}$ and $\proj{\mathbf{x}}$
interchangeably, since both $\mathbf{x}$ and $\proj{\mathbf{x}}$ have zero
coordinates in the same positions.

\begin{lemma} \label{l-vertexpositions}
    Let $S$ be an embedded normal surface in $\tri$ for which
    $\vrep{S} \neq \mathbf{0}$.
    If $S$ is a standard vertex normal surface,
    then whenever $\vrep{S}$ dominates $\mathbf{u}$ for some
    admissible vector $\mathbf{u}\in\R^{7n}$, it must be true that
    $\mathbf{u}$ is a multiple of $\vrep{S}$.
    Conversely, if $S$ is not a standard vertex normal surface,
    then there is some standard vertex normal surface $U$ for which
    $\vrep{S}$ strictly dominates $\vrep{U}$.

    Moreover, these statements are also true in quadrilateral
    coordinates, where we replace ``standard'', $\vrep{\cdot}$
    and $\R^{7n}$ with ``quadrilateral'', $\qrep{\cdot}$
    and $\R^{3n}$ respectively.
\end{lemma}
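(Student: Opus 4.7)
The plan is to reduce both directions to Lemma~\ref{l-vertexsplit} together with a short polytope-face argument, and to run the standard and quadrilateral cases in parallel.

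For the forward implication, assume $S$ is a standard vertex normal surface and that $\vrep{S}$ dominates an admissible vector $\mathbf{u}\in\R^{7n}$. The case $\mathbf{u}=\mathbf{0}$ is immediate, so I assume $\mathbf{u}\neq\mathbf{0}$. Domination forces the corresponding coordinate of $\vrep{S}$ to be positive whenever $u_i>0$, so $\epsilon=\min\{v_i/u_i : u_i>0\}$ is a well-defined positive number; set $\mathbf{y}=\vrep{S}-\epsilon\mathbf{u}$. Then $\mathbf{y}\geq\mathbf{0}$ by choice of $\epsilon$, $\mathbf{y}$ satisfies the standard matching equations by linearity, and $\mathbf{y}$ satisfies the quadrilateral constraints, because any non-zero quadrilateral coordinate of $\mathbf{y}$ forces the corresponding coordinate of $\vrep{S}$ to be non-zero, so two non-zero quadrilateral coordinates of $\mathbf{y}$ in a single tetrahedron would contradict the same constraint for $\vrep{S}$. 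The decomposition $\vrep{S}=\epsilon\mathbf{u}+\mathbf{y}$ is then a positive combination of admissible vectors (collapsing to $\vrep{S}=\epsilon\mathbf{u}$ if $\mathbf{y}=\mathbf{0}$), and Lemma~\ref{l-vertexsplit} forces $\mathbf{u}$ to be a multiple of $\vrep{S}$.

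For the converse, suppose $\vproj{S}$ is not a vertex of $\stdproj$. Let $F$ be the face of $\stdproj$ cut out by the coordinate hyperplanes $\{x_i=0\}$ for every index $i$ at which $\vproj{S}$ already vanishes. By construction $\vproj{S}$ lies in the relative interior of $F$, so $F$ must have positive dimension and therefore possesses at least one vertex $\mathbf{u}$ distinct from $\vproj{S}$; this $\mathbf{u}$ is automatically a vertex of $\stdproj$. By construction $\mathbf{u}$ has strictly more zero coordinates than $\vproj{S}$, so $\vproj{S}$ strictly dominates $\mathbf{u}$; moreover $\mathbf{u}$ inherits the quadrilateral constraints from $\vproj{S}$, since having at least as many zero quadrilateral coordinates cannot break those constraints. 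Hence $\mathbf{u}$ belongs to the standard solution set, and Corollary~\ref{c-solnisvertex} supplies a standard vertex normal surface $U$ with $\vproj{U}=\mathbf{u}$.

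The quadrilateral versions follow by running the identical argument in $\R^{3n}$, replacing $\vrep{\cdot}$ and $\stdproj$ throughout by $\qrep{\cdot}$ and $\quadproj$; admissibility, domination and the quadrilateral constraints all behave the same way and the citation of Lemma~\ref{l-vertexsplit} has a quadrilateral analogue already built in. The main step requiring care is the polytope-face claim in the converse---namely that $\vproj{S}$ lies in the relative interior of $F$, that $F$ is positive-dimensional precisely because $\vproj{S}$ is not a vertex, and that vertices of $F$ are vertices of the ambient polytope. These are standard facts, but they are the only substantive input beyond Lemma~\ref{l-vertexsplit}.
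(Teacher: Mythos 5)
Your proof is correct and follows essentially the same strategy as the paper's: the forward implication is reduced to Lemma~\ref{l-vertexsplit} via a two-term positive decomposition, and the converse proceeds by taking the minimal face $F$ of $\stdproj$ (resp.\ $\quadproj$) containing $\vproj{S}$ and selecting a vertex of $F$. The only differences are cosmetic---the paper produces its decomposition by extending the segment from $\mathbf{u}$ through $\vrep{S}$ to a point $\mathbf{w}=\vrep{S}+\epsilon(\vrep{S}-\mathbf{u})$ and writing $(1+\epsilon)\vrep{S}=\mathbf{w}+\epsilon\mathbf{u}$, whereas you contract toward $\mathbf{u}$ and split $\vrep{S}=\epsilon\mathbf{u}+\mathbf{y}$; and in the converse the paper obtains strictness by invoking the forward direction of the lemma, whereas you obtain it directly from the relative-interior characterisation of vertices of $F$. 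Both sets of choices are sound, and the face-lattice facts you flag at the end (minimal face, relative interior, vertices of a face are vertices of the polytope) are exactly the standard inputs being used.
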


As in Lemma~\ref{l-vertexsplit}, each half of this lemma is a stronger
version of the converse of the other.  While this makes the statement of
the lemma a little less transparent, it also makes both halves easier to
use in practice (as we will see later in this section).

\begin{proof}
    Again the proofs in standard and quadrilateral coordinates are
    identical; here we consider only standard coordinates.

    Suppose that $S$ is a standard vertex normal surface and that
    $\vrep{S}$ dominates $\mathbf{u}$ for some admissible
    $\mathbf{u}\in\R^{7n}$.  If $\mathbf{u}=\mathbf{0}$ then
    $\mathbf{u}$ is clearly a multiple of $\vrep{S}$, so assume that
    $\mathbf{u}\neq\mathbf{0}$.
    Let $\mathbf{w} = \vrep{S} + \epsilon (\vrep{S} - \mathbf{u})$
    for some small $\epsilon > 0$;
    that is, $\mathbf{w}$ is an extension of the line joining
    $\mathbf{u}$ and $\vrep{S}$, just beyond $\vrep{S}$.

    Because $\vrep{S}$ and $\mathbf{u}$ satisfy the standard matching
    equations, so does $\mathbf{w}$.  Because $\vrep{S}$ dominates
    $\mathbf{u}$, we can keep the coordinates of $\mathbf{w}$ non-negative
    by choosing $\epsilon$ sufficiently small.  Finally, because
    $\vrep{S}$ satisfies the quadrilateral constraints and
    $\mathbf{u}$ introduces no new non-zero coordinates, it follows
    that $\mathbf{w}$ satisfies the quadrilateral constraints also.
    Therefore $\mathbf{w}$ is an admissible vector.
    Since $(1 + \epsilon) \vrep{S} = \mathbf{w} + \epsilon \mathbf{u}$,
    we have from Lemma~\ref{l-vertexsplit} that $\mathbf{u}$ is a multiple of
    $\vrep{S}$.

    Now suppose that $S$ is not a standard vertex normal surface.
    Let $F$ be the minimal-dimensional face of the polytope $\stdproj$
    containing $\vproj{S}$, and let $\mathbf{u}$ be any vertex of $F$.
    We aim to show that $\mathbf{u} = \vproj{U}$ for some standard
    vertex normal surface $U$,
    and that $\vproj{S}$ strictly dominates $\mathbf{u}$.

    Consider any coordinate that is zero in $\vproj{S}$; without loss of
    generality let this be $q_{i,j}$ (though it could equally well be a
    triangular coordinate).  The hyperplane $q_{i,j}=0$ is a supporting
    hyperplane for $\stdproj$, and since it contains $\vproj{S}$ it must
    contain the entire minimal-dimensional face $F$.  Therefore the
    coordinate $q_{i,j}$ is zero at every vertex of $F$, including
    $\mathbf{u}$.

    Running through all such coordinates, we see that
    $\mathbf{u}$ is dominated by $\vproj{S}$; this domination
    also shows that $\mathbf{u}$ satisfies the quadrilateral constraints.
    Since our polytope is rational and $\mathbf{u}$ is a vertex
    it follows that $\mathbf{u}=\vproj{U}$ for some standard vertex normal
    surface $U$.

    Finally, because $S$ is not a standard vertex normal surface
    we have $\vproj{S} \neq \mathbf{u}$; the first part of this lemma
    then shows that $\mathbf{u}$ cannot dominate $\vproj{S}$,
    which means that $\mathbf{u}$ must be {\em strictly} dominated
    by $\vproj{S}$.
\end{proof}

One simple but useful consequence of Lemma~\ref{l-vertexpositions}
is the following.

\begin{corollary} \label{c-vtxiscanonical}
    Every standard vertex normal surface in $\tri$ is either
    (i)~canonical, or (ii)~consists of one or more copies of the link of
    a single vertex of $\tri$.  Moreover,
    the link of a single vertex of $\tri$ is always a standard vertex
    normal surface.
\end{corollary}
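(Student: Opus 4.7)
The plan is to deduce both halves of the corollary directly from the domination criterion of Lemma~\ref{l-vertexpositions}, together with the observation that the vector $\vrep{\ell(V)}$ has an especially constrained support: it is non-zero exactly on the four-or-more triangular coordinates surrounding $V$, and its non-zero entries all equal $1$.

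For the first statement, I would start with a standard vertex normal surface $S$ and assume $S$ is not canonical. By Definition~\ref{d-canonical-surface} this means $S$ contains at least one vertex linking component, say $\ell(V)$. Then every triangular coordinate of $\vrep{S}$ surrounding $V$ is at least~$1$, so $\vrep{S}$ dominates $\vrep{\ell(V)}$ (the only non-zero entries of $\vrep{\ell(V)}$ sit at those same triangular positions). Since $\vrep{\ell(V)}$ is admissible (by Theorem~\ref{t-admissible}, because $\ell(V)$ is an embedded normal surface) and $S$ is a vertex normal surface, Lemma~\ref{l-vertexpositions} forces $\vrep{\ell(V)}$ to be a positive rational multiple of $\vrep{S}$. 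Equivalently $\vrep{S}=k\cdot\vrep{\ell(V)}$ for some positive rational $k$, and because both are non-negative integer vectors with $\vrep{\ell(V)}$ having entries in $\{0,1\}$, $k$ must be a positive integer. Hence $S$ is exactly $k$ parallel copies of $\ell(V)$, yielding case~(ii).

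For the second statement, I would prove the contrapositive via the converse half of Lemma~\ref{l-vertexpositions}. If $\ell(V)$ were not a standard vertex normal surface, the lemma would supply a standard vertex normal surface $U$ with $\vrep{\ell(V)}$ strictly dominating $\vrep{U}$; in particular $\vrep{U}$ is non-zero only on the triangular coordinates surrounding~$V$, with strictly fewer supports. I would then invoke the standard matching equations: along any face separating two adjacent tetrahedra meeting $V$, the equation of Definition~\ref{d-matching-std} reduces (because all relevant quadrilateral coordinates of $\vrep{U}$ vanish) to $t_{i,a}=t_{j,c}$, so adjacent triangular coordinates around $V$ agree. Because the vertex link is connected (a disc or a $2$--sphere), these equations propagate around all of $\ell(V)$ and force all of those triangular coordinates of $\vrep{U}$ to take a single common value $c\geq 0$. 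Thus $\vrep{U}=c\cdot\vrep{\ell(V)}$, which contradicts the strict domination unless $c=0$; but $c=0$ gives $\vrep{U}=\mathbf{0}$, whose projective image is the zero vector and cannot be a vertex of the polytope $\stdproj\subset J^{7n}$.

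The main subtlety lies in this last paragraph: one has to verify that strict domination by $\vrep{\ell(V)}$ really does kill every non-triangular-around-$V$ coordinate of $\vrep{U}$, and then that the vanishing of those particular quadrilateral coordinates is precisely what is needed to collapse the matching equations into the simple equalities that let connectivity of $\ell(V)$ do the rest of the work. Once this is set up cleanly, both halves fall out of Lemma~\ref{l-vertexpositions} with no further polytope geometry required.
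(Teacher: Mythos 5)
Your proposal is correct and follows essentially the same route as the paper: both halves are deduced from the two directions of Lemma~\ref{l-vertexpositions}, with the first half using domination of $\vrep{\ell(V)}$ by $\vrep{S}$ and the second using strict domination plus propagation of the standard matching equations (with vanishing quadrilateral coordinates) around the connected vertex link. Your version spells out the $c>0$ versus $c=0$ dichotomy slightly more explicitly than the paper (which simply concludes $U$ is empty once some triangular coordinate is zero), but the substance and key lemma are identical.
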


\begin{proof}
    Let $S$ be a standard vertex normal surface in $\tri$, and suppose
    that $S$ is not canonical.  Then $S$ contains at least one vertex
    linking component; let this be the link $\ell(V)$.  It follows
    that $\vrep{S}=\ell(V)+\mathbf{u}$ for some non-negative
    $\mathbf{u}\in\R^{7n}$.  Thus $\vrep{S}$ dominates
    $\ell(V)$, and from Lemma~\ref{l-vertexpositions} we have that
    $\vrep{S}$ is a multiple of the vertex link $\ell(V)$.

    Now consider a single vertex link $\ell(V)$.  If this vertex link is
    not a standard vertex normal surface, then from
    Lemma~\ref{l-vertexpositions} there is some non-empty
    embedded normal surface
    $U$ for which $\ell(V)$ strictly dominates $\vrep{U}$.  Thus the
    surface $U$ contains only triangular discs surrounding the vertex
    $V$, and moreover at least one such triangular disc type does not
    appear in $U$ at all.

    By following the standard matching equations around the vertex $V$
    we find that, because {\em some} triangular coordinate surrounding $V$
    is zero in $\vrep{U}$, then {\em all} such coordinates must be zero in
    $\vrep{U}$.  Thus $U$ is the empty surface, giving a contradiction.
    %
\end{proof}

We proceed now to the key result that underpins the
standard-to-quadrilateral conversion algorithm.

\begin{lemma} \label{l-quadisstd}
    The canonical part of every quadrilateral vertex normal surface
    in $\tri$ is also a standard vertex normal surface in $\tri$.
\end{lemma}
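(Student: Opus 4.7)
The plan is to work in contrapositive form via the domination criterion of Lemma~\ref{l-vertexpositions}. Let $S$ be a quadrilateral vertex normal surface and set $S' = \kappa_s(S)$. Since $\qrep{S'} = \qrep{S}$ (vertex links contribute no quadrilaterals) and $S'$ is canonical, we have $\vrep{S'} = \vmap{\qrep{S}}$. Note also that $\vrep{S'} \neq \mathbf{0}$: otherwise $S$ would consist entirely of vertex links, forcing $\qrep{S} = \mathbf{0}$, which cannot equal a vertex of the polytope $\quadproj \subset J^{3n}$.

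To verify the domination criterion for $S'$, suppose $\vrep{S'}$ dominates some admissible $\mathbf{u} \in \R^{7n}$, and aim to show $\mathbf{u}$ is a scalar multiple of $\vrep{S'}$. I would first observe that $\mathbf{u}$ is itself canonical: by Lemma~\ref{l-canonical-to-canonical}, $\vrep{S'}$ has at least one vanishing triangular coordinate surrounding each vertex of $\tri$, and domination forces the same coordinate in $\mathbf{u}$ to vanish, so $\mathbf{u} \in \vspaceac$.

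Next I would push everything through the quadrilateral projection. Since $\qmapsymbol$ simply drops triangular coordinates, domination in $\R^{7n}$ implies $\qrep{S} = \qmap{\vrep{S'}}$ dominates $\qmap{\mathbf{u}}$, and $\qmap{\mathbf{u}}$ lies in $\qspacea$ (Lemma~\ref{l-mappings} confirms that $\qmapsymbol$ carries admissible vectors to admissible vectors). Because $S$ is a quadrilateral vertex normal surface with $\qrep{S} \neq \mathbf{0}$, Lemma~\ref{l-vertexpositions} applied in quadrilateral coordinates yields $\qmap{\mathbf{u}} = c\,\qrep{S}$ for some $c \geq 0$. Applying canonical extension, using its scalar-preservation property from Lemma~\ref{l-mappings}(iii) and the fact that $\vmapsymbol \circ \qmapsymbol$ is the identity on $\vspaceac$, gives
\[ \mathbf{u} \;=\; \vmap{\qmap{\mathbf{u}}} \;=\; c\,\vmap{\qrep{S}} \;=\; c\,\vrep{S'}, \]
as required. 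A final invocation of Lemma~\ref{l-vertexpositions} in standard coordinates then concludes that $S'$ is a standard vertex normal surface.

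The main obstacle I anticipate is ensuring the pieces glue cleanly: specifically, verifying that $\mathbf{u}$ inherits canonicity from the domination relation (which is what lets us apply $\vmapsymbol$ and recover $\mathbf{u}$ on the nose rather than up to a vertex-link correction), and confirming that admissibility really is preserved by $\qmapsymbol$ so that the quadrilateral-coordinate version of Lemma~\ref{l-vertexpositions} is applicable. Both are mild, but they are the points where a careless proof could go astray.
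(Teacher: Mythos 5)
Your proof is correct, but it follows a genuinely different route from the paper's. The paper argues by contradiction via Lemma~\ref{l-vertexsplit}: assuming $\kappa_s(S)$ is not a standard vertex, it produces a non-trivial decomposition $\vrep{\kappa_s(S)}=\alpha\,\vrep{U}+\beta\,\vrep{W}$, observes via Lemma~\ref{l-canonical-linear} that $U$ and $W$ must then be canonical, pushes the identity through $\qmapsymbol$ to quadrilateral coordinates, invokes $S$ being a quadrilateral vertex to force $\qrep{U},\qrep{W}$ to be multiples of $\qrep{S}$, and then lifts back through $\vmapsymbol$ to reach a contradiction. You instead work with the domination criterion of Lemma~\ref{l-vertexpositions}: take any admissible $\mathbf{u}$ dominated by $\vrep{\kappa_s(S)}$, observe that domination by a canonical vector transfers canonicity to $\mathbf{u}$ (a small point not needed in the paper's argument, but it is exactly what lets $\vmapsymbol\circ\qmapsymbol$ recover $\mathbf{u}$ exactly), push through $\qmapsymbol$, apply the quadrilateral-coordinate version of Lemma~\ref{l-vertexpositions}, and lift back. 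Both routes exploit the same commutative-diagram machinery (Lemma~\ref{l-mappings}, the scalar-preserving property of $\vmapsymbol$); the paper's version handles a two-vector decomposition where yours handles a single dominated vector, which is marginally cleaner. One detail you elide: the concluding ``final invocation'' of Lemma~\ref{l-vertexpositions} is contrapositive of its second half and needs the observation that a strictly dominated vector cannot be a positive scalar multiple, and that a standard vertex surface has non-zero vector representation; these are routine but worth stating explicitly.
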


\begin{proof}
    Let $S$ be a quadrilateral vertex normal surface, and suppose
    that the canonical part $\kappa_s(S)$ is {\em not} a standard vertex
    normal surface.  Then from
    Lemma~\ref{l-vertexsplit}, there exist embedded normal surfaces
    $U$ and $W$ where
    $\vrep{\kappa_s(S)} = \alpha\,\vrep{U} + \beta\,\vrep{W}$ for
    $\alpha,\beta > 0$ and where neither $\vrep{U}$ nor $\vrep{W}$
    is a rational multiple of $\vrep{\kappa_s(S)}$.
    Because $\kappa_s(S)$ is canonical,
    it follows from Lemma~\ref{l-canonical-linear} that both $U$ and $W$ are
    canonical also.

    Using the fact that the quadrilateral projection $\pi$ is linear
    and that $\qmapsymbol \cdot \mathbf{v} = \mathbf{q}$
    (Lemma~\ref{l-mappings}), it follows that the analogous relationship
    $\qrep{\kappa_s(S)} = \alpha\,\qrep{U} + \beta\,\qrep{W}$
    must hold in quadrilateral coordinates.  Since
    $\mathbf{q} \cdot \kappa_s = \mathbf{q}$, this simplifies to
    $\qrep{S} = \alpha\,\qrep{U} + \beta\,\qrep{W}$.

    Finally, because $S$ is a
    quadrilateral vertex normal surface, Lemma~\ref{l-vertexsplit}
    shows that both $\qrep{U}$ and $\qrep{W}$ must be rational multiples of
    $\qrep{S}=\qrep{\kappa_s(S)}$.
    Since the canonical extension $\vmapsymbol$ preserves scalar multiplication
    and $\vmapsymbol \cdot \mathbf{q} = \mathbf{v}$ on canonical
    surfaces (Lemma~\ref{l-mappings} again), this implies that
    both $\vrep{U}$ and $\vrep{W}$ are rational multiples of
    $\vrep{\kappa(S)}$, a contradiction.
\end{proof}

We close this section with our first algorithm for converting between
solution sets: the conversion from the standard solution set
to the quadrilateral solution set.  This is the easier direction in all
respects---the algorithm is conceptually simple (we use
Lemma~\ref{l-quadisstd} to find potential solutions and
Lemma~\ref{l-vertexpositions} to verify them),
it is simple to implement, and it has a guaranteed small polynomial
running time\footnote{Of course this must be polynomial in not just $n$ but
also the size of the input, i.e., the standard solution set.  There are
families of triangulations for which the standard solution set is known to
have size exponential in $n$; see \cite{burton09-extreme} for some examples.}
(which is unusual for vertex enumeration problems).

\begin{algorithm} \label{a-stdtoquad}
    Suppose we are given the standard solution set
    for the triangulation $\tri$, and that this standard solution
    set consists of the $k$ vectors
    $\mathbf{v}_1,\ldots,\mathbf{v}_k \in \R^{7n}$.
    Then the following algorithm computes the quadrilateral solution
    set for $\tri$, and does so in $O(n k^2)$ time.
    \begin{enumerate}[1.]
        \item \label{en-stdtoquad-proj}
        Compute the quadrilateral projections
        $\qmap{\mathbf{v}_1},\ldots,\qmap{\mathbf{v}_k}$;
        recall that this merely involves removing the triangular
        coordinates from each vector.  Throw away any zero vectors
        that result, and label the remaining non-zero vectors
        $\mathbf{q}_1,\ldots,\mathbf{q}_{k'} \in \R^{3n}$.

        \item \label{en-stdtoquad-pairs}
        Begin with an empty list of vectors $L$.
        For each $i=1,\ldots,k'$, test whether the vector
        $\mathbf{q}_i$ dominates any other $\mathbf{q}_j$ for $i \neq j$.
        If not, insert the projective image $\proj{\mathbf{q}_i}$ into
        the list $L$.

        \item \label{en-stdtoquad-done}
        Once step~\enref{en-stdtoquad-pairs} is complete, the list
        $L$ holds the complete quadrilateral solution set for $\tri$.
    \end{enumerate}
\end{algorithm}

\begin{proof}
    Our first task is to prove the algorithm correct.  We approach this by
    (i)~showing that every member of the quadrilateral solution
    set does appear in the final list $L$, and then (ii)~showing that any
    other vector does not appear in the final list $L$.
    \begin{itemize}
        \item
        Suppose $\mathbf{w}\in\R^{3n}$ is a member of the quadrilateral
        solution set for $\tri$.  Then $\mathbf{w}$ is non-zero, and
        furthermore
        $\mathbf{w}=\qproj{S}=\qproj{\kappa_s(S)}$ for some
        quadrilateral vertex normal surface $S$.  From Lemma~\ref{l-quadisstd},
        $\kappa_s(S)$ is also a {\em standard} vertex normal surface, and so
        $\vproj{\kappa_s(S)}$ is a member of the standard solution set.
        Therefore $\vproj{\kappa_s(S)}=\mathbf{v}_i$ for some $i$, whereupon
        Lemma~\ref{l-mappings} gives us
        $\mathbf{w}=\qproj{\kappa_s(S)}=\proj{\qmap{\vrep{\kappa_s(S)}}}
        =\proj{\qmap{\mathbf{v}_i}}$.  That is, $\mathbf{w}$ appears in
        step~\enref{en-stdtoquad-proj} as
        $\mathbf{w}=\mathbf{q}_{i'}$ for some $i'$.

        Suppose now that $\mathbf{w}$ does not appear in the final
        list $L$.  This can only be because
        $\mathbf{q}_{i'}$ dominates
        $\mathbf{q}_{j'}$ for some $j' \neq i'$.  From
        step~\enref{en-stdtoquad-proj} we know
        that $\mathbf{q}_{j'}=\qmap{\mathbf{v}_j}$
        for some vector $\mathbf{v}_j \neq \mathbf{v}_i$ in the standard
        solution set.  Moreover, neither $\mathbf{v}_i$ nor
        $\mathbf{v}_j$ is a multiple of a vertex link (otherwise
        $\mathbf{q}_{i'}$ or $\mathbf{q}_{j'}$ would be zero); therefore
        Corollary~\ref{c-vtxiscanonical} shows that both
        $\mathbf{v}_i$ and $\mathbf{v}_j$ are canonical, and so
        $\mathbf{v}_i=\vmap{\mathbf{q}_{i'}}$ and
        $\mathbf{v}_j=\vmap{\mathbf{q}_{j'}}$.

        Because $\mathbf{q}_{i'}$ dominates $\mathbf{q}_{j'}$, it follows
        from Lemma~\ref{l-vertexpositions} that $\mathbf{q}_{j'}$ is a
        multiple of $\mathbf{q}_{i'}$.  Since $\vmapsymbol$ preserves scalar
        multiplication,
        $\mathbf{v}_i=\vmap{\mathbf{q}_{i'}}$ is also a multiple of
        $\mathbf{v}_j=\vmap{\mathbf{q}_{j'}}$.  Finally, since
        $\mathbf{v}_i$ and $\mathbf{v}_j$ both belong to the standard
        solution set, their coordinates must both sum to one and we obtain
        $\mathbf{v}_i=\mathbf{v}_j$, a contradiction.

        \item
        Suppose now that $\mathbf{w}\in\R^{3n}$ is {\em not} a member of the
        quadrilateral solution set for $\tri$.
        From Lemma~\ref{l-vertexpositions} there is some quadrilateral vertex
        normal surface $U$ for which $\mathbf{w}$ strictly dominates
        $\qrep{U}$, and from the previous argument the projective image
        $\qproj{U}$ appears in step~\enref{en-stdtoquad-proj}
        as some $\mathbf{q}_{j'}$.  This domination ensures that
        $\mathbf{w}$ is tossed away in
        step~\enref{en-stdtoquad-pairs}, and so
        does not appear in the final list $L$.
    \end{itemize}

    We see then
    that $L$ contains precisely the quadrilateral solution
    set for $\tri$ as claimed.  Note
    that step~\enref{en-stdtoquad-pairs} ensures that $L$ contains no
    duplicate vectors (i.e., that $L$ is a ``true set''); otherwise each would
    dominate the other.  We finish by observing that all vector operations
    take $O(n)$ time and that
    steps~\enref{en-stdtoquad-proj} and~\enref{en-stdtoquad-pairs} require
    $O(k)$ and $O(k^2)$ vector operations respectively, giving a running
    time of $O(nk^2)$ in total.
\end{proof}

\section{The Hard Direction: Quadrilateral to Standard} \label{s-quadtostd}

We come now to our second conversion algorithm for solution sets:
the conversion from the quadrilateral solution set to the standard
solution set.  Although this is the more difficult conversion, with a
messy implementation and a worst-case exponential running time,
it is ultimately the more useful.  In particular:
\begin{itemize}
    \item It gives us genuinely new surfaces, which
    Lemma~\ref{l-quadisstd} shows is not true in the reverse direction.
    This means that we can potentially learn new information
    about the underlying triangulation and $3$--manifold.
    \item It forms the basis for a new
    {\em enumeration} algorithm to generate the standard solution set,
    which runs orders of magnitude faster than the current state-of-the-art.
\end{itemize}

We begin with some prerequisite tools in Section~\ref{s-quadtostd-cones},
where we introduce some additional vector maps and then
discuss polyhedral cones and their interaction with the quadrilateral
constraints.
Following this, Section~\ref{s-quadtostd-algorithm} is devoted to
presenting and proving the quadrilateral-to-standard solution set
conversion algorithm (Algorithm~\ref{a-quadtostd}).
We finish in Section~\ref{s-quadtostd-consequences}
with a brief discussion of time complexity
(Conjecture~\ref{cj-complexity}) and
the new enumeration algorithm described above (Algorithm~\ref{a-enumstd}).
As discussed back in the introduction, this final enumeration
algorithm is the real ``end product'' of this paper, and we devote all of
Section~\ref{s-expt} to testing its performance in a practical setting.

As before, we assume throughout this section that we are working with a
compact $3$--manifold triangulation $\tri$ built from $n$ tetrahedra.

\subsection{Vector Maps and Polyhedral Cones}
\label{s-quadtostd-cones}

To present and prove the quadrilateral-to-standard conversion algorithm
(Algo\-rithm~\ref{a-quadtostd}), we need to call upon two new families
of vector maps, both of which involve the vertices of the
triangulation~$\tri$.

\begin{defn}[Partial Canonical Part] \label{d-partial-canonical}
    Let the vertices of $\tri$ be labelled
    $V_1,\ldots,V_m$, and let $\mathbf{w}$ be any vector in $\R^{7n}$.
    For each $i=1,\ldots,m$, the {\em $i$th partial
    canonical part} of $\mathbf{w}$ is denoted $\canp{i}{\mathbf{w}}$
    and is defined as follows.
    Let $\lambda \in \R$ be the largest scalar for which
    all of the coordinates of $\mathbf{w} - \lambda \ell(V_i)$ that
    correspond to triangular disc types surrounding $V_i$ are non-negative.
    Then we define $\canp{i}{\mathbf{w}} = \mathbf{w} - \lambda \ell(V_i)$.
\end{defn}

Essentially $\canp{i}{\mathbf{w}}$ is a ``restricted'' canonical part of
$\mathbf{w}$ where we only allow copies of the vertex link $\ell(V_i)$ to
be added or subtracted.  It is simple to see that applying this
procedure to all vertices gives the usual canonical part $\kappa_v$,
that is, $\kappa_v = \kappa_v^{(1)} \circ \ldots \circ \kappa_v^{(m)}$.
Like $\kappa_v$, the partial maps $\kappa_v^{(i)}$ are not linear but do
preserve scalar multiplication.

\begin{defn}[Truncation] \label{d-truncation}
    Let the vertices of $\tri$ be labelled
    $V_1,\ldots,V_m$, and let $\mathbf{w}$ be any vector in $\R^{7n}$.
    For each $i=0,\ldots,m$, the {\em $i$th truncation} of $\mathbf{w}$
    is denoted $\tau_i(\mathbf{w})$, and is defined as follows.
    We first locate all coordinates in $\mathbf{w}$ that correspond to
    triangular disc types surrounding the vertices $V_{i+1},\ldots,V_m$.
    Then $\tau_i(\mathbf{w})$ is obtained from $\mathbf{w}$ by setting
    each of these coordinates to zero.

    For convenience, if $S \subseteq \R^{7n}$ is any set of vectors then
    we let $\tau_i(S)$ denote the corresponding set of $i$th truncations;
    that is, $\tau_i(S) = \{ \tau_i(\mathbf{w})\,|\,\mathbf{w} \in S\}$.
\end{defn}

The $0$th truncation $\tau_0(\mathbf{w})$ is most severe, setting all
triangular coordinates in $\mathbf{w}$ to zero.  At the other extreme,
the $m$th truncation has no effect whatsoever, with
$\tau_m(\mathbf{w})=\mathbf{w}$.  Each truncation map is linear,
and it is clear that $\tau_i \circ \tau_j = \tau_{\min(i,j)}$.
Note that truncation does not preserve admissibility, since
$\tau_i(\mathbf{w})$ might not satisfy the standard matching equations
even if $\mathbf{w}$ does.

In general it is impossible to undo truncations precisely.
However, for admissible vectors the errors are controllable,
as seen in the following result.

\begin{lemma} \label{l-trunc-error}
    Consider any two admissible vectors $\mathbf{x},\mathbf{y} \in \R^{7n}$.
    If $\tau_{i-1}(\mathbf{x}) = \tau_{i-1}(\mathbf{y})$, then
    $\tau_i(\mathbf{x}) = \tau_i(\mathbf{y}) + \mu \ell(V_i)$ for some
    $\mu \in \R$.
\end{lemma}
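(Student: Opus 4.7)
The plan is to exploit the fact that the hypothesis forces $\mathbf{x}$ and $\mathbf{y}$ to agree in every coordinate except the triangular coordinates surrounding $V_i$, and then to use the standard matching equations (together with the connectedness of the vertex link $\ell(V_i)$) to force those remaining differences to be identical.

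More precisely, I would first unwind the definitions of $\tau_{i-1}$ and $\tau_i$. The hypothesis $\tau_{i-1}(\mathbf{x}) = \tau_{i-1}(\mathbf{y})$ says that $\mathbf{x}$ and $\mathbf{y}$ agree on every quadrilateral coordinate and on every triangular coordinate surrounding $V_1, \ldots, V_{i-1}$. Consequently $\tau_i(\mathbf{x})$ and $\tau_i(\mathbf{y})$ can only disagree on the triangular coordinates surrounding $V_i$. Let $t_{j_1, a_1}, \ldots, t_{j_r, a_r}$ be precisely those coordinates, and write $\delta_s = t_{j_s, a_s}(\mathbf{x}) - t_{j_s, a_s}(\mathbf{y})$ for each $s$. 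Since $\ell(V_i)$ has value $1$ at exactly these coordinates and $0$ elsewhere, the desired conclusion $\tau_i(\mathbf{x}) = \tau_i(\mathbf{y}) + \mu\,\ell(V_i)$ is equivalent to the claim that $\delta_1 = \delta_2 = \cdots = \delta_r$.

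The heart of the proof is then to establish this common-difference claim. Because $\mathbf{x}$ and $\mathbf{y}$ are both admissible, each satisfies all standard matching equations. For any two triangular disc types $t_{j,a}$ and $t_{j',a'}$ around $V_i$ that are adjacent in the sense of Algorithm~\ref{a-quaddfs} (their boundary arcs are parallel across a common face), there is a matching equation of the form $t_{j,a} + q_{j,b} = t_{j',a'} + q_{j',b'}$. Subtracting the instance for $\mathbf{y}$ from the instance for $\mathbf{x}$, the $q$-terms cancel because $\mathbf{x}$ and $\mathbf{y}$ agree on all quadrilateral coordinates, leaving $t_{j,a}(\mathbf{x}) - t_{j,a}(\mathbf{y}) = t_{j',a'}(\mathbf{x}) - t_{j',a'}(\mathbf{y})$. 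Thus adjacent triangular coordinates around $V_i$ contribute identical differences.

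To conclude, I would invoke the same observation used in the correctness proof of Algorithm~\ref{a-quaddfs}: the link $\ell(V_i)$ is connected, so the adjacency graph on triangular disc types surrounding $V_i$ is connected. Propagating the equality above along this graph gives $\delta_1 = \cdots = \delta_r$, and setting $\mu$ to this common value finishes the proof. I do not expect any genuine obstacle here; the only subtlety to verify carefully is that the adjacencies across tetrahedron faces really do connect all triangular disc types around $V_i$, but this is exactly the fact already exploited in Algorithm~\ref{a-quaddfs}.
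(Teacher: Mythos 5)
Your proof is correct, but it takes a different route from the paper. The paper's own argument is a short high-level one: since $\tau_{i-1}$ preserves quadrilateral coordinates, the hypothesis yields $\qmap{\mathbf{x}}=\qmap{\mathbf{y}}$; then the bijection $\qmapsymbol,\vmapsymbol\co\vspaceac\rightleftharpoons\qspacea$ from Lemma~\ref{l-mappings} gives $\kappa_v(\mathbf{x})=\kappa_v(\mathbf{y})$, so $\mathbf{x}-\mathbf{y}$ is a linear combination of vertex links, and applying $\tau_{i-1}$ to kill the coefficients of $\ell(V_1),\dots,\ell(V_{i-1})$ and $\tau_i$ to kill the rest leaves only the $\ell(V_i)$ term. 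You instead re-derive the underlying combinatorial fact from scratch: the differences on the triangular coordinates around $V_i$ are forced to be equal by propagating the standard matching equations across adjacent triangle types (the quadrilateral terms cancel since $\mathbf{x}$ and $\mathbf{y}$ agree there), using connectedness of $\ell(V_i)$ to reach every triangle type. This is exactly the argument the paper packaged once, in the proof of Algorithm~\ref{a-quaddfs}, to justify well-definedness of $\vmapsymbol$ and $\kappa_v$; the paper then reuses that machinery here, while you unfold it inline. Your approach is more self-contained and elementary, at the cost of duplicating the connectedness argument; the paper's is shorter and modular but leans on the commutative-diagram bookkeeping of Lemma~\ref{l-mappings} and the explicit formula for $\kappa_v$ following Definition~\ref{d-canonical-part}. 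Both are sound, and your identification of connectedness of $\ell(V_i)$ as the crucial geometric input is exactly right.
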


\begin{proof}
    Because $\tau_{i-1}$ does not affect any quadrilateral coordinates, we
    have $\qmap{\mathbf{x}} = \qmap{\mathbf{y}}$.  With
    Lemma~\ref{l-mappings} we can convert this into
    $\kappa_v(\mathbf{x}) = \kappa_v(\mathbf{y})$,
    whereupon the result is a simple consequence of
    Definition~\ref{d-canonical-part}.
\end{proof}

For the remainder of this section we focus on polyhedral cones.
These are used heavily in the proof of Algorithm~\ref{a-quadtostd},
and we concentrate in particular on their interaction with the
quadrilateral constraints.

\begin{defn}[Polyhedral Cone] \label{d-cone}
    A {\em polyhedral cone} in $\R^d$ is an intersection of
    finitely many closed half-spaces in $\R^d$, all of whose
    bounding hyperplanes pass through the origin.

    A {\em pointed polyhedral cone} in $\R^d$ is a polyhedral cone in $\R^d$
    for which the origin is an extreme point.  Equivalently, it is a
    polyhedral cone in $\R^d$ that has a supporting
    hyperplane meeting it only at the origin.
\end{defn}

It is clear that every polyhedral cone $C$ is convex and
closed under non-negative scalar
multiplication (that is, $\mathbf{x},\mathbf{y} \in C$ implies
$\lambda \mathbf{x} + \mu \mathbf{y} \in C$ for all $\lambda,\mu \geq 0$).
An example of a polyhedral cone that is not pointed is the infinite prism
$\{\mathbf{x}\in\R^3\,|\,x_1,x_2 \geq 0\}$, for which any supporting
hyperplane containing $\mathbf{0}$ must also contain the entire line
$x_1=x_2=0$.

\begin{figure}[htb]
\centerline{\includegraphics[scale=0.6]{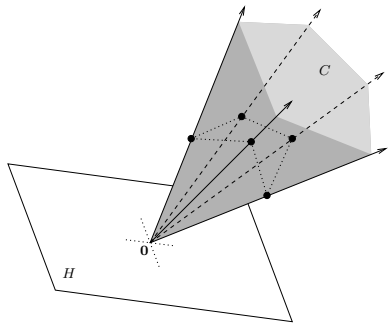}}
\caption{A pointed polyhedral cone with five basis vectors}
\label{fig-cone}
\end{figure}

\begin{defn}[Basis] \label{d-basis}
    Let $S$ be any set of vectors in $\R^d$.  By a {\em basis} for $S$,
    we mean a subset of vectors $B \subset S$ for which
    \begin{enumerate}[(i)]
        \item \label{en-basis-comb}
        every vector of $S$ can be expressed as a non-negative linear
        combination of vectors in $B$;
        \item \label{en-basis-remove}
        if any vector is removed from $B$ then
        property~(\enref{en-basis-comb}) no longer holds.
    \end{enumerate}
    It is straightforward to see that we can replace
    (\enref{en-basis-remove}) with the equivalent property
    \begin{itemize}
        \item[($\mathrm{\enref{en-basis-remove}}'$)]
        no vector in $B$ can be expressed as a non-negative
        linear combination of the others.
    \end{itemize}
\end{defn}

Although our definition of a basis is designed with polyhedral cones
in mind, it is deliberately broad; this is because we will
need to apply it not only to polyhedral cones but also to non-convex sets,
such as the {\em semi-admissible parts} to be defined shortly.
Note that for general sets $S$, property~(\enref{en-basis-comb}) does not
work in reverse---there might well be non-negative linear combinations
of vectors in $B$ that are not elements of the set $S$.

For a pointed polyhedral cone $C$, the vectors in a basis correspond to the
edges of the cone; these edges are also known as {\em extremal rays} of $C$.
Figure~\ref{fig-cone} illustrates a pointed polyhedral cone $C$ with a
supporting
hyperplane $H$ as described by Definition~\ref{d-cone}; the five points
marked in black together form a basis for $C$.  The basis for a pointed
polyhedral cone is essentially unique and can be used to reconstruct the
cone, as noted by the following well known results.

\begin{lemma} \label{l-basis-unique}
    Every polyhedral cone $C$ has a finite basis.  Moreover, if $B$ and
    $B'$ are both bases for a {\em pointed} polyhedral cone $C$,
    then there is a one-to-one
    correspondence between $B$ and $B'$ that takes each vector to a
    positive scalar multiple of itself.
\end{lemma}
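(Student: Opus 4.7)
The plan is to prove the two assertions separately: existence of a finite basis for every polyhedral cone $C \subseteq \R^d$, and uniqueness of the basis up to positive scaling when $C$ is pointed.

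For existence, I would appeal to the Minkowski-Weyl theorem for polyhedral cones, which asserts that any finite intersection of closed half-spaces through the origin is finitely generated by non-negative combinations. This is a classical result, provable for instance by Fourier-Motzkin elimination on the defining inequalities, by induction on dimension, or (in keeping with the algorithmic spirit of this paper) by processing the defining half-spaces one at a time via the double description method itself. In the pointed case one has a particularly clean geometric argument: push the supporting hyperplane of Definition~\ref{d-cone} slightly off the origin to obtain an affine hyperplane that meets $C$ in a bounded convex polytope, whose finitely many vertices then rescale to a finite basis of $C$.

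For uniqueness in the pointed case, the key step is to characterise basis vectors intrinsically. Call a non-zero $\mathbf{v} \in C$ \emph{extremal} if every decomposition $\mathbf{v}=\mathbf{x}+\mathbf{y}$ with $\mathbf{x},\mathbf{y} \in C$ forces both summands to be non-negative scalar multiples of $\mathbf{v}$. I would then establish two reciprocal claims. First, every $\mathbf{b} \in B$ is extremal: if $\mathbf{b}=\mathbf{x}+\mathbf{y}$ with $\mathbf{x},\mathbf{y} \in C$ but neither a scalar multiple of $\mathbf{b}$, then expanding $\mathbf{x}$ and $\mathbf{y}$ through $B$ and collecting the $\mathbf{b}$-coefficient produces an expression for $\mathbf{b}$ as a non-negative combination of the remaining basis vectors, contradicting the minimality property of $B$; pointedness is used here to prevent cancellation in the residual terms. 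Second, every extremal vector $\mathbf{v} \in C$ is a positive scalar multiple of some $\mathbf{b} \in B$: writing $\mathbf{v}=\sum \lambda_i \mathbf{b}_i$ with $\lambda_i \geq 0$ and applying the extremality condition to the split $\mathbf{v}=\lambda_1 \mathbf{b}_1+\sum_{i \geq 2}\lambda_i \mathbf{b}_i$ forces either $\lambda_1=0$ or $\mathbf{b}_1$ to be proportional to $\mathbf{v}$; iterating eliminates all but one $\mathbf{b}_i$. Together these claims set up a canonical bijection between any basis $B$ and the set of extremal rays of $C$, immediately yielding the required correspondence between two bases.

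The principal obstacle is the rigorous justification of existence, since the uniqueness argument reduces to a clean algebraic manipulation once extremality is in hand. However, existence is the classical Minkowski-Weyl theorem, so I would cite a standard polytope-theory reference rather than reproduce the proof. The other delicate point is the use of pointedness in showing that basis vectors are extremal: for a cone containing a line (such as the prism $\{\mathbf{x} \in \R^3 \,|\, x_1,x_2 \geq 0\}$), vectors along the lineality direction fail to be extremal in the above sense, and correspondingly distinct bases can differ by more than positive scaling---explaining why the uniqueness half of the lemma is restricted to pointed cones.
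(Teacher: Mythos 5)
The paper does not give a proof of this lemma; it is stated, together with Lemma~\ref{l-basis-construct}, as a ``well known result'' and left to the literature. Your argument is the standard one and, as far as I can tell, it is correct: Minkowski--Weyl (or, for the pointed case, the bounded-cross-section argument you sketch) supplies a finite generating set, which can be thinned to a minimal one, and the uniqueness half comes from identifying basis vectors with the extremal rays of a pointed cone.

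One observation worth making explicit: your Claim~1 (every basis vector is extremal) is essentially the content of the paper's own Lemma~\ref{l-basis-lc}, which the paper proves later by exactly the device you gesture at---a supporting hyperplane through the origin with all basis vectors strictly on one side, forcing the coefficients in any non-negative representation of $\mathbf{0}$ to vanish. If you invoke Lemma~\ref{l-basis-lc} directly, Claim~1 becomes a two-line corollary: decompose $\mathbf{b}=\mathbf{x}+\mathbf{y}$, expand both summands through $B$, sum the coefficients, and apply Lemma~\ref{l-basis-lc} to conclude that the combined coefficients are trivial, whence $\mathbf{x}$ and $\mathbf{y}$ are both non-negative multiples of $\mathbf{b}$. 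This removes the hand-waving around ``pointedness prevents cancellation in the residual terms'' and ties your proof to machinery the paper already develops. Your Claim~2 and the resulting bijection between bases and extremal rays are fine as written; the only further point you might wish to record is that the map $B\to\{\text{extremal rays}\}$ is injective because two distinct basis vectors cannot be positive multiples of one another, by the minimality clause of Definition~\ref{d-basis}. With that noted, the uniqueness statement follows by composing the two bijections $B\to\{\text{extremal rays}\}\to B'$.
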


\begin{lemma} \label{l-basis-construct}
    Let $B \subset \R^d$ be a finite set of vectors for which
    \begin{enumerate}[(i)]
        \item no element of $B$ can be expressed as a non-negative
        linear combination of the others;
        \item there is some hyperplane $H \subset \R^d$ passing through
        $\mathbf{0}$ for which every vector of $B$ lies strictly to the same
        side of $H$ (in particular, none of these vectors lie within $H$).
    \end{enumerate}
    Then the set of all non-negative linear combinations of vectors in
    $B$ forms a pointed polyhedral cone with $B$ as its basis.
\end{lemma}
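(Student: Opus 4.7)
The plan is to let $C = \{\sum_{b \in B} \lambda_b \cdot b \mid \lambda_b \geq 0\}$ denote the conic hull of $B$ and verify, in turn, that $C$ is polyhedral, that $C$ is pointed, and that $B$ is a basis for $C$ in the sense of Definition~\ref{d-basis}.

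First I would dispatch pointedness using assumption~(ii) directly. Let $\mathbf{n} \in \R^d$ be a nonzero normal vector to the hyperplane $H$, chosen so that $\mathbf{n} \cdot b > 0$ for every $b \in B$ (possible since all vectors of $B$ lie strictly to one side of $H$, and in particular none lies on $H$). For any $\mathbf{x} = \sum \lambda_b b \in C$ with each $\lambda_b \geq 0$, we have $\mathbf{n} \cdot \mathbf{x} = \sum \lambda_b (\mathbf{n} \cdot b) \geq 0$, with equality forcing every $\lambda_b = 0$ and hence $\mathbf{x} = \mathbf{0}$. Thus $H$ is a supporting hyperplane for $C$ whose intersection with $C$ is exactly $\{\mathbf{0}\}$, which by Definition~\ref{d-cone} means $C$ is pointed.

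Second I would establish that $C$ is polyhedral, i.e., an intersection of finitely many closed half-spaces through the origin. This is Weyl's classical theorem that every finitely generated cone is polyhedral. The standard proof is by Fourier--Motzkin elimination: the condition $\mathbf{x} \in C$ is equivalent to the solvability of the linear system $\mathbf{x} - \sum \lambda_b b = \mathbf{0}$, $\lambda_b \geq 0$, and eliminating the variables $\lambda_b$ one at a time yields a finite system of linear inequalities in $\mathbf{x}$ alone. Since the original system is homogeneous in $(\mathbf{x}, \lambda)$, every surviving inequality is homogeneous in $\mathbf{x}$ and so defines a half-space whose boundary passes through the origin. I would simply cite this as a standard result from polyhedral combinatorics (e.g.\ Schrijver's \emph{Theory of Linear and Integer Programming}) rather than reproduce the elimination in detail.

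Finally I would verify that $B$ is a basis for $C$. Property~(\enref{en-basis-comb}) of Definition~\ref{d-basis} is immediate from the definition of $C$ as the conic hull of $B$, and the equivalent property $(\mathrm{\enref{en-basis-remove}}')$ is exactly assumption~(i) of the lemma. Combining the three steps gives the conclusion. The only nontrivial ingredient is Weyl's theorem for the polyhedrality claim; pointedness and the basis verification are essentially bookkeeping given the hypotheses.
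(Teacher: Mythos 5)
The paper states Lemma~\ref{l-basis-construct} without proof, presenting it together with Lemma~\ref{l-basis-unique} as a ``well known result'' about polyhedral cones, so there is no argument of the paper's own to compare yours against. Your proof is correct and is the natural one: pointedness follows because the hyperplane $H$ of hypothesis~(ii) supports the conic hull and meets it only in the origin (which is exactly the equivalent characterisation given in Definition~\ref{d-cone}); polyhedrality is the Minkowski--Weyl theorem for finitely generated cones, which you correctly identify as the one genuinely nontrivial ingredient and rightly treat as a citation rather than reproving via Fourier--Motzkin; and the two conditions of Definition~\ref{d-basis} are checked directly, with the alternative form $(\mathrm{ii}')$ of the minimality condition being verbatim hypothesis~(i) of the lemma, and $B \subset C$ holding trivially since each generator is itself a non-negative combination. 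Nothing is missing.
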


Some pairs of basis vectors are {\em adjacent}, in the sense that the
corresponding edges of the cone are joined by two-dimensional
faces.\footnote{Note that the only one-dimensional faces of a
polyhedral cone are its extremal rays, i.e., rays of the form
$\{\lambda \mathbf{b}\,|\,\lambda > 0\}$ where $\mathbf{b}$ is a basis vector.}
In Figure~\ref{fig-cone} above, adjacent pairs of basis vectors are marked
by dotted lines.  We define adjacency formally as follows.

\begin{defn}[Adjacency]
    Let $\mathbf{b}$ and $\mathbf{b}'$ be two distinct basis vectors for a
    pointed polyhedral cone $C$.  We define $\mathbf{b}$ and $\mathbf{b}'$
    to be {\em adjacent} if the smallest-dimensional face of $C$
    containing both $\mathbf{b}$ and $\mathbf{b}'$ has dimension two.
\end{defn}

Bases of polyhedral cones provide a very limited form of uniqueness when
taking non-negative linear combinations, as seen in the following
simple lemma.

\begin{lemma} \label{l-basis-lc}
    Let $B=\{\mathbf{b}_1,\ldots,\mathbf{b}_k\}$
    be a basis for a pointed polyhedral cone $C \subset \R^d$.
    If some $\mathbf{b}_r \in B$ can be written as a non-negative linear
    combination of basis vectors (that is,
    $\mathbf{b}_r = \sum \lambda_i \mathbf{b}_i$ where all
    $\lambda_i \geq 0$), then this linear combination must be the trivial
    $\mathbf{b}_r=\mathbf{b}_r$.  That is, $\lambda_r=1$ and
    $\lambda_i=0$ for $i \neq r$.
\end{lemma}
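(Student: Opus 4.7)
The plan is to split the equation $\mathbf{b}_r = \sum \lambda_i \mathbf{b}_i$ into cases according to the value of $\lambda_r$, and in each case either derive a contradiction with the basis property ($\mathrm{\enref{en-basis-remove}}'$) or exploit the pointedness of $C$ to pin down the remaining coefficients.

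First I would rearrange the given equation to $(1 - \lambda_r)\,\mathbf{b}_r = \sum_{i \neq r} \lambda_i \mathbf{b}_i$. If $\lambda_r < 1$ then dividing by $1 - \lambda_r > 0$ expresses $\mathbf{b}_r$ as a non-negative linear combination of the other basis vectors, contradicting property ($\mathrm{\enref{en-basis-remove}}'$) of Definition~\ref{d-basis}. This case is immediate.

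The two remaining cases both use the assumption that $C$ is pointed. By Definition~\ref{d-cone}, there is a supporting hyperplane $H$ through $\mathbf{0}$ meeting $C$ only at $\mathbf{0}$, and hence a linear functional $f\co\R^d\to\R$ with $f(\mathbf{x}) > 0$ for every $\mathbf{x} \in C \setminus \{\mathbf{0}\}$; in particular $f(\mathbf{b}_i) > 0$ for every $i$. If $\lambda_r = 1$, the rearranged equation reads $\sum_{i\neq r}\lambda_i\mathbf{b}_i = \mathbf{0}$; applying $f$ gives $\sum_{i\neq r}\lambda_i f(\mathbf{b}_i) = 0$, and since each $\lambda_i \geq 0$ and each $f(\mathbf{b}_i) > 0$ we conclude $\lambda_i = 0$ for all $i \neq r$, which is the desired conclusion. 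If $\lambda_r > 1$, rewriting gives $\mathbf{0} = (\lambda_r - 1)\mathbf{b}_r + \sum_{i\neq r}\lambda_i\mathbf{b}_i$ with non-negative coefficients, at least one (namely $\lambda_r - 1$) strictly positive; applying $f$ yields $0 > 0$, a contradiction.

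The only real subtlety is the step that invokes pointedness via a strictly positive linear functional on $C \setminus \{\mathbf{0}\}$; without this, the case $\lambda_r = 1$ and the case $\lambda_r > 1$ cannot be separated from degenerate behaviour. But this is precisely the content of the second characterisation of ``pointed'' recorded in Definition~\ref{d-cone}, so the argument goes through essentially for free, and no further machinery is needed.
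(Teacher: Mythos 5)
Your proof is correct and follows essentially the same strategy as the paper's: rule out $\lambda_r < 1$ by contradiction with property $(\mathrm{ii}')$ of Definition~\ref{d-basis}, then use pointedness (a supporting hyperplane through the origin meeting $C$ only at $\mathbf{0}$) to force the remaining coefficients to vanish. The only cosmetic difference is that you split $\lambda_r = 1$ and $\lambda_r > 1$ into separate cases, whereas the paper subtracts $\mathbf{b}_r$ once to reduce both at the same time to a non-negative combination equal to $\mathbf{0}$.
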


\begin{proof}
    Suppose we have some non-negative linear combination
    $\mathbf{b}_r = \sum \lambda_i \mathbf{b}_i$.
    If $\lambda_r < 1$ then we obtain $\mathbf{b}_r$ as a non-negative
    linear combination of the other basis vectors
    $\mathbf{b}_i$~($i \neq r$), in violation of
    Definition~\ref{d-basis}.  Therefore $\lambda_r \geq 1$, and we
    can subtract $\mathbf{b}_r$ to obtain
    $\mathbf{0}$ as a non-negative linear combination
    $\mathbf{0} = \sum \lambda_i' \mathbf{b}_i$.

    Since our cone is pointed, it has a supporting hyperplane $H$ for which
    $\mathbf{0} \in H$ but every $\mathbf{b}_i$ lies strictly to one side
    of $H$.  The only way to obtain this with non-negative
    $\lambda_i'$ is to set every $\lambda_i'=0$, showing our original
    linear combination to be the trivial $\mathbf{b}_r=\mathbf{b}_r$.
\end{proof}

The uniqueness in Lemma~\ref{l-basis-lc} is limited in the sense that
it only holds when $\mathbf{b}_r$ is a basis vector.  In general,
an arbitrary point $\mathbf{x} \in C$ might well be expressible
as a non-negative linear combination of basis vectors
in several different ways.  Even for basis elements, it should be noted that
Lemma~\ref{l-basis-lc} can fail for non-pointed cones.

An even weaker form of uniqueness exists for combinations of
adjacent basis vectors, and indeed can be used to completely
characterise adjacency as follows.

\begin{lemma} \label{l-basis-adj-lincomb}
    Let $B=\{\mathbf{b}_1,\ldots,\mathbf{b}_k\}$
    be a basis for a pointed polyhedral cone $C \subset \R^d$.
    Two distinct basis vectors $\mathbf{b}_r,\mathbf{b}_s \in B$
    are adjacent if and only if, whenever
    $\mu \mathbf{b}_r + \eta \mathbf{b}_s = \sum \lambda_i \mathbf{b}_i$
    for $\mu,\eta,\lambda_i \geq 0$, we must have
    $\lambda_i=0$ for every $i \neq r,s$.
\end{lemma}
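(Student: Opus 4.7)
The plan is to reduce both implications to classical properties of the face lattice of a pointed polyhedral cone. Three standard facts will suffice: (a) the absorption property of faces --- if $G$ is a face of $C$ and $\mathbf{u} + \mathbf{v} \in G$ with $\mathbf{u}, \mathbf{v} \in C$, then $\mathbf{u}, \mathbf{v} \in G$; (b) every non-zero $\mathbf{x} \in C$ lies in the relative interior of a unique minimal face $F_{\mathbf{x}}$, and applying (a) to $\mathbf{b}_r + \mathbf{b}_s$ shows that $F_{\mathbf{b}_r + \mathbf{b}_s}$ coincides with $F$, the minimal face of $C$ containing both $\mathbf{b}_r$ and $\mathbf{b}_s$; (c) $F \cap B$ is a basis for $F$, since absorption applied to any expansion $\mathbf{x} = \sum \lambda_i \mathbf{b}_i$ of a point $\mathbf{x} \in F$ forces every $\mathbf{b}_i$ with $\lambda_i > 0$ to lie in $F$.

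For the forward direction, assume $\mathbf{b}_r$ and $\mathbf{b}_s$ are adjacent, so $\dim F = 2$. A pointed two-dimensional cone has exactly two extremal rays, so by (c) the basis $F \cap B$ has exactly two elements; since $\mathbf{b}_r, \mathbf{b}_s \in F \cap B$ are distinct, we get $F \cap B = \{\mathbf{b}_r, \mathbf{b}_s\}$. Now given any decomposition $\mu \mathbf{b}_r + \eta \mathbf{b}_s = \sum \lambda_i \mathbf{b}_i$ with non-negative coefficients, the left-hand side lies in $F$, and by absorption each summand $\lambda_i \mathbf{b}_i$ lies in $F$ as well, so $\lambda_i = 0$ whenever $\mathbf{b}_i \notin F \cap B$. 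This gives $\lambda_i = 0$ for all $i \neq r, s$.

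For the converse I argue by contrapositive. Suppose $\mathbf{b}_r$ and $\mathbf{b}_s$ are not adjacent, so $\dim F \geq 3$; by (c), $F \cap B$ then contains some third vector $\mathbf{b}_t$ with $t \neq r, s$. By (b) the point $\mathbf{b}_r + \mathbf{b}_s$ lies in the relative interior of $F$, so there exists $\epsilon > 0$ small enough that $(\mathbf{b}_r + \mathbf{b}_s) - \epsilon \mathbf{b}_t$ still lies in $F$. Expanding this residual vector in the basis $F \cap B$ as $\sum \lambda_i' \mathbf{b}_i$ and moving the $\epsilon \mathbf{b}_t$ back to the right yields $\mathbf{b}_r + \mathbf{b}_s = \sum \lambda_i \mathbf{b}_i$ with every $\lambda_i \geq 0$ and $\lambda_t \geq \epsilon > 0$. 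Setting $\mu = \eta = 1$ gives a decomposition that violates the stated condition, completing the contrapositive.

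The main obstacle is really fact (b): establishing that the minimal face of $C$ containing $\mathbf{b}_r + \mathbf{b}_s$ is the same as the minimal face $F$ containing both $\mathbf{b}_r$ and $\mathbf{b}_s$. This follows directly from absorption, since any face containing the sum must contain each summand, and any face containing both summands automatically contains their sum. The remaining ingredients are either standard polyhedral-cone facts or immediate consequences of Lemma~\ref{l-basis-unique}, so the bulk of the argument is really careful bookkeeping with the face lattice rather than any intricate calculation.
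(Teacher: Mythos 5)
Your proof is correct but takes a different route from the paper's. The paper proves the forward direction by quoting Br{\o}ndsted's characterisation that every face is the intersection of $C$ with a supporting hyperplane passing through $\mathbf 0$, then observing that the hyperplane cutting out the minimal face $F$ separates $\{\mathbf{b}_r,\mathbf{b}_s\}$ from every other basis vector, so the non-negative combination must have zero weight off $F$. For the converse the paper works with the plane $G$ spanned by $\mathbf{b}_r$, $\mathbf{b}_s$ and $\mathbf 0$, notes that $G$ is not a face, and invokes the line-segment characterisation of faces (a segment through $C$ whose interior meets a face must lie entirely in the face) to manufacture a point $\mathbf z = \mu\mathbf{b}_r + \eta\mathbf{b}_s$ that also decomposes non-trivially. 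You instead work entirely with the face lattice: absorption ($\mathbf u + \mathbf v \in F$, $\mathbf u,\mathbf v \in C$ forces $\mathbf u,\mathbf v \in F$), the identification of the carrier of $\mathbf{b}_r + \mathbf{b}_s$ with $F$, the fact that $F \cap B$ is a basis for $F$, and a relative-interior perturbation to find the third basis vector in a decomposition. Both are standard polyhedral-cone arguments; yours is arguably cleaner in the converse direction because the perturbation of $\mathbf{b}_r + \mathbf{b}_s - \epsilon\mathbf{b}_t$ produces the violating decomposition directly with explicit coefficients, whereas the paper's construction detours through an auxiliary plane and a convex combination of points $\mathbf x,\mathbf y$ not lying in it. The paper's forward direction is marginally more self-contained, since it avoids mention of relative interiors and carrier faces in favour of a single supporting hyperplane. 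Your absorption property and the paper's segment-crossing characterisation are in fact equivalent for cones, so the underlying facts are the same; only the packaging differs.
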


In other words, $\mathbf{b}_r$ and $\mathbf{b}_s$ are adjacent if and only
if any non-negative linear combination of basis vectors
$\mathbf{b}_r$ and $\mathbf{b}_s$ can {\em only} be expressed as a
non-negative linear combination of basis vectors
$\mathbf{b}_r$ and $\mathbf{b}_s$.

\begin{proof}
    To prove this we use two equivalent characterisations of faces
    for polyhedral cones\footnote{Although these characterisations
    are equivalent for polytopes and polyhedra, they are not equivalent
    for general convex sets.}, both of which are described by
    Br{\o}ndsted \cite{brondsted83}:
    \begin{enumerate}[(a)]
        \item \label{en-face-support}
        A set $F \subseteq C$ is a face of $C$ if and only if
        $F = C$, $F = \emptyset$, or $F = C \cap H$ for some
        supporting hyperplane $H$;
        \item \label{en-face-cross}
        A set $F \subseteq C$ is a face of $C$ if and only if
        (i)~$F$ is convex, and (ii)~whenever the open line
        segment $(\mathbf{x},\mathbf{y})$
        contains a point in $F$ for some $\mathbf{x},\mathbf{y} \in C$,
        the entire closed line segment $[\mathbf{x},\mathbf{y}]$ lies in $F$.
    \end{enumerate}
    We also note that every face of a polyhedral cone (and thus every
    supporting hyperplane above) must pass through the origin.

    Suppose the basis vectors $\mathbf{b}_r$ and $\mathbf{b}_s$ are
    adjacent, and that
    $\mu \mathbf{b}_r + \eta \mathbf{b}_s = \sum \lambda_i \mathbf{b}_i$
    for some $\mu,\eta,\lambda_i \geq 0$.
    Let $F$ be the smallest-dimensional face of $C$ containing both
    $\mathbf{b}_r$ and $\mathbf{b}_s$; since $F$ is two-dimensional,
    it cannot contain any other basis vector $\mathbf{b}_i$ for
    $i \neq r,s$.

    Using (\enref{en-face-support}) above, we can write $F = C \cap H$
    for some supporting hyperplane $H$ passing through the origin.
    We see that $\mathbf{b}_r$ and $\mathbf{b}_s$ lie in $H$ and
    every other basis vector lies strictly to one side of $H$,
    whereupon our non-negative linear combination must have
    $\lambda_i=0$ for every $i \neq r,s$.

    Suppose now that the basis vectors $\mathbf{b}_r$ and $\mathbf{b}_s$
    are not adjacent.  Let $G$ be the two-dimensional plane passing through
    $\mathbf{b}_r$, $\mathbf{b}_s$ and the origin; the non-adjacency
    of $\mathbf{b}_r$ and $\mathbf{b}_s$ shows that $G$ cannot be a face
    of $C$.  Therefore, by (\enref{en-face-cross}) above, there are points
    $\mathbf{x},\mathbf{y} \in C$ for which
    $(\mathbf{x},\mathbf{y})$ meets $G$ but
    $[\mathbf{x},\mathbf{y}] \nsubseteq G$.

    Let $\mathbf{z} \in (\mathbf{x},\mathbf{y}) \cap G$.  Because
    $\mathbf{z} \in G$ we can write
    $\mathbf{z} = \mu \mathbf{b}_r + \eta \mathbf{b}_s$ for some
    $\mu,\eta \geq 0$.  On the other hand, we can also write
    $\mathbf{z}$ as a non-trivial convex
    combination of $\mathbf{x}$ and $\mathbf{y}$.
    Since $[\mathbf{x},\mathbf{y}] \nsubseteq G$ at
    least one of $\mathbf{x}$ and $\mathbf{y}$ cannot be expressed
    purely in terms of $\mathbf{b}_r$ and $\mathbf{b}_s$, and we obtain
    $\mathbf{z} = \mu \mathbf{b}_r + \eta \mathbf{b}_s =
    \sum \lambda_i \mathbf{b}_i$ where every $\lambda_i \geq 0$
    and some $\lambda_i > 0$ for $i \neq r,s$.
\end{proof}

There are other characterisations of adjacency, such as the algebraic and
combinatorial conditions described by Fukuda and Prodon
\cite{fukuda96-doubledesc}.  However, Lemma~\ref{l-basis-adj-lincomb} will be
more useful to us when we come to the proof of Algorithm~\ref{a-quadtostd}.

The {\em double description method}, devised by Motzkin
et al.~\cite{motzkin53-dd} and improved upon by other authors since,
is a standard algorithm for inductively converting
a set of half-spaces that define a polyhedral cone into a basis for
this same cone.  The double description method plays an important role
in the standard enumeration of normal surfaces; the reader is referred
to \cite{burton08-dd} for both theoretical and practical details.
Although we do not explicitly call upon the double description method here,
we do rely on one of its core components, which is the following result.

\begin{lemma} \label{l-dd-core}
    Let $C \subset \R^d$ be a pointed polyhedral cone with basis $B$,
    and let $H$ be a half-space defined by the linear inequality
    $H = \{\mathbf{x} \in \R^d\,|\,\mathbf{x} \cdot \mathbf{h} \geq 0\}$.
    Then the intersection $C \cap H$ is also a pointed polyhedral cone,
    and we can compute a basis for $C \cap H$ as follows.

    Partition the basis $B$ into sets
    $S_0 = \{\mathbf{b} \in B\,|\,\mathbf{b} \cdot \mathbf{h} = 0\}$,
    $S_+ = \{\mathbf{b} \in B\,|\,\mathbf{b} \cdot \mathbf{h} > 0\}$ and
    $S_- = \{\mathbf{b} \in B\,|\,\mathbf{b} \cdot \mathbf{h} < 0\}$.  Then
    a basis for $C \cap H$ is
    \begin{equation*}
    S_0 \cup S_+ \cup \left\{
        \frac{(\mathbf{u}\cdot\mathbf{h})\mathbf{w} -
              (\mathbf{w}\cdot\mathbf{h})\mathbf{u}}
             {(\mathbf{u}\cdot\mathbf{h}) -
              (\mathbf{w}\cdot\mathbf{h})}
        \,\left|\,
        \begin{array}{l}
        \mathbf{u} \in S_+\ \text{and}\ \mathbf{w} \in S_-, \\
        \text{$\mathbf{u},\mathbf{w}$ are adjacent basis vectors of $C$}
        \end{array}
    \right.\right\}.
    \end{equation*}
\end{lemma}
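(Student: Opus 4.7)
The plan is to verify four claims: (i)~$C \cap H$ is a pointed polyhedral cone; (ii)~each proposed generator lies in $C \cap H$; (iii)~every vector of $C \cap H$ is a non-negative combination of these generators; and (iv)~no generator is such a combination of the others. Claims (i) and (ii) are routine: $C \cap H$ is polyhedral as an intersection of polyhedral cones and inherits pointedness from $C$ (any supporting hyperplane meeting $C$ only at $\mathbf{0}$ does the same for $C \cap H$); elements of $S_0 \cup S_+$ lie in $C \cap H$ by construction, and for an adjacent pair $\mathbf{u}\in S_+, \mathbf{w}\in S_-$ both numerator coefficients $-\mathbf{w}\cdot\mathbf{h}$ and $\mathbf{u}\cdot\mathbf{h}$ and the denominator $(\mathbf{u}\cdot\mathbf{h})-(\mathbf{w}\cdot\mathbf{h})$ are strictly positive, so $\mathbf{v}_{\mathbf{u},\mathbf{w}}$ is a positive combination of $\mathbf{u},\mathbf{w}$ and lies in $C$, while a direct calculation gives $\mathbf{v}_{\mathbf{u},\mathbf{w}}\cdot\mathbf{h}=0$.

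Claim (iii) is the substantive step. My approach is induction on the number of basis vectors from $S_-$ appearing with positive coefficient in a non-negative expansion $\mathbf{x} = \sum \lambda_i \mathbf{b}_i$ of the given $\mathbf{x} \in C \cap H$. If this count is zero we are done. Otherwise choose some $\mathbf{w}\in S_-$ with $\lambda_w > 0$; since $\mathbf{x}\cdot\mathbf{h}\geq 0$ while $\mathbf{w}\cdot\mathbf{h}<0$, some $\mathbf{u}\in S_+$ must have $\lambda_u > 0$. If such a $\mathbf{u}$ can be chosen adjacent to $\mathbf{w}$, a correctly-proportioned multiple of $\mathbf{v}_{\mathbf{u},\mathbf{w}}$ is extracted from $\lambda_u \mathbf{u} + \lambda_w \mathbf{w}$ so that one of the two coefficients becomes zero; iterating with further adjacent partners either eliminates $\mathbf{w}$ (reducing the induction count) or exhausts all adjacent partners in $S_+$. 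In that latter situation, and when no adjacent $\mathbf{u}\in S_+$ to $\mathbf{w}$ has $\lambda_u > 0$ to begin with, we invoke Lemma~\ref{l-basis-adj-lincomb}: for a non-adjacent active pair $\mathbf{u},\mathbf{w}$, the combination $\mu\mathbf{u}+\eta\mathbf{w}$ admits an alternative non-negative expansion involving other basis vectors, and substituting this into the expansion of $\mathbf{x}$ produces a revised expansion on which the induction can ultimately make progress.

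For claim (iv), any expression of some $\mathbf{b}\in S_0\cup S_+$ as a non-negative combination of the remaining generators could, by expanding each $\mathbf{v}_{\mathbf{u}',\mathbf{w}'}$ back in terms of its defining pair $\mathbf{u}',\mathbf{w}'$, be rewritten as a non-trivial non-negative combination of basis vectors of $C$ equal to $\mathbf{b}$, contradicting Lemma~\ref{l-basis-lc}. For a new generator $\mathbf{v}_{\mathbf{u},\mathbf{w}}$, note that it lies on the supporting hyperplane $\{\mathbf{x}:\mathbf{x}\cdot\mathbf{h}=0\}$ of $C \cap H$, so any such combination must involve only generators on this hyperplane; Lemma~\ref{l-basis-adj-lincomb} applied to the adjacent pair $\mathbf{u},\mathbf{w}$ then rules out any such non-trivial representation.

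The main obstacle is the non-adjacent sub-case in claim (iii): converting the alternative expansions guaranteed by Lemma~\ref{l-basis-adj-lincomb} into genuine progress toward eliminating $S_-$ contributions, with a strictly decreasing termination measure, is the delicate heart of the argument and must be set up carefully so that the substitution process cannot loop indefinitely.
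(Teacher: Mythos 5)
The paper does not prove this lemma at all: immediately after stating it, it refers the reader to Fukuda and Prodon~\cite{fukuda96-doubledesc} for the details, treating it as a standard component of the double description method. So you are attempting something the paper never attempts, which is fine, but the proof you offer is not complete and you say so yourself.

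The gap in claim~(iii) is genuine, not cosmetic. Your inductive measure (``number of $S_-$ basis vectors with positive coefficient'') does not obviously decrease: when $\mathbf{u}$ and $\mathbf{w}$ are non-adjacent, Lemma~\ref{l-basis-adj-lincomb} hands you an alternative expansion of $\mu\mathbf{u}+\eta\mathbf{w}$ in terms of \emph{other} basis vectors, but this new expansion can reintroduce vectors from $S_-$ that were previously absent, or increase the multiset of active $S_-$ vectors. Without a carefully chosen well-founded termination quantity (and a proof that each substitution strictly decreases it), the argument could in principle cycle, and you correctly flag this as the unfinished heart of the proof. As it stands, claim~(iii) is not established. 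A more robust route avoids the substitution dance entirely: since $C\cap H$ is a pointed polyhedral cone, Lemma~\ref{l-basis-unique} says its basis is unique up to positive scaling, so it suffices to characterize the extreme rays of $C\cap H$ directly. An extreme ray of $C\cap H$ either coincides with an extreme ray of $C$ lying in $H$ (giving $S_0\cup S_+$), or lies on the boundary hyperplane $\{\mathbf{x}:\mathbf{x}\cdot\mathbf{h}=0\}$ and is the intersection of that hyperplane with a minimal face of $C$ crossing it; minimality forces that face to be two-dimensional, i.e.\ spanned by an adjacent pair $\mathbf{u}\in S_+$, $\mathbf{w}\in S_-$. This is a face-lattice argument rather than a coefficient-manipulation argument, and it sidesteps the termination problem.

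Your claims~(i), (ii) and~(iv) are sound. In~(iv), the key observations --- that $\mathbf{v}_{\mathbf{u},\mathbf{w}}\cdot\mathbf{h}=0$ forces any putative non-negative expansion to use only generators on the hyperplane, and that re-expanding those in $B$ and invoking Lemma~\ref{l-basis-adj-lincomb} for the adjacent pair $\mathbf{u},\mathbf{w}$ collapses the expansion to the trivial one --- are correct and are the same ideas the cleaner approach to~(iii) would use. If you run the face-lattice argument for~(iii), much of your~(iv) reasoning can be reused or becomes immediate.
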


\begin{figure}[htb]
\centerline{\includegraphics[scale=0.8]{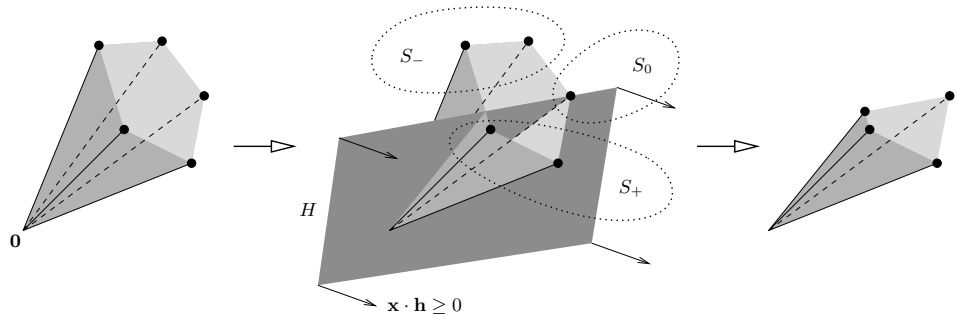}}
\caption{Intersecting a pointed polyhedral cone with a new half-space}
\label{fig-dd}
\end{figure}

This procedure is illustrated in Figure~\ref{fig-dd}.  For further details on
the double description method (including Lemma~\ref{l-dd-core}),
the reader is referred to the
excellent overview by Fukuda and Prodon \cite{fukuda96-doubledesc}.

When we come to proving Algorithm~\ref{a-quadtostd}, we will need to work
with restricted portions of polyhedral cones that satisfy the quadrilateral
constraints.  This motivates the following definition.

\begin{defn}[Semi-Admissible Part] \label{d-semi-adm}
    Consider any set of vectors $S \subseteq \R^{7n}$.  The
    {\em semi-admissible part} of $S$, denoted $\adm{S}$,
    is the subset of all vectors in $S$ that satisfy the quadrilateral
    constraints.
\end{defn}

We call this the {\em semi}-admissible part because we deliberately make
no mention of non-negativity or the matching equations.  This is
essential---in Algorithm~\ref{a-quadtostd}
we deal with vectors that satisfy the quadrilateral constraints
but that can have negative coordinates, and in the corresponding proof
we take $i$th truncations of these
vectors which can break the matching equations.

It is important to note that the semi-admissible part of a polyhedral
cone $C$ may well be non-convex, and so might not be a polyhedral cone in
itself.  Nevertheless, $\adm{C}$ remains closed under non-negative scalar
multiplication.

The following result shows that, for ``sufficiently non-negative''
pointed polyhedral cones $C$, bases for $C$ and bases for $\adm{C}$ are
tightly related.

\begin{lemma} \label{l-adm-basis}
    Let $C$ be a pointed polyhedral cone in $\R^{7n}$ where, for every
    $\mathbf{x} \in C$, the quadrilateral coordinates of $\mathbf{x}$
    are all non-negative.  If $B$ is a basis for $C$, then the
    semi-admissible part $\adm{B}$ forms a basis for $\adm{C}$.
    Conversely, every basis for $\adm{C}$ can be expressed in the form
    $\adm{B}$ where $B$ is a basis for $C$.
\end{lemma}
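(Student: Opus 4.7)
The plan is to exploit a simple inheritance principle that follows directly from the hypothesis on $C$: if $\mathbf{x} = \mathbf{y} + \mathbf{z}$ for $\mathbf{y}, \mathbf{z} \in C$, then any quadrilateral coordinate that vanishes in $\mathbf{x}$ must vanish in both $\mathbf{y}$ and $\mathbf{z}$, because all quadrilateral coordinates throughout $C$ are non-negative. It follows that if $\mathbf{x}$ satisfies the quadrilateral constraints then so does every non-negative summand, and this extends to arbitrary non-negative linear combinations.

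For the forward direction, given any $\mathbf{x} \in \adm{C}$, write $\mathbf{x} = \sum \lambda_i \mathbf{b}_i$ with $\mathbf{b}_i \in B$ and $\lambda_i \geq 0$. The inheritance principle places each $\mathbf{b}_i$ with $\lambda_i > 0$ in $\adm{B}$, establishing property~(i) of Definition~\ref{d-basis}. Property~(ii) is immediate from Lemma~\ref{l-basis-lc} applied to $B$: no element of $B$ is a non-negative combination of others in $B$, so the same holds for any subset, including $\adm{B}$.

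For the converse, let $B'$ be any basis of $\adm{C}$. The key preliminary observation is that Lemma~\ref{l-basis-lc} continues to apply with $B'$ in place of $B$ and $\adm{C}$ in place of $C$: the proof of that lemma invokes convexity nowhere, relying only on a supporting hyperplane at the origin with all basis vectors strictly on one side, and $\adm{C}$ inherits such a hyperplane from the pointed cone $C$. Armed with this, I would show every $\mathbf{b}' \in B'$ lies on an extremal ray of $C$: any decomposition $\mathbf{b}' = \mathbf{y} + \mathbf{z}$ in $C$ sends both $\mathbf{y}$ and $\mathbf{z}$ into $\adm{C}$ by inheritance, and expanding them in $B'$ yields a non-negative combination equal to $\mathbf{b}'$ that the extended Lemma~\ref{l-basis-lc} forces to be trivial, so $\mathbf{y}$ and $\mathbf{z}$ are scalar multiples of $\mathbf{b}'$. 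Fixing any basis $B_0$ of $C$, each $\mathbf{b}' \in B'$ is therefore a positive scalar multiple of some $\mathbf{c} \in B_0$, and inheritance further places $\mathbf{c} \in \adm{B_0}$. A symmetric argument---expand $\mathbf{c} \in \adm{B_0}$ in $B'$, substitute each $\mathbf{b}_j' = \gamma_j \mathbf{c}_j$ with $\mathbf{c}_j \in B_0$, and apply Lemma~\ref{l-basis-lc} to $C$---shows this assignment is a bijection between $B'$ and $\adm{B_0}$. Finally, to produce the required $B$, rescale each $\mathbf{c} \in \adm{B_0}$ inside $B_0$ to match its partner in $B'$, leaving the elements of $B_0 \setminus \adm{B_0}$ untouched; the resulting set $B$ remains a basis of $C$ and satisfies $\adm{B} = B'$.

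The main obstacle is justifying the extension of Lemma~\ref{l-basis-lc} to the non-convex set $\adm{C}$, which is not itself a polyhedral cone. The resolution is to re-read the proof of Lemma~\ref{l-basis-lc} and verify that it uses only the pointed-ness of the ambient cone at the origin, a property inherited by any subset of $C$ containing $\mathbf{0}$; once this is in hand, the rest of the argument is a routine interplay between $B$ and $B'$ through the inheritance principle.
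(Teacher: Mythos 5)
Your proof is correct, and the forward direction is essentially the paper's argument. The converse direction, however, takes a genuinely different route. The paper's converse avoids extending Lemma~\ref{l-basis-lc} at all: it fixes an arbitrary basis $B$ of $C$, and for each $\mathbf{b}_i \in \adm{B}$ it expands $\mathbf{b}_i$ first in $B'$ and then expands those $B'$-vectors in $B$; applying Lemma~\ref{l-basis-lc} only to $C$ and $B$ forces the $B'$-expansion to consist solely of positive multiples of $\mathbf{b}_i$, so some $\mu\mathbf{b}_i \in B'$. The paper then rescales $\mathbf{b}_i \mapsto \mu\mathbf{b}_i$ in $B$, one basis vector at a time, and closes the argument by observing that $\adm{B} \subseteq B'$ together with the already-proved forward direction forces $\adm{B} = B'$ (any excess vectors in $B'$ would be redundant). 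Your route instead extends Lemma~\ref{l-basis-lc} to $\adm{C}$ with basis $B'$ (the key observation, which you correctly identify, is that the paper's proof of that lemma uses only property~($\mathrm{ii}'$) of a basis and the separating hyperplane at the origin, both of which survive passage to the non-convex subset), then shows that each $\mathbf{b}'\in B'$ lies on an extremal ray of $C$, and builds the target basis $B$ via an explicit bijection with $\adm{B_0}$. Both arguments work; yours requires extending a lemma but delivers a more transparent geometric picture (elements of $B'$ really are extreme rays of the ambient cone), while the paper's is leaner and avoids re-examining the lemma's proof. One minor note: when you consolidate the expansion $\mathbf{c} = \sum_j \lambda_j\gamma_j\,\mathbf{c}_j$ before invoking Lemma~\ref{l-basis-lc}, you should collect repeated $\mathbf{c}_j$'s into a single term, since the lemma as stated is indexed over distinct basis vectors; this is routine but worth saying.
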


\begin{proof}
    Suppose that $B=\{\mathbf{b}_1,\ldots,\mathbf{b}_k\}$ is a basis for $C$.
    \begin{itemize}
        \item Since $B$ is a basis it is clear that every
        $\mathbf{x} \in \adm{C} \subseteq C$ can be expressed as
        $\mathbf{x} = \sum \lambda_i \mathbf{b}_i$ with all $\lambda_i \geq 0$.
        Furthermore, if $\lambda_i > 0$ for any
        $\mathbf{b}_i$ that does {\em not} satisfy the
        quadrilateral constraints, the non-negativity condition on $C$
        ensures that $\mathbf{x}=\lambda_i \mathbf{b}_i + \ldots$ cannot
        satisfy the quadrilateral constraints either.
        Thus $\mathbf{x} = \sum \lambda_i \mathbf{b}_i$ is actually
        a non-negative linear combination of vectors in $\adm{B}$.

        \item Since no element of $B$ can be expressed as a non-negative
        linear combination of the others, the same must be true of
        $\adm{B} \subseteq B$.
    \end{itemize}
    It follows by Definition~\ref{d-basis}
    that $\adm{B}$ is a basis for $\adm{C}$.

    Conversely, let $B'$ be a basis for $\adm{C}$, and let
    $B=\{\mathbf{b}_1,\ldots,\mathbf{b}_k\}$ be some basis for $C$.
    For each $\mathbf{b}_i \in \adm{B}$, we modify $B$ as follows.
    \begin{itemize}
        \item Since $B'$ is a basis for $\adm{C}$, we can
        express $\mathbf{b}_i$ as a non-negative linear combination of
        elements of $B'$; we mark this linear combination ($\star$)
        for later reference.  Because $B$ is a basis for $C$, we can
        expand ($\star$) to a non-negative linear combination of elements
        of $B$.  Thus we obtain $\mathbf{b}_i = \sum \lambda_j \mathbf{b}_j$
        for $\lambda_j \geq 0$.

        However, Lemma~\ref{l-basis-lc} shows that the only such linear
        combination can be $\mathbf{b}_i = \mathbf{b}_i$.  Since all
        linear combinations are non-negative, it follows that the
        first linear combination ($\star$) must likewise consist only of
        positive multiples of $\mathbf{b}_i$; in particular, we must have
        $\mu \mathbf{b}_i \in B'$ for some $\mu > 0$.
        We now replace $\mathbf{b}_i$ with $\mu \mathbf{b}_i$
        in $B$; it is clear that $B$ remains a basis for $C$.
    \end{itemize}
    By following this procedure for each $\mathbf{b}_i \in \adm{B}$,
    we obtain a basis $B$ for $C$ that satisfies
    $B' \supseteq \adm{B}$.  However, from the first part of this lemma
    $\adm{B}$ is also a basis for $\adm{C}$.  Therefore any additional
    vectors in $B'$ would be redundant, and so we have $B' = \adm{B}$.
\end{proof}

We conclude our brief study of polyhedral cones with an example of
a semi-admissible part and its basis that we have seen before.
The following observations are all immediate consequences of
the relevant definitions and Lemma~\ref{l-adm-basis}.

\begin{example} \label{ex-basis-soln}
    Let $C \subset \R^{7n}$ be the set of all vectors whose entries are
    all non-negative and which satisfy the standard matching equations for
    the triangulation $\tri$.  Then $C$ is a pointed polyhedral cone, the
    standard projective solution space $\stdproj$ is a finite cross-section
    of this cone (taken along the projective hyperplane $J^{7n}$),
    and the vertices of the polytope $\stdproj$ form a basis for $C$.
    Furthermore, $\adm{C} = \R^{7n}_a$ (the set of all admissible vectors in
    $\R^{7n}$), and the standard solution set forms a basis for $\adm{C}$.
\end{example}

\subsection{The Main Conversion Algorithm}
\label{s-quadtostd-algorithm}

We are now ready to present the quadrilateral-to-standard solution set
conversion algorithm in full detail.  The algorithm relies
on the numbering of standard coordinate positions---here
we number coordinate positions $1,2,\ldots,7n$ according to
Definition~\ref{d-vecrep}, so that positions $7i+\{1,2,3,4\}$ correspond to
triangular coordinates and positions $7i+\{5,6,0\}$ correspond
to quadrilateral coordinates.  For an arbitrary vector
$\mathbf{w}\in\R^{7n}$, we use the common notation whereby
$w_i \in \R$ denotes the coordinate of $\mathbf{w}$ in the $i$th position.

Roughly speaking, the algorithm operates as follows.  Given the $m$
vertices $V_1,\ldots,V_m$ of the triangulation, we inductively build
lists of vectors $L_0,L_1,\ldots,L_m$.  Each list $L_r$ generates
all admissible vectors that can be formed by (i)~combining vectors from the
quadrilateral solution space and then (ii)~adding {\em or subtracting}
vertex links $\ell(V_1),\ldots,\ell(V_r)$.  In particular, the initial list
$L_0$ is the quadrilateral solution set, and (after appropriate scaling)
the final list $L_m$ becomes the standard solution set.

Each inductive step that transforms $L_r$ into $L_{r+1}$ is based on the
double description method, though complications arise because we do not
have access to the full facet structures of the underlying polyhedral cones.
As we construct each list $L_r$ we essentially ignore all triangular
coordinates around the subsequent vertices
$V_{r+1},\ldots,V_m$, though we do maintain
the standard matching equations at all times.  This selective ignorance
is expressed in the proof through the truncation function $\tau_r$,
and is resolved in the algorithm itself by taking the partial canonical part
$\kappa_v^{(r)}$ when the need arises.


\begin{algorithm} \label{a-quadtostd}
    Suppose we are given the quadrilateral solution set for the
    triangulation $\tri$, and that this quadrilateral solution set
    consists of the $k$ vectors
    $\mathbf{q}_1,\ldots,\mathbf{q}_k \in \R^{3n}$.
    Then the following algorithm computes the standard solution set for $\tri$.

    Let the vertices of $\tri$ be $V_1,\ldots,V_m$.  We construct lists
    of vectors $L_0,L_1,\ldots,L_m \subset \R^{7n}$ as
    follows.\footnote{We use set notation with these lists because, as we
    see in the proof, they contain no duplicate vectors.  We call them
    lists here because the implementation can happily treat them as such;
    in particular, there is no need to explicitly check for duplicates when
    we insert vectors into lists as the algorithm progresses.}
    \begin{enumerate}[1.]
        \item \label{en-convert-ext}
        Fill the list $L_0$ with the canonical extensions
        $\vmap{\mathbf{q}_1}, \ldots, \vmap{\mathbf{q}_k} \in \R^{7n}$,
        using Algorithm~\ref{a-quaddfs} to perform the computations.

        \item \label{en-convert-pos}
        Create a set of coordinate positions
        $C \subseteq \{1,2,\ldots,7n\}$ and initialise this to
        the set of all quadrilateral coordinate positions, so that
        \[ C = \{ 5,6,7,\ 12,13,14,\ \ldots,\ 7n-2,7n-1,7n \} . \]
        This set will grow as the algorithm runs, eventually expanding to
        all of $\{1,2,\ldots,7n\}$.

        \item \label{en-convert-vtx}
        For each $r=1,2,\ldots,m$, fill the list $L_r$ as follows.
        \begin{enumerate}[(a)]
            \item \label{en-convert-vtx-init}
            For each vector $\mathbf{x} \in L_{r-1}$, insert the partial
            canonical part $\canp{r}{\mathbf{x}}$ into $L_r$.

            \item \label{en-convert-vtx-neglink}
            Insert the negative vertex link $-\ell(V_r)$ into $L_r$.

            \item \label{en-convert-vtx-dd}
            Let $T_r \subset \{1,2,\ldots,7n\}$ be the set of all
            coordinate positions corresponding to triangular disc types
            in the vertex link $\ell(V_r)$; that is,
            $T_r=\{p\,|\,\ell(V_r)_p \neq 0\}$.  For each position
            $p \in T_r$, perform the following steps.
            \begin{enumerate}[(i)]
                \renewcommand{\labelitemi}{\normalfont\bfseries \textendash}

                \item Partition the list $L_r$ into three lists
                $S_0$, $S_+$ and $S_-$ according to the sign of
                the $p$th coordinate.  Specifically, let
                $S_0 = \{ \mathbf{x} \in L_r\,|\,x_p = 0\}$,
                $S_+ = \{ \mathbf{x} \in L_r\,|\,x_p > 0\}$ and
                $S_- = \{ \mathbf{x} \in L_r\,|\,x_p < 0\}$.

                \item Create a new temporary list
                $L' = S_0 \cup S_+$.

                \item Run through all pairs of vectors
                $\mathbf{u} \in S_+$ and $\mathbf{w} \in S_-$
                that satisfy both of the following conditions:
                \begin{itemize}
                    \item[--] $\mathbf{u}$ and $\mathbf{w}$
                    together satisfy the quadrilateral constraints.
                    That is, for each tetrahedron $\Delta_i$ of $\tri$,
                    at least two of the three quadrilateral coordinates
                    for $\Delta_i$ are zero in
                    both $\mathbf{u}$ and $\mathbf{w}$ simultaneously.
                    \item[--] There is no vector $\mathbf{z} \in L_r$
                    other than $\mathbf{u}$ and $\mathbf{w}$
                    for which, whenever a coordinate position $i \in C$
                    satisfies both $u_i=0$ and $w_i=0$, then $z_i=0$ also.
                \end{itemize}
                For each such pair, insert the vector
                $(u_p \mathbf{w} - w_p \mathbf{u}) / (u_p - w_p)$
                into the temporary list $L'$.
                Note that this vector is the point where the line
                joining $\mathbf{u}$ and $\mathbf{w}$ meets the
                hyperplane $\{\mathbf{x}\in\R^{7n}\,|\,x_p=0\}$.

                \item Empty out the list $L_r$ and refill it with
                the vectors in $L'$, and insert the coordinate
                position $p$ into the set $C$.
            \end{enumerate}

            \item \label{en-convert-vtx-poslink}
            Finish the list by inserting the positive
            vertex link $\ell(V_r)$ into $L_r$.
        \end{enumerate}
    \end{enumerate}

    Suppose that the very last list $L_m$ consists of the $k'$ vectors
    $\mathbf{v}_1,\ldots,\mathbf{v}_{k'} \in \R^{7n}$.
    Then the standard solution set for $\tri$ consists of the $k'$
    projective images
    $\proj{\mathbf{v}_1},\ldots,\proj{\mathbf{v}_{k'}}$.
\end{algorithm}

Before we embark on a proof that this algorithm is correct, there are a
few points worth noting.
\begin{itemize}
    \item Unlike the previous algorithms in this paper, the statement
    of Algorithm~\ref{a-quadtostd} does not include a time complexity.
    This is because the algorithm can grow exponentially slow with
    respect to the size of the input.

    For examples of this exponential growth the reader is referred to
    \cite{burton09-extreme}, which describes the solution sets for
    the $n$--tetrahedron {\em twisted layered loop},
    a highly symmetric triangulation of
    the quotient space $S^3/Q_{4n}$.  In these examples
    the quadrilateral solution set has size $\Theta(n)$, whereas the
    standard solution set has size $\Theta(\phi^n)$ for $\phi=(1+\sqrt{5})/2$.
    Thus the size of the output is exponential in the size
    of the input, and so the running time of any conversion algorithm
    must be at least this bad.\footnote{Here we use the standard
    notation for complexity whereby $O(\cdot)$ indicates an asymptotic
    upper bound and $\Theta(\cdot)$ indicates an asymptotically tight bound.}

    On the other hand, it is possible---and indeed quite plausible---that
    the running time of Algorithm~\ref{a-quadtostd} is polynomial
    in the size of the {\em output}.  For further discussion, see
    Conjecture~\ref{cj-complexity} later in this section.

    \item Step~\enref{en-convert-vtx}(c) bears a resemblance to
    the double description method of Motzkin et al.~\cite{motzkin53-dd}.
    As discussed earlier,
    this is no accident---in a sense, within each iteration of
    step~\enref{en-convert-vtx} we create a new pointed polyhedral cone
    and then enumerate its admissible extreme rays.
    The differences appear in the processing of pairs
    $\mathbf{u} \in S_+$ and $\mathbf{w} \in S_-$,
    where we deviate from the usual double description method in the
    constraints on $\mathbf{u}$ and $\mathbf{w}$.

    \item As presented, Algorithm~\ref{a-quadtostd} requires exact arithmetic
    on rational numbers, which may be undesirable in practice for reasons of
    performance or implementation.  We can avoid this by observing that
    throughout steps~\enref{en-convert-ext}--\enref{en-convert-vtx}
    we can replace any vector $\mathbf{x}$ with any multiple
    $\lambda \mathbf{x}$ ($\lambda > 0$) without changing the final
    solution set.  This means that we can work entirely within the integers by
    rescaling vectors appropriately.
\end{itemize}


\begin{proof}[Proof of Algorithm~\ref{a-quadtostd}]
    This is a lengthy proof, consisting of two nested inductions
    corresponding to the two nested loops of steps~\enref{en-convert-vtx}
    and~\enref{en-convert-vtx}(c).
    We therefore split this proof into six parts:
    \pref{en-proof-start}--\pref{en-proof-finish} to establish the outer
    induction, and \pref{en-proof-dd-start}--\pref{en-proof-dd-finish}
    to establish the inner induction.  The road map for parts
    \pref{en-proof-start}--\pref{en-proof-finish} is given below.
    Because the inner induction sits within part~\pref{en-proof-ind},
    we delay the road map for parts
    \pref{en-proof-dd-start}--\pref{en-proof-dd-finish} until then.

    Our outer induction proves a statement about the finished lists
    $L_0,\ldots,L_m$.  In order to make this statement, we define
    the space $\ainv{r}$ for each $r=0,\ldots,m$ to be
    \begin{equation}
    \label{eqn-a-defn}
      \ainv{r}\ =\ \left\{\left.
      \mathbf{x} = \sum_{i=1}^k \lambda_i \vmap{\mathbf{q}_i} +
      \sum_{j=1}^r \mu_j \ell(V_j)\ \right|\ \begin{array}{l}
      \lambda_i \geq 0\ \forall i \in 1..k, \\
      x_p \geq 0\ \forall p \in 1..7n
      \end{array}\right\}.
    \end{equation}
    That is, $\ainv{r}$ consists of all non-negative vectors in $\R^{7n}$
    that can be expressed
    as (i)~a non-negative linear combination of the original $k$ vectors
    from the quadrilateral solution space, plus
    (ii)~arbitrary positive {\em or negative} multiples of the first
    $r$ vertex links.\footnote{It can be shown that each $\ainv{r}$ is a
    pointed polyhedral cone, though we do not need this fact here.}
    Our key inductive claim relates the space $\ainv{r}$ to the list $L_r$
    as follows:

    \displayclaim{Once the list $L_r$ is fully constructed, it
        consists only of admissible vectors.  Furthermore, the
        truncation $\tau_r(L_r)$ is a basis for the semi-admissible
        part $\adm{\tau_r(\ainv{r})}$.}

    \noindent
    Our outer induction now proceeds according to the following plan.

    \displayroadmap{\textbf{Road map for parts
        \pref{en-proof-start}--\pref{en-proof-finish}:}
        \begin{enumerate}[I.]
            \item \label{en-proof-start}
            Show that {\claima} is true for $r=0$;
            \item \label{en-proof-ind}
            Show that if {\claima} is true for $r=i-1$ where $i > 0$
            then {\claima} is also true for $r=i$;
            \item \label{en-proof-finish}
            Show that if {\claima} is true for $r=m$ then
            Algorithm~\ref{a-quadtostd} is correct.
        \end{enumerate}
    }
    Because part~\pref{en-proof-ind} is significantly more complex than the
    others (in particular, it contains the inner induction), we shall
    subvert the natural order of things and
    deal with parts~\pref{en-proof-start} and~\pref{en-proof-finish}
    first.

    \proofpart{Part~\pref{en-proof-start}}

    We begin with part~\pref{en-proof-start}, where we must prove
    {\claima} for $r=0$.  Note that $\ainv{0}$ can be written more simply as
    \[ \ainv{0}\ =\ \left\{\left.
      \mathbf{x} = \sum_{i=1}^k \lambda_i \vmap{\mathbf{q}_i}\ \right|
      \ \begin{array}{l}
      \lambda_i \geq 0\ \forall i \in 1..k, \\
      x_p \geq 0\ \forall p \in 1..7n
      \end{array}\right\}. \]
    Furthermore, because each $\vmap{\mathbf{q}_i}$ is admissible
    the constraints $x_p \geq 0$ are redundant, and so
    $\ainv{0}$ is merely the set of all non-negative linear
    combinations of $\vmap{\mathbf{q}_1},\ldots,\vmap{\mathbf{q}_k}$.
    Because truncation is linear, $\tau_0(\ainv{0})$ is likewise the set
    of all non-negative linear combinations of
    $\tau_0(\vmap{\mathbf{q}_1}),\ldots,\tau_0(\vmap{\mathbf{q}_k})$.
    Noting that neither $\tau_0$ nor $\vmapsymbol$ affects quadrilateral
    coordinates, we can make the following observations:
    \begin{itemize}
        \item Suppose that some $\tau_0(\vmap{\mathbf{q}_i}) \in \R^{7n}$
        could be expressed as a linear combination of the others.
        Restricting our attention to quadrilateral
        coordinates\footnote{More precisely, applying the
        linear map $\qmapsymbol$.  Note that
        $\qmapsymbol \circ \tau_0 \circ \vmapsymbol = \iota$, the
        identity map.}
        would therefore give
        some $\mathbf{q}_i \in \R^{3n}$ as a linear combination of the
        others, in violation of Lemma~\ref{l-vertexsplit}.

        \item Since the vectors $\tau_0(\vmap{\mathbf{q}_i})$ are all
        non-zero vectors with non-negative coordinates, they all lie to the
        same side of the hyperplane $\sum x_p = 0$.
    \end{itemize}

    It follows from Lemma~\ref{l-basis-construct} that $\tau_0(\ainv{0})$
    is a pointed polyhedral cone with $\tau_0(L_0)$ as its basis.  Moreover,
    since each $\mathbf{q}_i$ satisfies the quadrilateral constraints
    we have $\adm{\tau_0(L_0)}=\tau_0(L_0)$, and so by Lemma~\ref{l-adm-basis}
    $\tau_0(L_0)$ is a basis for $\adm{\tau_0(\ainv{0})}$ also.
    Finally, it is clear from construction that every vector in $L_0$ is
    admissible.

    \proofpart{Part~\pref{en-proof-finish}}

    We now jump straight to part~\pref{en-proof-finish}, where
    we can ignore truncations entirely because $\tau_m$ is the identity map.
    We assume therefore that $L_m$ is a basis for $\adm{\ainv{m}}$,
    and our task is to prove from this that the projective images of
    the vectors in $L_m$ together form the standard solution set for $\tri$.

    The key observation here is that the semi-admissible part
    $\adm{\ainv{m}}$ is simply $\R^{7n}_a$, the set of all admissible
    vectors in standard coordinates.  To see this:
    \begin{itemize}
        \item Every vector in $\adm{\ainv{m}}$ has non-negative
        coordinates by definition of $\ainv{m}$, and satisfies the
        quadrilateral constraints by definition of $\alpha$.
        Moreover, since every $\vmap{\mathbf{q}_i}$ and $\ell(V_j)$
        satisfies the standard matching equations, so does every vector in
        $\adm{\ainv{m}}$.  Thus $\adm{\ainv{m}} \subseteq \R^{7n}_a$.

        \item Let $\mathbf{x} \in \R^{7n}_a$.
        By Definition~\ref{d-soln-space}, the quadrilateral projection
        $\qmap{\mathbf{x}}$ can be expressed as a
        non-negative combination of vertices of the quadrilateral projective
        solution space $\quadproj$.  More specifically,
        $\qmap{\mathbf{x}}$ can be expressed as a non-negative
        combination of {\em admissible} vertices of $\quadproj$, since
        otherwise $\qmap{\mathbf{x}}$
        would not satisfy the quadrilateral constraints.
        Therefore $\qmap{\mathbf{x}} = \sum \lambda_i \mathbf{q}_i$
        for some $\lambda_i \geq 0$.

        By chasing maps around the commutative diagram in
        Lemma~\ref{l-mappings} and recalling that $\qmapsymbol$ is
        linear and $\vmapsymbol$ preserves scalar multiplication,
        we subsequently derive the equation $\kappa_v(\mathbf{x}) =
        \kappa_v\left(\sum \lambda_i \vmap{\mathbf{q}_i}\right)$.
        That is, $\mathbf{x}$ is a non-negative linear combination of
        $\vmap{\mathbf{q}_i}$ plus some arbitrary linear combination of vertex
        links.  Hence $\mathbf{x} \in \adm{\ainv{m}}$, and we have
        $\R^{7n}_a \subseteq \adm{\ainv{m}}$.
    \end{itemize}

    From here part~\pref{en-proof-finish} is straightforward.  We know
    that $L_m$ is a basis for $\R^{7n}_a=\adm{\ainv{m}}$, and
    from Example~\ref{ex-basis-soln} we know that the standard solution
    set is a basis for $\R^{7n}_a$ also.  Expressing $\R^{7n}_a$
    as the semi-admissible part of a pointed polyhedral cone
    (Example~\ref{ex-basis-soln} again), we can combine
    Lemmata~\ref{l-basis-unique} and~\ref{l-adm-basis} to show that
    the basis for $\adm{\ainv{m}}$ is unique up to scalar multiplication.
    It follows that once we take projective images, the list $L_m$ and
    the standard solution set are identical.

    \proofpart{Part~\pref{en-proof-ind}}

    All that remains is part~\pref{en-proof-ind}, the inductive step.
    Suppose we are constructing the list $L_r$ for some $r > 0$.
    Our outer inductive hypothesis is that
    the list $L_{r-1}$ consists only of admissible vectors and that
    $\tau_{r-1}(L_{r-1})$ is a basis for $\adm{\tau_{r-1}(\ainv{r-1})}$.
    Our task is to prove that, once the list $L_r$ is complete,
    it too consists only of admissible
    vectors with the truncation $\tau_r(L_r)$
    forming a basis for $\adm{\tau_r(\ainv{r})}$.

    To show this, we must dig into the construction of the list $L_r$
    and perform a new {\em inner} induction over the constructive loop in
    step~\enref{en-convert-vtx}(c) of the algorithm.  Suppose the list
    $L_{r-1}$ consists of the vectors $\mathbf{a}_1,\ldots,\mathbf{a}_t$.
    For every set of coordinate positions $P$, we define a new space
    \begin{equation}
    \label{eqn-b-defn}
      \binv{P}\ =\ \left\{
      \mathbf{x} = \sum_{i=1}^t \lambda_i \kappa_v^{(r)}(\mathbf{a}_i)
      - \mu_r \ell(V_r)\ \left|\ \begin{array}{l}
      \lambda_i \geq 0\ \forall i \in 1..t, \\
      \mu_r \geq 0, \\
      x_p \geq 0\ \forall p \in P
      \end{array}\right.\right\}.
    \end{equation}

    Essentially, $\binv{P}$ is constructed by taking non-negative linear
    combinations of (i)~the $r$th partial canonical parts of vectors
    in $L_{r-1}$, and (ii)~the negative vertex link $-\ell(V_r)$.
    Note that we relax our insistence on
    non-negative coordinates---vectors in $\binv{P}$ may include negative
    coordinates, as long as these only occur at coordinate positions outside
    the set $P$.

    Our inner inductive claim is the following.  It should be read as
    a loop invariant that applies before and after each position
    $p \in T_r$ is processed in step~\enref{en-convert-vtx}(c) of
    Algorithm~\ref{a-quadtostd}.

    \displayclaim[\claimb]{
        Every vector $\mathbf{x}$ in the partially-constructed list $L_r$
        satisfies both the standard matching equations and the quadrilateral
        constraints, and has at least one coordinate position $p \in T_r$
        for which $x_p \leq 0$.  Furthermore, the truncation
        $\tau_r(\binv{C})$ is a pointed polyhedral cone,
        and the truncation $\tau_r(L_r)$ forms a basis for the
        semi-admissible part $\adm{\tau_r(\binv{C})}$.}

    Note that both the list $L_r$ and the set $C$ continue to change as
    step~\enref{en-convert-vtx}(c) iterates through each position
    $p \in T_r$.  Our claim is that they both change together in a way
    that maintains the truth of {\claimb}.

    The remainder of this proof proceeds according to the following plan.
    Once again, the context for this plan is that we are currently
    constructing the list $L_r$ in the $r$th iteration of
    step~\enref{en-convert-vtx} of the algorithm.

    \displayroadmap{\textbf{Road map for parts
        \pref{en-proof-dd-start}--\pref{en-proof-dd-finish}:}
        \begin{enumerate}[I.]
            \setcounter{enumi}{3}
            \item \label{en-proof-dd-start}
            Show that {\claimb} is true when we first reach
            step~\enref{en-convert-vtx}(c);
            \item \label{en-proof-dd-ind}
            Show that, when processing some $p \in T_r$ in
            step~\enref{en-convert-vtx}(c),
            if {\claimb} is true before running
            step~\enref{en-convert-vtx}(c)(i)
            then {\claimb} is still true after running
            step~\enref{en-convert-vtx}(c)(iv);
            \item \label{en-proof-dd-finish}
            Show that, if {\claimb} is true after the loop in
            step~\enref{en-convert-vtx}(c) finishes,
            then {\claima} is true at the end of step~\enref{en-convert-vtx}(d).
        \end{enumerate}
    }
    In other words, parts~\pref{en-proof-dd-start} and~\pref{en-proof-dd-ind}
    constitute an inner induction to establish the correctness of the
    invariant {\claimb} throughout the construction of the list $L_r$.
    Part~\pref{en-proof-dd-finish} then uses this invariant to prove
    the outer inductive claim {\claima}, concluding
    part~\pref{en-proof-ind} and the proof of Algorithm~\ref{a-quadtostd}.

    Throughout parts~\pref{en-proof-dd-start}--\pref{en-proof-dd-finish}
    we continue to assume the outer inductive hypothesis; that is,
    that $L_{r-1}$ consists of the admissible
    vectors $\mathbf{a}_1,\ldots,\mathbf{a}_t$, and that the truncation
    $\tau_{r-1}(L_{r-1})$ forms a basis for $\adm{\tau_{r-1}(\ainv{r-1})}$.

    \proofpart{Part~\pref{en-proof-dd-start}}

    We begin our inner induction with part~\pref{en-proof-dd-start},
    at the point where we
    first reach step~\enref{en-convert-vtx}(c).  At this point in the
    algorithm, the relevant variables take the following values:
    \begin{itemize}
        \item $L_r$ consists of $\canp{r}{\mathbf{a}_1}$, \ldots,
        $\canp{r}{\mathbf{a}_t}$ and the negative vertex link $-\ell(V_r)$;
        \item $C$ consists of all quadrilateral coordinate positions,
        as well as the triangular coordinate positions in sets
        $T_1,\ldots,T_{r-1}$.
    \end{itemize}
    Our task is to show that the claim {\claimb} holds true for these values
    of $L_r$ and $C$.

    It is clear from construction that every $\mathbf{x} \in L_r$ has
    at least one coordinate position $p \in T_r$ for which $x_p \leq 0$.
    Moreover, since the outer inductive hypothesis shows that every
    $\mathbf{a}_i$ is admissible, we can see that (i)~every
    $\mathbf{x} \in L_r$ satisfies both the standard matching equations
    and the quadrilateral constraints, and that (ii)~the only
    coordinates in any $\mathbf{x} \in L_r$ that {\em might} be negative are
    those in positions $p \in T_r$.  Noting that $T_r \cap C =
    \emptyset$, the constraint $x_p \geq 0\ \forall p \in C$ in equation
    (\ref{eqn-b-defn}) is therefore redundant in this case, and we simply have
    \begin{equation}
      \label{eqn-b-first}
      \binv{C}\ =\ \left\{\left.
      \mathbf{x} = \sum_{i=1}^t \lambda_i \kappa_v^{(r)}(\mathbf{a}_i)
      - \mu_r \ell(V_r)\ \right|
      \ \begin{array}{l}
      \lambda_i \geq 0\ \forall i \in 1..t, \\
      \mu_r \geq 0
      \end{array}\right\}.
    \end{equation}
    It remains to show that $\tau_r(\binv{C})$ is a pointed polyhedral cone,
    and that $\tau_r(L_r)$ forms a basis for $\adm{\tau_r(\binv{C})}$.

    From (\ref{eqn-b-first})
    it is clear that $\binv{C}$ is the set of all non-negative
    linear combinations of vectors in $L_r$, and thus $\tau_r(\binv{C})$
    is the set of all non-negative linear combinations of vectors in
    $\tau_r(L_r)$.  We now focus on establishing the conditions
    of Lemma~\ref{l-basis-construct} for the list $\tau_r(L_r)$.
    \begin{enumerate}[(i)]
        \item Suppose that some vector in $\tau_r(L_r)$ can be written as a
        non-negative linear combination of the other vectors in
        $\tau_r(L_r)$.  Taking the linear map $\tau_{r-1}$ and recalling
        that $\tau_{r-1} \circ \tau_r = \tau_{r-1}$,
        it follows that the corresponding vector in
        $\tau_{r-1}(L_r)$ can be written as
        the same non-negative linear combination of the other vectors in
        $\tau_{r-1}(L_r)$.

        For each $\kappa_v^{(r)}(\mathbf{a}_i) \in L_r$ we have
        $\tau_{r-1}(\kappa_v^{(r)}(\mathbf{a}_i)) = \tau_{r-1}(\mathbf{a}_i)$,
        and for $-\ell(V_r) \in L_r$ we have
        $\tau_{r-1}(-\ell(V_r))=\mathbf{0}$.
        Thus $\tau_{r-1}(L_r)$ consists of the basis $\tau_{r-1}(L_{r-1})$
        combined
        with the zero vector, and so the only possible non-negative linear
        combination in $\tau_{r-1}(L_r)$ is the trivial combination
        $\tau_{r-1}(-\ell(V_r)) = \mathbf{0}$.  It follows that
        our original non-negative linear combination in $\tau_r(L_r)$ must
        have been $\tau_r(-\ell(V_r)) = \mathbf{0}$, a contradiction.

        \item We aim now to construct a hyperplane $H \subset \R^{7n}$
        for which every vector in $\tau_r(L_r)$ lies strictly to the same
        side of $H$.  To do this, we define the temporary vector
        $\mathbf{u}=\tau_{r-1}(\mathbf{1})$.  That is, $\mathbf{u}$ contains
        1 in all quadrilateral coordinate positions as well as the
        triangle positions $p \in T_1 \cup \ldots \cup T_{r-1}$,
        and contains 0 in the remaining triangle positions
        $p \in T_r \cup \ldots \cup T_m$.  Recall also that the vertex
        link $\ell(V_r)$ contains 1 in all triangle positions $p \in T_r$,
        and contains 0 in all other triangle and quadrilateral positions.

        Define the constants
        \[ g=\min_{i=1}^t \left\{
          \mathbf{u}\cdot\tau_r(\kappa_v^{(r)}(\mathbf{a}_i))\right\}
        \quad\text{and}\quad
        h=\max_{i=1}^t \left\{
          \ell(V_r)\cdot\tau_r(\kappa_v^{(r)}(\mathbf{a}_i))\right\}.\]
        Since $\tau_{r-1}(L_{r-1})$ is a basis of non-negative vectors
        and $\mathbf{u}\cdot\tau_r(\kappa_v^{(r)}(\mathbf{a}_i)) =
        \mathbf{u}\cdot\tau_{r-1}(\mathbf{a}_i)$, it is clear that $g > 0$.
        Furthermore, from the definition of $\kappa_v^{(r)}$ and the
        fact that $\ell(V_r)\cdot\tau_r(\kappa_v^{(r)}(\mathbf{a}_i)) =
        \ell(V_r)\cdot\kappa_v^{(r)}(\mathbf{a}_i)$ it is clear that $h \geq 0$.

        Let $H$ be the hyperplane
        $\{ \mathbf{x}\in\R^{7n}\,|\,
        (h+1)\mathbf{u} \cdot \mathbf{x} = g\,\ell(V_r) \cdot \mathbf{x}\}$.
        We show now that every element of $\tau_r(L_r)$ lies strictly to the
        same side of $H$.
        By definition of $g$ and $h$, if
        $\mathbf{x}=\tau_r(\kappa_v^{(r)}(\mathbf{a}_i)) \in \tau_r(L_r)$
        then we have $(h+1)\mathbf{u} \cdot \mathbf{x} \geq (h+1)g >
        g\,h \geq g\,\ell(V_r) \cdot \mathbf{x}$.  Finally, if
        $\mathbf{x}=\tau_r(-\ell(V_r)) \in \tau_r(L_r)$ then this simplifies
        to $\mathbf{x}=-\ell(V_r)$, and so
        $(h+1)\mathbf{u} \cdot \mathbf{x} = 0 > g\,\ell(V_r) \cdot
        \mathbf{x}$.  Therefore $H$ is the hyperplane that we require.
    \end{enumerate}

    It follows from Lemma~\ref{l-basis-construct} that $\tau_r(\binv{C})$
    is a pointed polyhedral cone with $\tau_r(L_r)$ as its basis.
    Finally, since every $\mathbf{a}_i$ is admissible it is clear that
    every vector of $\tau_r(L_r)$ satisfies the quadrilateral constraints;
    thus $\adm{\tau_r(L_r)} = \tau_r(L_r)$, and Lemma~\ref{l-adm-basis}
    shows that $\tau_r(L_r)$ is a basis for $\adm{\tau_r(\binv{C})}$ also.

    \proofpart{Part~\pref{en-proof-dd-ind}}

    We come now to part~\pref{en-proof-dd-ind}, the main inductive step
    for the inner induction.  Here we assume that {\claimb} holds
    before running step~\enref{en-convert-vtx}(c)(i); our task is to show
    that {\claimb} still holds after running step~\enref{en-convert-vtx}(c)(iv).

    Throughout this part, we assume that we are building the list $L_r$,
    and that we are currently processing some coordinate position $p \in T_r$.
    We use the following notation:
    \begin{itemize}
        \item $L_r$ and $C$ denote the {\em initial} states of these
        variables, before step~\enref{en-convert-vtx}(c)(i).

        \item $S_0$, $S_+$ and $S_-$ are as defined in
        Algorithm~\ref{a-quadtostd}; that is,
        $S_0 = \{ \mathbf{x} \in L_r\,|\,x_p = 0\}$,
        $S_+ = \{ \mathbf{x} \in L_r\,|\,x_p > 0\}$ and
        $S_- = \{ \mathbf{x} \in L_r\,|\,x_p < 0\}$.

        \item $L'$ denotes the final state of the list after
        step~\enref{en-convert-vtx}(c)(iv).  In other words,
    \end{itemize}
    \begin{equation}
    \label{eqn-dd-ldash}
    L' = S_0 \cup S_+ \cup \left\{
        \frac{u_p\mathbf{w}-w_p\mathbf{u}}{u_p-w_p}\,\left|\,
        \begin{array}{l}
        \mathbf{u} \in S_+\ \text{and}\ \mathbf{w} \in S_-, \\
        \text{$\mathbf{u},\mathbf{w}$ together satisfy the
          quad.~constraints,} \\
        \nexists \mathbf{z} \in L_r\backslash\{\mathbf{u},\mathbf{w}\}
          \ \text{for which} \\
        \quad \left(i \in C \cup \{p\}\ \text{and}\ u_i=w_i=0\right)
          \Rightarrow z_i=0
        \end{array}
    \right.\right\}.
    \end{equation}
    In addition, we note that the final state of the set $C$ is simply
    $C \cup \{p\}$.
    We can therefore assume claim {\claimb} exactly as written, and our
    task is to prove the following:
    \begin{enumerate}[(a)]
        \item \label{en-ind-dd-vecprop}
        Every $\mathbf{x} \in L'$ satisfies both the standard
        matching equations and the quadrilateral constraints, and has at
        least one coordinate position $p' \in T_r$ for which $x_{p'} \leq 0$;
        \item \label{en-ind-dd-cone}
        The truncation $\tau_r(\binv{C \cup \{p\}})$ is a
        pointed polyhedral cone;
        \item \label{en-ind-dd-basis}
        The truncation $\tau_r(L')$ forms a basis for
        $\adm{\tau_r(\binv{C \cup \{p\}})}$.
    \end{enumerate}
    Claim~(\enref{en-ind-dd-vecprop}) is straightforward; these properties
    are already known to be true for all vectors in $S_0,S_+,S_- \subseteq L_r$,
    and it is clear by construction that they also hold for vectors new
    to $L'$.  In particular, $x_p=0$ for each new vector
    $\mathbf{x} = (u_p\mathbf{w}-w_p\mathbf{u})/(u_p-w_p)$.
    Claim~(\enref{en-ind-dd-cone}) is also straightforward, since
    equation~(\ref{eqn-b-defn}) shows that $\tau_r(\binv{C \cup \{p\}})$
    is the intersection of the pointed polyhedral cone $\tau_r(\binv{C})$
    with the half-space $x_p \geq 0$.  We therefore focus our efforts
    on proving~(\enref{en-ind-dd-basis}), that is, that
    $\tau_r(L')$ forms a basis for $\adm{\tau_r(\binv{C \cup \{p\}})}$.

    We know from {\claimb} that $L_r$ forms a basis for
    $\adm{\tau_r(\binv{C})}$, where $\tau_r(\binv{C})$ is a pointed polyhedral
    cone.  From Lemma~\ref{l-adm-basis}, there is a basis $M$ for
    $\tau_r(\binv{C})$ for which $L_r = \adm{M}$.  As noted earlier,
    the final polyhedral cone $\tau_r(\binv{C \cup \{p\}})$ is simply
    $\tau_r(\binv{C})$ intersected with the half-space $x_p \geq 0$;
    our plan is to use this fact to convert $M$ into a basis for
    $\tau_r(\binv{C \cup \{p\}})$ and then a basis for
    $\adm{\tau_r(\binv{C \cup \{p\}})}$, which we will see
    is simply the final list $L'$.

    To convert $M$ into a basis for $\tau_r(\binv{C \cup \{p\}})$,
    we call upon the regular double description method.
    Just as $M$ is a superset of $L_r$, we define the supersets
    $M_0 = \{\mathbf{m} \in M\,|\,m_p = 0\} \supseteq S_0$,
    $M_+ = \{\mathbf{m} \in M\,|\,m_p > 0\} \supseteq S_+$ and
    $M_- = \{\mathbf{m} \in M\,|\,m_p < 0\} \supseteq S_-$.
    Lemma~\ref{l-dd-core} then shows that the following
    is a basis for $\tau_r(\binv{C \cup \{p\}}) =
    \tau_r(\binv{C}) \cap \{\mathbf{x}\,|\,x_p \geq 0\}$:
    \begin{equation*}
    M' = M_0 \cup M_+ \cup \left\{
        \frac{u_p\mathbf{w} - w_p\mathbf{u}}{u_p - w_p}
        \,\left|\,
        \begin{array}{l}
        \mathbf{u} \in M_+\ \text{and}\ \mathbf{w} \in M_-, \\
        \text{$\mathbf{u},\mathbf{w}$ are adjacent basis vectors
          in $\tau_r(\binv{C})$}
        \end{array}
    \right.\right\}.
    \end{equation*}
    Using Lemma~\ref{l-adm-basis} and the observation that
    $u_p > 0 > w_p$, a corresponding basis for the
    semi-admissible part $\adm{\tau_r(\binv{C \cup \{p\}})}$ is
    \begin{equation*}
    \adm{M'} = S_0 \cup S_+ \cup \left\{
        \frac{u_p\mathbf{w} - w_p\mathbf{u}}{u_p - w_p}
        \,\left|\,
        \begin{array}{l}
        \mathbf{u} \in S_+\ \text{and}\ \mathbf{w} \in S_-, \\
        \text{$\mathbf{u},\mathbf{w}$ together satisfy the
          quad.~constraints,} \\
        \text{$\mathbf{u},\mathbf{w}$ are adjacent basis vectors
          in $\tau_r(\binv{C})$}
        \end{array}\right.\right\}.
    \end{equation*}
    Consider the following claim, which we will prove shortly.
    \displayclaim[\claimc]{Suppose $\mathbf{u}$ and $\mathbf{w}$ are
        basis vectors for $\tau_r(\binv{C})$ that together satisfy the
        quadrilateral constraints.  Then $\mathbf{u}$ and $\mathbf{w}$
        are adjacent if and only if there is no
        $\mathbf{z} \in M\backslash\{\mathbf{u},\mathbf{w}\}$ for which,
        whenever $i \in C$ and $u_i=w_i=0$, we must have $z_i=0$.}
    \noindent
    If this is true, then our basis for $\adm{\tau_r(\binv{C \cup \{p\}})}$
    can be rewritten as
    \begin{equation*}
    \adm{M'} = S_0 \cup S_+ \cup \left\{
        \frac{u_p\mathbf{w} - w_p\mathbf{u}}{u_p - w_p}
        \,\left|\,
        \begin{array}{l}
        \mathbf{u} \in S_+\ \text{and}\ \mathbf{w} \in S_-, \\
        \text{$\mathbf{u},\mathbf{w}$ together satisfy the
          quad.~constraints,} \\
        \nexists \mathbf{z} \in M\backslash\{\mathbf{u},\mathbf{w}\}
          \ \text{for which} \\
        \quad \left(i \in C\ \text{and}\ u_i=w_i=0\right) \Rightarrow z_i=0
        \end{array}\right.\right\}.
    \end{equation*}
    Because $C$ contains all quadrilateral positions, we can change
    $\mathbf{z} \in M\backslash\{\mathbf{u},\mathbf{w}\}$ in the final
    condition above to $\mathbf{z} \in L_r\backslash\{\mathbf{u},\mathbf{w}\}$.
    Furthermore, because $u_p,w_p \neq 0$ we can change $i \in C$ in this same
    condition to $i \in C \cup \{p\}$.  The equation then becomes identical
    to (\ref{eqn-dd-ldash}), and we see that our basis $\adm{M'}$ is
    indeed the final list $L'$.

    The only thing now remaining for part~\pref{en-proof-dd-ind} is to
    prove the claim {\claimc}.  We do this using
    Lemma~\ref{l-basis-adj-lincomb}.

    Suppose that $\mathbf{u}$ and $\mathbf{w}$ are {\em not} adjacent
    basis vectors in $\tau_r(\binv{C})$.  By Lemma~\ref{l-basis-adj-lincomb}
    there is some $\mathbf{x} \in \tau_r(\binv{C})$ and some coefficients
    $\mu,\eta,\lambda_j \geq 0$ for which
    $\mathbf{x} = \mu \mathbf{u} + \eta \mathbf{w} = \sum \lambda_j
    \mathbf{b}_j$, where each $\mathbf{b}_j$ is a basis vector for
    $\tau_r(\binv{C})$ and where $\lambda_j > 0$ for some
    $\mathbf{b}_j \neq \mathbf{u},\mathbf{w}$.
    Because the $i$th coordinate of every basis vector is non-negative
    for every $i \in C$, it follows that whenever
    $u_i=w_i=0$ for $i \in C$ we must have $(\mathbf{b}_j)_i=0$ also.
    Therefore $\mathbf{b}_j$ satisfies the conditions for $\mathbf{z}$
    as specified in {\claimc}.

    Suppose now that $\mathbf{u}$ and $\mathbf{w}$ {\em are} adjacent
    basis vectors in $\tau_r(\binv{C})$ that together satisfy the
    quadrilateral constraints, and that $\mathbf{z}$ is some
    different basis vector for which, whenever $i \in C$ and $u_i=w_i=0$,
    we have $z_i=0$ also.  We show that this leads to a contradiction.

    Let $\mathbf{x}=\mathbf{u}+\xi\mathbf{w}$ where $\xi>0$ is
    chosen so that $x_p < 0$, and let
    $\mathbf{y}=\mathbf{x}-\epsilon\mathbf{z}$ for some small $\epsilon>0$.
    If $\mathbf{y} \in \tau_r(\binv{C})$ then we can express
    $\mathbf{y}=\sum \lambda_i \mathbf{b}_i$, where each $\lambda_i \geq 0$
    and each $\mathbf{b}_i$ is a basis vector for $\tau_r(\binv{C})$.
    This gives us
    $\mathbf{x} = \mathbf{u}+\xi\mathbf{w} =
    \epsilon\mathbf{z}+\sum \lambda_i \mathbf{b}_i$, whereupon
    Lemma~\ref{l-basis-adj-lincomb} shows that
    $\mathbf{u}$ and $\mathbf{w}$ cannot be adjacent, giving us the
    contradiction that we seek.

    It remains to prove that $\mathbf{y} \in \tau_r(\binv{C})$.
    The condition on $\mathbf{z}$ ensures that for
    sufficiently small $\epsilon>0$ we have $y_i \geq 0$ for all $i \in C$,
    so all we need to show is that $\mathbf{y}$ can be expressed as a
    linear combination
    $\mathbf{y} = \sum_i\lambda_i\kappa_v^{(r)}(\mathbf{a}_i)-\mu_r\ell(V_r)$
    for $\lambda_i,\mu_r \geq 0$.

    From (\ref{eqn-b-defn}) all vectors in $\binv{C}$ satisfy the
    standard matching equations.  Since $\mathbf{y}$ is a linear combination
    of vectors in $\tau_r(\binv{C})$, it follows that
    $\mathbf{y}=\tau_r(\mathbf{y}')$ for some $\mathbf{y}' \in \R^{7n}$
    that also satisfies the standard matching equations.  Because
    $y_i \geq 0$ for all $i \in C$ we see that both $\mathbf{y}$ and
    $\mathbf{y}'$ satisfy the quadrilateral constraints, and that
    $\mathbf{y}' + \sum_{i=r}^m \zeta_i \ell(V_i)$ is a
    non-negative vector for some coefficients $\zeta_r,\ldots,\zeta_m \in \R$.
    Therefore $\mathbf{y}' + \sum_{i=r}^m \zeta_i \ell(V_i)$ is admissible.

    It follows that $\qmap{\mathbf{y}' + \sum_{i=r}^m \zeta_i \ell(V_i)}
    =\qmap{\mathbf{y}'} \in \R^{3n}$ can be expressed as a non-negative
    linear combination of vectors in the quadrilateral solution set,
    and so from equation~(\ref{eqn-a-defn}) we see that
    $\tau_{r-1}(\mathbf{y}' + \sum_{i=r}^m \zeta_i \ell(V_i)) =
    \tau_{r-1}(\mathbf{y}') \in \tau_{r-1}(\ainv{r-1})$.
    Using the quadrilateral constraints for $\mathbf{y}'$ we then obtain
    $\tau_{r-1}(\mathbf{y}') \in \adm{\tau_{r-1}(\ainv{r-1})}$, and so
    $\tau_{r-1}(\mathbf{y}') =
    \sum_{i=1}^t \lambda_i\tau_{r-1}(\mathbf{a}_i)$ for some
    $\lambda_1,\ldots,\lambda_t \geq 0$.

    Because $\mathbf{y'}$ is admissible, Lemma~\ref{l-trunc-error}
    shows that the only error we can
    introduce by replacing $\tau_{r-1}$ with $\tau_r$ is a multiple
    of the vertex link $\ell(V_r)$.  Therefore
    $\mathbf{y} = \tau_r(\mathbf{y}') =
    \sum_{i=1}^t \lambda_i\tau_r(\mathbf{a}_i) + \mu \ell(V_r)$
    for some coefficient $\mu \in \R$.  Since the partial canonical part
    $\kappa_v^{(r)}$ only adds or subtracts multiples of $\ell(V_r)$,
    we can rewrite this as
    $\mathbf{y} =
    \sum_{i=1}^t\lambda_i\kappa_v^{(r)}(\tau_r(\mathbf{a}_i))+\mu'\ell(V_r)$.
    Finally, because we chose $\mathbf{x}$ and $\mathbf{y}$ to satisfy
    $y_p \leq x_p < 0$ we must have $\mu'<0$, and equation~(\ref{eqn-b-defn})
    shows that $\mathbf{y} \in \tau_r(\binv{C})$ as required.

    \proofpart{Part~\pref{en-proof-dd-finish}}

    Moving to the final part~\pref{en-proof-dd-finish}, we now assume that
    {\claimb} holds at the end of step~\enref{en-convert-vtx}(c) of the
    algorithm; our task then is to prove that {\claima} holds at the end of
    step~\enref{en-convert-vtx}(d).

    At this stage of the algorithm, the set $C$ contains all positions
    $p \in T_r$ (amongst others).  Consider any $\mathbf{x} \in \binv{C}$.
    We know that $\mathbf{x}$ can be expressed as a non-negative vector
    minus $\mu_r \ell(V_r)$, but we also know that
    $x_p \geq 0$ for all $p \in C \supseteq T_r$.  It follows that every
    $\mathbf{x} \in \binv{C}$ is a non-negative vector, and so in this
    case we can write $\binv{C}$ as
    \begin{equation}
      \label{eqn-b-last}
      \binv{C}\ =\ \left\{
      \mathbf{x} = \sum_{i=1}^t \lambda_i \kappa_v^{(r)}(\mathbf{a}_i)
      - \mu_r \ell(V_r)\ \left|\ \begin{array}{l}
      \lambda_i \geq 0\ \forall i \in 1..t, \\
      \mu_r \geq 0, \\
      x_p \geq 0\ \forall p \in 1..7n
      \end{array}\right.\right\}.
    \end{equation}
    That is, we can replace the specific condition
    $x_p \geq 0\ \forall p \in C$ with the more general condition
    $x_p \geq 0\ \forall p \in 1..7n$.

    We pick off the easy part of {\claima} first.  From {\claimb} we know
    that after step~\enref{en-convert-vtx}(c)
    every $\mathbf{x} \in L_r$ satisfies both the standard matching
    equations and the quadrilateral constraints, and from (\ref{eqn-b-last})
    every $\mathbf{x} \in L_r$ is a non-negative vector also.
    Thus $L_r$ consists only of admissible vectors, and inserting the
    vertex link in step~\enref{en-convert-vtx}(d) does not change this fact.

    It remains to prove that $\tau_r(L_r)$ forms a basis for
    $\adm{\tau_r(\ainv{r})}$.  We do this directly through
    Definition~\ref{d-basis}.
    \begin{enumerate}[(i)]
        \item At the end of step~\enref{en-convert-vtx}(c) of the algorithm,
        we know from {\claimb} that $\tau_r(L_r)$ forms a basis for
        $\adm{\tau_r(\binv{C})}$.  It follows that
        $\tau_r(L_r) \subseteq \adm{\tau_r(\binv{C})}$, and that
        every $\mathbf{x} \in \adm{\tau_r(\binv{C})}$ can be expressed as a
        non-negative linear combination of vectors in $\tau_r(L_r)$.
        We aim to show the same for every
        $\mathbf{x} \in \adm{\tau_r(\ainv{r})}$
        at the end of step~\enref{en-convert-vtx}(d).

        It can be seen from the definition of $\ainv{r}$ that
        \begin{align*}
        \ainv{r}\ &=\ \{ \mathbf{x}=\mathbf{a} + \mu \ell(V_r)\,|\,
           \mathbf{a} \in \ainv{r-1}\ \text{and}
           \ x_p \geq 0\ \forall p \in 1..7n \},\ \text{and hence} \\
        \adm{\tau_r(\ainv{r})}\ &=\ \{ \mathbf{x}=\mathbf{a} +
           \mu \ell(V_r)\,|\,\mathbf{a} \in \adm{\tau_r(\ainv{r-1})}
           \ \text{and}\ x_p \geq 0\ \forall p \in 1..7n \}.
        \end{align*}
        We now call upon the outer inductive hypothesis; in particular,
        the fact that $\tau_{r-1}(L_{r-1})$ is a
        basis for $\adm{\tau_{r-1}(\ainv{r-1})}$.  Combining this with
        Lemma~\ref{l-trunc-error} to replace $\tau_{r-1}$ with $\tau_r$,
        our equation becomes
        \begin{equation*}
          \adm{\tau_r(\ainv{r})}\ =\ \alpha\left(\left\{
          \mathbf{x} = \sum_{i=1}^t \lambda_i \tau_r(\mathbf{a}_i)
          + \mu \ell(V_r)\ \left|\ \begin{array}{l}
          \lambda_i \geq 0\ \forall i \in 1..t, \\
          x_p \geq 0\ \forall p \in 1..7n
          \end{array}\right.\right\}\right).
        \end{equation*}
        Finally, using equation~(\ref{eqn-b-last}) and the fact that
        $\kappa_v^{(r)}$ only ever adds or subtracts copies of $\ell(V_r)$,
        we obtain
        \[ \adm{\tau_r(\ainv{r})} =
           \{\mathbf{b} + \mu \ell(V_r)\,|
           \,\mathbf{b} \in \adm{\tau_r(\binv{C})}\ \text{and}
           \ \mu \geq 0\}. \]

        That is, $\adm{\tau_r(\ainv{r})}$ consists of
        all non-negative linear combinations of (a)~vectors in
        $\adm{\tau_r(\binv{C})}$, and (b)~the vertex link $\ell(V_r)$.
        It follows from {\claimb} that, once we insert the vertex link into
        $L_r$ in step~\enref{en-convert-vtx}(d) of the algorithm, we know that
        $\tau_r(L_r) \subseteq \adm{\tau_r(\ainv{r})}$ and that
        every $\mathbf{x} \in \adm{\tau_r(\ainv{r})}$ can be expressed as a
        non-negative linear combination of vectors in $\tau_r(L_r)$.

        \item We now show that, after step~\enref{en-convert-vtx}(d)
        of the algorithm, no vector in $\tau_r(L_r)$ can be
        expressed as a non-negative linear combination of the others.
        Let $L_r'$ be the list $L_r$ as it was immediately after
        step~\enref{en-convert-vtx}(c) (that is, without the vertex link);
        from {\claimb} we know this property is true for $\tau_r(L_r')$.
        Denote the vectors in $L_r'$ as $\mathbf{b}_1,\ldots,\mathbf{b}_q$.

        Suppose that some vector in $\tau_r(L_r)$ can be expressed
        as a non-negative linear combination of the others.
        Because the list $\tau_r(L_r)$ contains only the basis elements
        $\tau_r(\mathbf{b}_1),\ldots,\tau_r(\mathbf{b}_q)$ and the
        vertex link $\ell(V_r)$,
        our expression must be of one of the following two types:
        \begin{itemize}
            \item $\tau_r(\mathbf{b}_i) =
            \sum_{j \neq i} \lambda_j \tau_r(\mathbf{b}_j)
            + \mu \ell(V_r)$ for $\lambda_j \geq 0$ and $\mu > 0$.
            That is, the vertex link $\ell(V_r)$ appears as a non-empty
            part of this linear combination.

            \smallskip

            Because $\mathbf{b}_i$ is a non-negative vector,
            the clause $x_p \leq 0$ in {\claimb} implies that
            $\kappa_v(\mathbf{b}_i) = \mathbf{b}_i$.
            However, because every $\mathbf{b}_j$ is also a non-negative
            vector, the presence of the vertex link on the right hand
            side above implies that
            \[ \kappa_v\left(\sum_{j \neq i} \lambda_j \tau_r(\mathbf{b}_j)
            + \mu \ell(V_r)\right) \neq
            \sum_{j \neq i} \lambda_j \tau_r(\mathbf{b}_j) + \mu \ell(V_r). \]
            That is, $\kappa_v(\mathbf{b}_i) \neq \mathbf{b}_i$, giving us
            a contradiction.

            \item $\ell(V_r) = \sum_j \lambda_j \tau_r(\mathbf{b}_j)$
            for $\lambda_j \geq 0$.  That is, the vertex link $\ell(V_r)$
            can be expressed as a non-negative linear combination of
            truncated vectors in $L_r'$.

            \smallskip

            Since all $\mathbf{b}_j$ are non-negative, every
            $\mathbf{b}_j$ that features in this linear combination
            must have all its quadrilateral coordinates equal to zero.
            Each such $\mathbf{b}_j$ is also admissible, whereupon
            Lemma~\ref{l-mappings} can be used to show it is a non-negative
            combination of vertex links.  More precisely,
            non-negativity again shows that
            each corresponding $\tau_r(\mathbf{b}_j)$ must be a
            multiple of the single vertex link $\ell(V_r)$.  However,
            this yields the expression $\tau_r(\mathbf{b}_j) = \mu \ell(V_r)$,
            which we have shown above to be impossible.
        \end{itemize}
    \end{enumerate}
    This concludes the requirements for Definition~\ref{d-basis},
    whereupon we see that $\tau_r(L_r)$ must form a basis for
    $\adm{\tau_r(\ainv{r})}$.  Indeed, this also concludes
    part~\pref{en-proof-dd-finish}, and therefore the entire proof of
    Algorithm~\ref{a-quadtostd}.
\end{proof}

\subsection{Time Complexity and the Enumeration Algorithm}
\label{s-quadtostd-consequences}

We now return to the issue of time complexity, which was raised briefly
following the statement of the quadrilateral-to-standard solution set
conversion algorithm (Algorithm~\ref{a-quadtostd}).
It has already been noted that
this conversion algorithm can grow exponentially slow in the size of the
input; it is also seen in \cite{burton08-dd} that the enumeration algorithms
for the standard and quadrilateral solution space suffer from the same problem.

We have already discussed examples where the size of the standard solution
space is exponential in $n$ (punishing the enumeration algorithm)
and also exponential in the size of the quadrilateral solution space
(punishing the conversion algorithm).  However, this is not our worst
problem.  The intermediate lists that are created by these algorithms can
potentially grow exponentially large with respect to both the
input {\em and} the output, leading to situations where both the standard and
quadrilateral solution space are very small, yet the enumeration algorithms
take a very long time to run.

The root of the problem lies in the double description method, upon which
the enumeration algorithms are built.  Using
Lemma~\ref{l-dd-core}, the double description method inductively builds
a series of lists, the last of which becomes the standard or quadrilateral
solution space.  It is well known that the double description method can
suffer from a combinatorial explosion, where the intermediate lists
can grow exponentially large before shrinking back down
to what might be a very small output set.  See
\cite{avis97-howgood-compgeom,fukuda96-doubledesc} for discussions of how this
combinatorial explosion can be tamed in general, and \cite{burton08-dd}
for techniques specific to normal surface enumeration.

Because the quadrilateral-to-standard conversion algorithm incorporates
aspects of the double description method, one should expect it to suffer
from the same problems.  However, empirical evidence suggests that it does
not---in Section~\ref{s-expt} we find that the intermediate lists in
Algorithm~\ref{a-quadtostd} appear {\em not} to explode in size (never
growing larger than $1\frac12$ times the output size), and that the total
running time for conversion appears to be negligible in comparison to
enumeration.  In light of these observations, we put forward the
following proposal.

\begin{conjecture} \label{cj-complexity}
    The time complexity of Algorithm~\ref{a-quadtostd} is at worst
    polynomial in the size of the output.  That is, the running time
    is at most a polynomial function of $n$ (the number of tetrahedra)
    and $k'$ (the size of the standard solution set).
\end{conjecture}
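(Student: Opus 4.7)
The plan is to reduce the conjecture to a purely combinatorial bound on the sizes of the intermediate lists $L_r$. Each iteration of step~\enref{en-convert-vtx}(c) examines $O(|L_r|^2)$ pairs $(\mathbf{u}, \mathbf{w})$ and does $O(n)$ work per pair; the outer loops over $p \in T_r$ and over the vertices $V_1, \ldots, V_m$ contribute only low-degree polynomial factors in $n$. Therefore, if one can establish a bound of the form $\max_r |L_r| = \mathrm{poly}(n, k')$ with $k' = |L_m|$ the size of the standard solution set, then the total running time of Algorithm~\ref{a-quadtostd} is automatically polynomial in $n$ and $k'$, which is what Conjecture~\ref{cj-complexity} asserts.

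First I would exploit the invariant~\claimb\ together with Lemma~\ref{l-adm-basis} to identify $\tau_r(L_r)$ with the admissible basis of a specific pointed polyhedral cone sandwiched between $\tau_r(\ainv{r-1})$ and $\tau_r(\ainv{r})$, and then to build a structural map from this basis into data attached to the final cone $\R^{7n}_a$, whose admissible basis is exactly the standard solution set of size $k'$. The next step would be to analyse precisely the effect of the two extra pruning conditions in step~\enref{en-convert-vtx-dd}: the requirement that $\mathbf{u}, \mathbf{w}$ jointly satisfy the quadrilateral constraints, and the requirement that no witness $\mathbf{z} \in L_r$ kills every coordinate in $C \cup \{p\}$ at which both are zero. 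These conditions are the only features that distinguish Algorithm~\ref{a-quadtostd} from the plain double description method, so any proof of the conjecture must route through them. A promising target would be an injection from $L_r$, modulo a constant number of scaffolding vectors such as $\pm \ell(V_r)$, into the set of pairs of adjacent basis vectors of $\adm{\R^{7n}_a}$ that meet a mild compatibility condition relative to $V_r$.

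The main obstacle is precisely this injection. The plain double description method is well known to suffer from intermediate blow-up even when both input and output are small, so without a structural feature of the cones $\tau_r(\binv{C})$ that genuinely uses the quadrilateral constraints, there is no a~priori reason why the restricted variant used here should behave any better. The most attractive, though currently unverified, avenue is to show that the admissible portion of each intermediate cone is ``almost pointed modulo the quadrilateral constraints'', in the sense that its extreme rays project injectively into the union of the face lattices of the cells cut out from $\R^{7n}_a$ by the quadrilateral hyperplanes. If such a statement can be made precise and proved, one would obtain $|L_r| = O(k'^2)$ and hence a running time cubic in $n$ and quadratic in $k'$, consistent with the experimental observations reported in Section~\ref{s-expt}.
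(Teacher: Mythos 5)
This statement is a conjecture, and the paper does not prove it either; the paper offers only experimental evidence (Section~\ref{s-expt}) showing that the intermediate lists $L_0,\ldots,L_m$ never exceed about $\frac32$ times the output size in the 500 test cases, together with a log--log regression suggesting a running time near $k'^{2.5}$. Your proposal is therefore correctly calibrated: you do not claim a proof, and you honestly name the obstacle. Moreover, your reduction matches the paper's informal reasoning exactly. The paper observes that Conjecture~\ref{cj-complexity} would follow immediately if the intermediate lists were bounded linearly (or indeed polynomially) in $k'$, and you phrase the same reduction via the pair-enumeration structure of step~\enref{en-convert-vtx}(c), concluding that a bound $\max_r |L_r| = \mathrm{poly}(n,k')$ suffices. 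That part is sound.

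Where your proposal goes beyond the paper is in speculating about \emph{why} such a bound might hold, namely the two pruning conditions that distinguish Algorithm~\ref{a-quadtostd} from plain double description, and the idea of an injection from $L_r$ into pairs of adjacent admissible basis vectors of $\R^{7n}_a$. This is a reasonable direction but, as you acknowledge, unverified: it is not obvious that the adjacency witness used in step~\enref{en-convert-vtx-dd}(c) remains detectable after passage to the final cone, since the face lattice of $\tau_r(\binv{C})$ is not in general a sublattice of that of $\R^{7n}_a$ (truncation and the partial canonical parts break the standard matching equations in controlled but nontrivial ways, see Lemma~\ref{l-trunc-error}). So the gap you flag is real, and it is the same gap the paper leaves open. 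One small clarification worth making: you describe $\tau_r(\binv{C})$ as ``sandwiched between $\tau_r(\ainv{r-1})$ and $\tau_r(\ainv{r})$,'' but the relevant cone changes within the loop over $p \in T_r$ as $C$ grows, so there is a chain of cones at each $r$, not a single sandwiched cone; the invariant~\claimb{} tracks the whole chain. This does not affect the reduction, but it matters for any attempted injection argument since the adjacency structure can change as each new half-space $x_p \geq 0$ is cut.
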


More specifically, it seems reasonable to believe based on experimental
evidence that the intermediate lists for Algorithm~\ref{a-quadtostd}
are at worst linear in $k'$, from which Conjecture~\ref{cj-complexity}
would follow as an immediate consequence.  A possible cause could
be the highly structured ways in which the intermediate polyhedral cones
$\ainv{r}$ and $\binv{C}$ are formed in the proof of
Algorithm~\ref{a-quadtostd}.

We finish this section with the new enumeration algorithm that was promised in
the introduction and again at the beginning of Section~\ref{s-quadtostd}.
Specifically, we use Algorithm~\ref{a-quadtostd} as a key component in a new
algorithm for enumerating the standard solution space.  As discussed in the
introduction, the enumeration problem has great practical significance
in normal surface theory but suffers from the feasibility problems
of an exponential running time.
In this context, the new algorithm below is a
significant improvement---we find in Section~\ref{s-expt} that for large
cases it runs thousands and even millions of times
faster than the current state-of-the-art.

This current state-of-the-art is described in \cite{burton08-dd};
essentially we begin with the double description method of
Motzkin et al.~\cite{motzkin53-dd}, apply the filtering techniques of
Letscher, and then incorporate a range of further improvements that exploit
special properties of the normal surface enumeration problem.
We refer to this modified double description method
as {\em direct enumeration}.

Our new enumeration algorithm combines direct enumeration with
Algorithm~\ref{a-quadtostd}, and runs as follows.

\begin{algorithm} \label{a-enumstd}
    To compute the standard solution set for the triangulation $\tri$,
    we can use the following algorithm.
    \begin{enumerate}[1.]
        \item \label{en-enumstd-quad}
        Use direct enumeration to compute the
        quadrilateral solution set for $\tri$.
        \item \label{en-enumstd-convert}
        Use Algorithm~\ref{a-quadtostd} to convert this
        quadrilateral solution set into the
        standard solution set for $\tri$.
    \end{enumerate}
\end{algorithm}

We expect this algorithm to perform well---although the direct enumeration
in quadrilateral coordinates (step~\enref{en-enumstd-quad}) remains
exponentially slow, in practice it runs many orders of magnitude faster
than a direct enumeration in standard coordinates \cite{burton08-dd}.
Following this, the quadrilateral-to-standard conversion
(step~\enref{en-enumstd-convert}) is found to run extremely quickly,
as discussed above.

All that remains is to test these claims in practice, which brings us to the
final section of this paper.

\section{Measuring Performance} \label{s-expt}

To conclude this paper we measure the performance of our new algorithms
through a series of practical tests.  These tests involve running both old
and new algorithms over 500 different triangulations, taking a variety of
measurements along the way.

The triangulations chosen for these tests are the first 500 orientable
triangulations from the Hodgson--Weeks closed hyperbolic census
\cite{hodgson94-closedhypcensus}; their sizes range from 9 to 25 tetrahedra.
All computations were performed on a single 2.3\,GHz AMD Opteron processor
using the software package {\regina} \cite{regina,burton04-regina}.
There are alternative implementations of normal surface enumeration available,
notably the {\fxrays} software by Culler and Dunfield \cite{fxrays};
we use {\regina} here because, with the improvements of \cite{burton08-dd},
it is found in the author's experience to have the greater efficiency in
both time and memory for large triangulations.\footnote{This observation
concerns direct enumeration (prior to this paper).
As seen in the following graphs, the new algorithms
developed in this paper are significantly more efficient again.}

Our first tests compare running times for the new enumeration algorithm in
standard coordinates (Algorithm~\ref{a-enumstd}) against the old
state-of-the-art (the modified double description method
of \cite{burton08-dd}, referred to earlier as ``direct enumeration'').
The following observations can be made:

\begin{itemize}
    \item
    Figure~\ref{fig-speed} plots new running times directly against old
    running times, with one point for each of the 500 triangulations.
    Both axes use a log scale, since running times for both algorithms
    are spread out across several orders of magnitude.
    The diagonal lines are guides to illustrate the
    magnitude of the improvements.

    \begin{figure}[htb]
    \centerline{\includegraphics[scale=0.55]{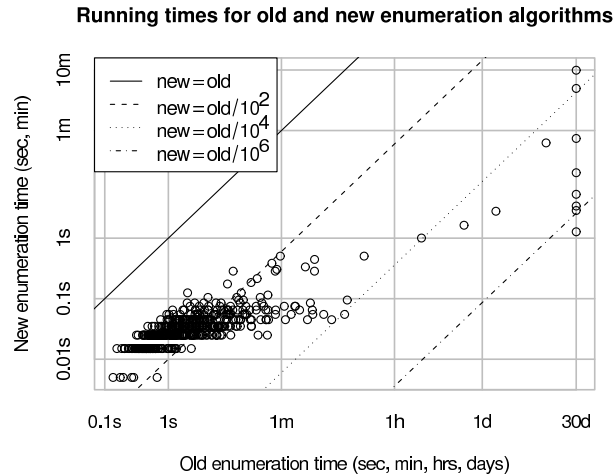}}
    \caption{Comparing the old direct enumeration
        against the new Algorithm~\ref{a-enumstd}}
    \label{fig-speed}
    \end{figure}

    It is immediately clear that the new algorithm is faster, and
    significantly so.  The weakest improvement is still over $10$
    times the speed, and the strongest is over $2\,000\,000$ times.
    Roughly speaking, the largest cases experience the
    greatest improvements (which is what we hope for).
    Some additional points worth noting:
    \begin{itemize}
        \item[--] The resolution of the timer is $0.01$ seconds.  This
        explains the long horizontal clumps in the bottom-left corner of
        the graph---here the new algorithm runs in literally the smallest
        times that can be measured.  An error factor of $0.005$ seconds
        has been added to all measurements to compensate for cases where
        the time is measured to be zero.

        \item[--] Whilst the new algorithm ran to completion for all 500
        triangulations, the old algorithm did not.  Eight cases were
        terminated after 30 days of running time; these are the eight
        points at the rightmost end of the plot.  This
        early termination underestimates the improvements due to the new
        algorithm; the real improvements might well be orders of
        magnitude larger again.
    \end{itemize}

    \item In Figure~\ref{fig-largefactor} we plot the improvement factor
    (the old running time divided by the new running time)
    against both the input size and the output size (the size of the
    quadrilateral and standard solution sets respectively).

    \begin{figure}[htb]
    \centerline{
        \includegraphics[scale=0.55]{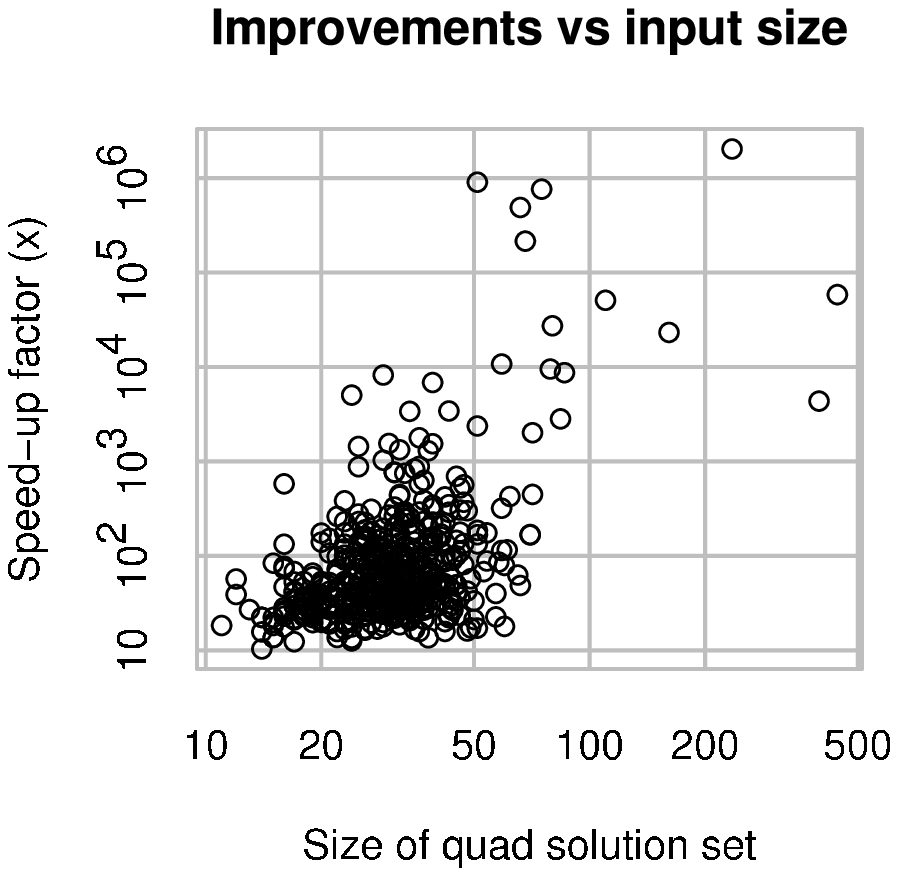}\quad
        \includegraphics[scale=0.55]{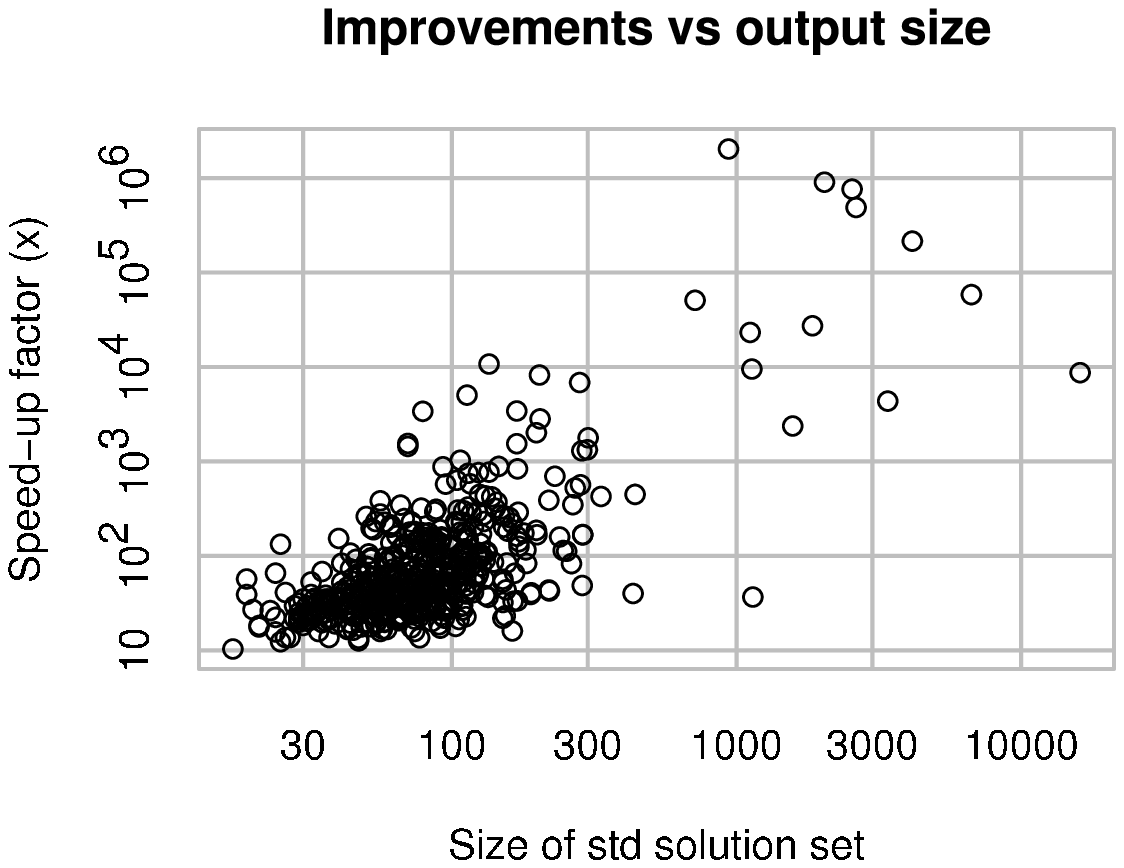}
    }
    \caption{Speed improvement factors for the new Algorithm~\ref{a-enumstd}}
    \label{fig-largefactor}
    \end{figure}

    One striking observation is how small the solution sets are, given
    that the triangulations range from $n=9$ to $n=25$ tetrahedra and
    that the sizes of the solution sets can grow exponentially in $n$.
    We examine this effect in greater detail in \cite{burton09-extreme}.

    If we focus on cases with unusually large input and output sets---those
    points that escape the dense clouds at the left of each plot---we
    find again that the improvements are particularly strong.  Amongst the
    triangulations with input size $>100$ the improvement factors range
    from over $4\,000$ to over $2\,000\,000$.  Likewise, with the
    exception of just one triangulation, those with output size $>500$
    have improvements ranging from over $2\,000$ to over $2\,000\,000$.
    The lone exception has output size $1\,141$ and an improvement
    factor of $37$.
\end{itemize}

Our final tests examine the feasibility of Conjecture~\ref{cj-complexity}.
Recall that this conjecture states that the running time for the
quadrilateral-to-standard solution set
conversion algorithm (Algorithm~\ref{a-quadtostd})
is at worst polynomial in the size of the output.  For this to occur we
must avoid the combinatorial explosion in the sizes of the intermediate
lists $L_0,L_1,\ldots,L_m$.

Figure~\ref{fig-explosion} measures the extent of this combinatorial
explosion.  Specifically, for each triangulation we measure the size of the
{\em maximal} list divided by the size of the {\em final} list---if
we have a combinatorial explosion we expect this ratio to be
very large, and if not then we expect it to remain close to one.
We then bin these measurements into small ranges and plot the resulting
frequencies in a histogram (so in each of the three plots, the sum of
the heights of the bars is always 500).
We take these measurements not only for Algorithm~\ref{a-quadtostd}
but also for the old direct enumeration algorithm
in both quadrilateral and standard coordinates.

\begin{figure}[htb]
\centerline{
    \includegraphics[scale=0.55]{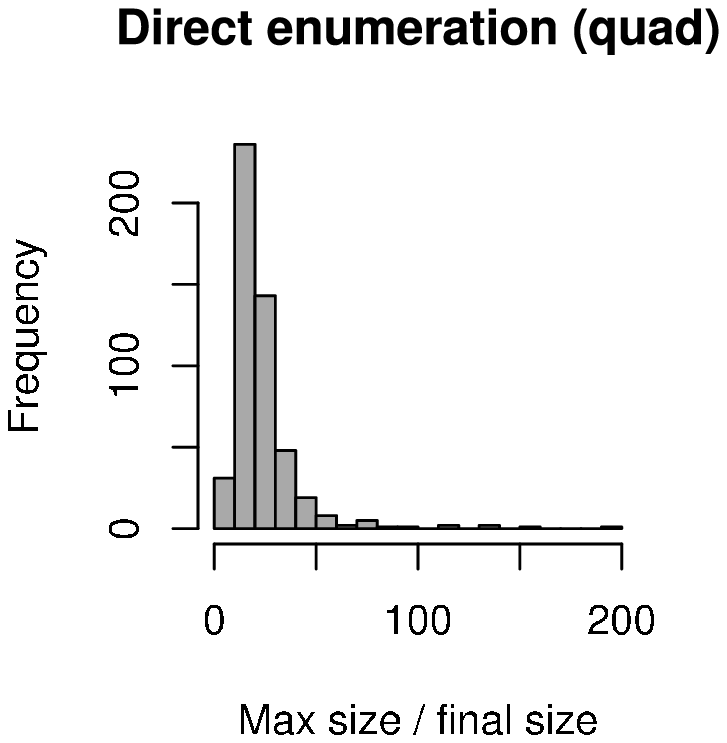}\quad
    \includegraphics[scale=0.55]{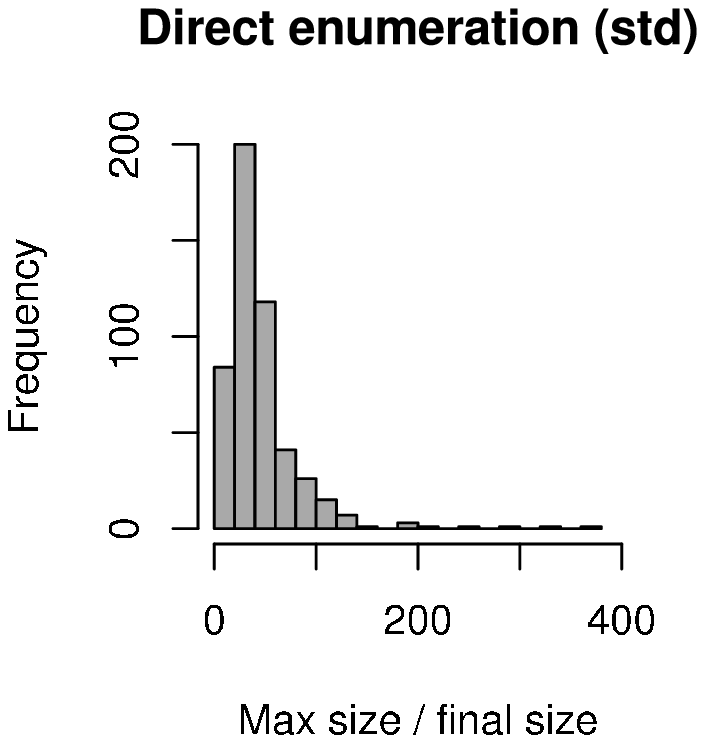}\quad
    \includegraphics[scale=0.55]{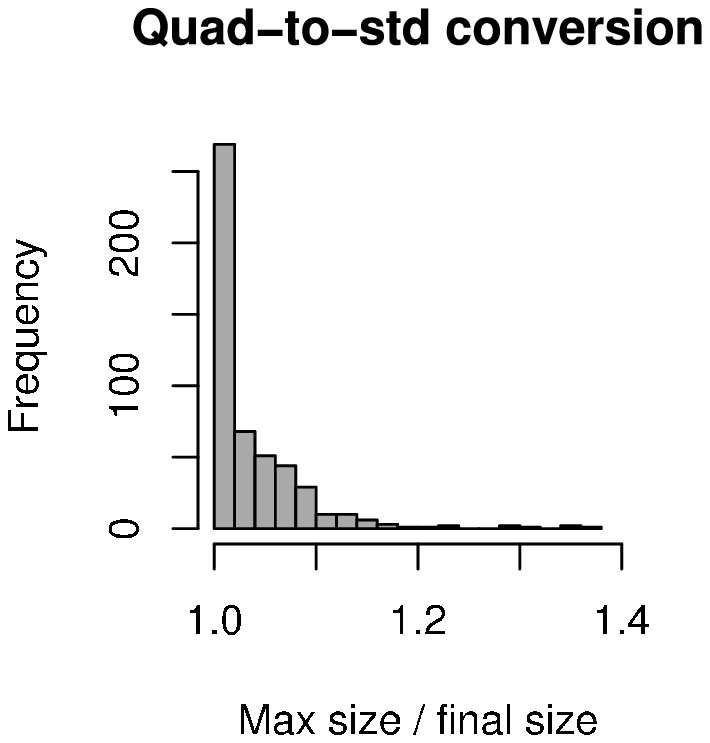}
}
\caption{The combinatorial explosion for enumeration and conversion
    algorithms}
\label{fig-explosion}
\end{figure}

What we see is exactly what we hope for.  With the old direct enumeration
algorithms, the maximal list can grow to hundreds of times the output
size (and perhaps larger, recalling that for the eight worst cases
the direct enumeration in standard coordinates was prematurely
terminated after 30~days).  For Algorithm~\ref{a-quadtostd} this ratio
is never greater than $\frac32$.  That is, the behaviour we see is
consistent with the intermediate lists being bounded by a {\em linear}
function of the output size.

Figure~\ref{fig-complexity} tests our conjecture
more directly by plotting the running time of
Algorithm~\ref{a-quadtostd} against the output size $k'$ (the size of
the standard solution set).  Once again, both axes use a log scale so
that the data points are more evenly distributed.

\begin{figure}[htb]
\centerline{\includegraphics[scale=0.55]{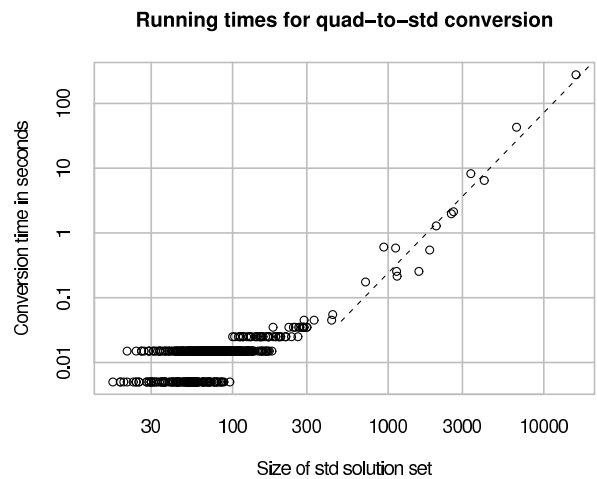}}
\caption{The running time for Algorithm~\ref{a-quadtostd}
    as a function of output size}
\label{fig-complexity}
\end{figure}

It is reasonable to ignore all points where the running time is under
$0.1$ seconds, since the clock resolution is only $0.01$ seconds (once
again we see horizontal bands of points where the running times are the
smallest that can be measured).  Not only does the clock resolution
introduce large relative errors for these points, but they are also highly
susceptible to what would otherwise be negligible tasks, such as
initialising data structures at the beginning of the algorithm, or
extracting algebraic information from the triangulation.

Focusing our attention therefore on the points with time $>0.1\,\mathrm{s}$
(or equivalently, with output size $>500$), we find that the points
follow what appears to be a straight line.  If $t$ is the running time,
this corresponds to an equation of the form
$\log t = \alpha \log k' + \beta$, or
equivalently, $t \propto k'^\alpha$.  That is, the time does indeed
appear polynomial in the output size $k'$.

We can measure the degree of this polynomial by performing a linear
regression.  This regression is indicated by the dashed line in
Figure~\ref{fig-complexity}; its equation is approximately
\[ \log t = 2.4729 \times \log k' - 18.5016 .\]
That is, the running time appears to be a little under
$t \propto k'^{2.5}$.  The adjusted correlation coefficient for
this regression is $r \simeq 0.96$, indicating an extremely good linear fit.

Note that $t \propto k'^{2.5}$ is quite reasonable, given
the structure of Algorithm~\ref{a-quadtostd}.  If we assume that
each list $L_i$ has size $O(k')$, then each inductive step $L_i \to L_{i+1}$
involves at least $O(k'^2)$ iterations through the innermost loop
(running through all $\mathbf{u} \in S_+$ and $\mathbf{w} \in S_-$).
This inner loop can in turn take $O(k')$ time as it tests for adjacency
by searching for an appropriate $\mathbf{z} \in L_r$; however,
Fukuda and Prodon \cite{fukuda96-doubledesc} note that such searches
often terminate early, and our additional test on the quadrilateral
constraints means that many such searches can be avoided entirely.
We therefore expect an average running time of between $O(k'^2)$
and $O(k'^3)$, which is precisely what we see.

One might observe that we have neglected the number of tetrahedra $n$
entirely in this empirical discussion of Conjecture~\ref{cj-complexity}.
Of course $n$ features implicitly in the size of the output, since
each vector in the standard solution set has dimension $7n$.
We focus on $k'$ here because it spans several orders of magnitude,
ranging from 17 to 16\,106; in contrast, $n$ merely ranges from
9 to 25.  Since the size of the standard solution set can grow
exponentially in $n$ (and this is also found to be true in the average
case \cite{burton09-extreme}), it is reasonable to expect $k'$ to
become the dominating factor in the running time.

\appendix

\section*{Appendix: Notation}

Throughout this paper we introduce a number of symbols that are used in the
statements and proofs of results.  For convenience, the following tables
list the key symbols and where they are defined.

\subsubsection*{Sets and Vector Spaces:}

\centerline{\begin{tabular}{l|l|l}
    \em Symbol & \em Meaning & \em Point of definition \\
    \hline
    $O^d$ & Non-negative orthant & Definition~\ref{d-soln-space} \\
    $J^d$ & Projective hyperplane & \\
    $\stdproj$ & Standard projective solution space & \\
    $\quadproj$ & Quadrilateral projective solution space & \\
    \hline
    $\surfaces$ & All embedded normal surfaces & Notation~\ref{notn-spaces} \\
    $\surfacescan$ & All canonical embedded normal surfaces \\
    $\vspacea$, $\qspacea$ & Admissible vectors in
        $\R^{7n}$ or $\R^{3n}$ \\
    $\vspaceza$, $\qspaceza$ & Admissible integer vectors in
        $\Z^{7n}$ or $\Z^{3n}$ & \\
    $\vspaceac$, $\vspacezac$ & Admissible canonical vectors in
        $\R^{7n}$ or $\Z^{7n}$ & \\
    \hline
    $\ainv{r}$, $\binv{C}$ & Used for loop invariants in
        Algorithm~\ref{a-quadtostd} &
        Equations~(\ref{eqn-a-defn}) and~(\ref{eqn-b-defn})
\end{tabular}}

\subsubsection*{Maps:}

\centerline{\begin{tabular}{l|l|l}
    \em Symbol & \em Meaning & \em Point of definition \\
    \hline
    $\ell(\cdot)$ & Vertex link (surface or vector) & Definition~\ref{d-link} \\
    $\vrep{\cdot}$, $\qrep{\cdot}$ &
        Vector representation & Definition~\ref{d-vecrep} \\
    $\proj{\,\cdot\,}$, $\vproj{\cdot}$, $\qproj{\cdot}$ &
        Projective image & Definition~\ref{d-projimage} \\
    $\sigma_v(\cdot)$, $\sigma_q(\cdot)$ &
        Represented surface & Definition~\ref{d-represented} \\
    $\kappa_s(\cdot)$, $\kappa_v(\cdot)$ &
        Canonical part (surface or vector) &
        Definition~\ref{d-canonical-part} \\
    $\qmap{\cdot}$ & Quadrilateral projection & Definition~\ref{d-proj-ext} \\
    $\vmap{\cdot}$ & Canonical extension & Definition~\ref{d-proj-ext} \\
    $\kappa_v^{(i)}(\cdot)$ & Partial canonical part &
        Definition~\ref{d-partial-canonical} \\
    $\tau_i(\cdot)$ & Truncation & Definition~\ref{d-truncation} \\
    $\adm{\cdot}$ & Semi-admissible part & Definition~\ref{d-semi-adm}
\end{tabular}}

%
%
%
\bibliographystyle{gtart}
\bibliography{paper}

\end{document}